\documentclass[12pt]{amsart}

\usepackage{xcolor}
\usepackage[normalem]{ulem} %\sout{...}  to strike out sentences
\RequirePackage[all]{xy}

\newtheorem{thm}{Theorem}[section]
\newtheorem{cor}[thm]{Corollary}
\newtheorem{lem}[thm]{Lemma}
\newtheorem{prop}[thm]{Proposition}
\theoremstyle{definition}
\newtheorem{defn}[thm]{Definition}
\newtheorem{rem}[thm]{Remark}

\newtheorem{ex}[thm]{Example}
\numberwithin{equation}{section}

\theoremstyle{plain}

\usepackage{amsmath}
\usepackage{amsfonts}
\usepackage{amssymb}
\usepackage{color}
\usepackage{mathtools}
\usepackage{bbm}
\newcommand{\be}{\begin{equation}}
	\newcommand{\en}{\end{equation}}
\newcommand{\Lc}{{\mc L}}
\newcommand{\bei}{\begin{itemize}}
	\newcommand{\eni}{\end{itemize}}
\newcommand{\ip}[2]{\langle{#1}|{#2}\rangle}

\newcommand{\C}{\mathfrak{C}}

\numberwithin{equation}{section}

\usepackage{mathtools}

\DeclarePairedDelimiter{\tn}{|||}{|||}

\newcommand{\mb}{\mathbb}
\newcommand{\mc}{\mathcal}
\newcommand{\eul}{\mathfrak}
\newcommand{\A}{\eul A}
\newcommand{\Ao}{{\eul A}_{0}}
\newcommand{\D}{\mc D}
\newcommand{\K}{\mc K}

\newcommand{\IA}{{\mathcal I}_{\Ao}^{\,\YY}(\A)}

\newcommand{\id}{{\sf e }}

\newcommand{\YY}{\mathfrak Y}

\newcommand{\KK}{\mathfrak K}

\newcommand{\B}{{\eul B}}
\newcommand{\M}{\mathfrak M}

\newcommand{\ad}{^{\mbox{\scriptsize $\dag$}}}

\newcommand{\ppi}{\Pi}
\newcommand{\X}{{\mathfrak X}}

\newcommand{\mult}{\,{\scriptstyle \square}\,}

\newcommand{\LDK}{{\mathcal L}\ad(\D,\mathcal{K})}
\newcommand{\LDH}{{\mathcal L}\ad(\D,\H)}
\newcommand{\idop}{{\mb I}}

\newcommand{\LDb}{{\mc L}\ad(\D)_b}

\numberwithin{equation}{section}

\def\H{\mc H}
\def\x{\relax\ifmmode {\mbox{*}}\else*\fi}

\newcommand{\up}{\raisebox{0.7mm}{$\upharpoonright$}}

\newcommand{\vertiii}[1]{{\left\vert\kern-0.25ex\left\vert\kern-0.25ex\left|#1 
					\right\vert\kern-0.25ex\right\vert\kern-0.25ex\right\vert}}
\definecolor{Magenta}{rgb}{1,0,1}

\newcommand{\BibTeX}{B\kern-0.1emi\kern-0.017emb\kern-0.15em\TeX}
\newcommand{\XYpic}{$\mathrm{X\kern-0.3em\raisebox{-0.18em}{Y}}$-$\mathrm{pic}\,$}

\newcommand{\ed}{\end{document}}
\begin{document}

%-------------------------------------------------------------------------
% editorial commands: to be inserted by the editorial office
%
%\firstpage{1} \volume{228} \Copyrightyear{2004} \DOI{003-0001}
%
%
%\seriesextra{Just an add-on}
%\seriesextraline{This is the Concrete Title of this Book\br H.E. R and S.T.C. W, Eds.}
%
% for journals:
%
%\firstpage{1}
%\issuenumber{1}
%\Volumeandyear{1 (2004)}
%\Copyrightyear{2004}
%\DOI{003-xxxx-y}
%\Signet
%\commby{inhouse}
%\submitted{March 14, 2003}
%\received{March 16, 2000}
%\revised{June 1, 2000}
%\accepted{July 22, 2000}
%
%
%
%---------------------------------------------------------------------------
%Insert here the title, affiliations and abstract:
%

	\title[]{Cauchy-Schwarz inequalities for maps in non commutative $L^p$-spaces.}

%----------Author 1
	\author[G. ~Bellomonte]{Giorgia Bellomonte}

%\author[Birkh\"auser \textit{et al.}]%
%{Birkh\"{a}user Publishing Ltd.}
%
\address{Dipartimento di Matematica e Informatica, Universit\`a degli Studi  di Palermo, Via Archirafi n. 34,  I-90123 Palermo, Italy}
\email{giorgia.bellomonte@unipa.it} 

%
%\thanks{This file has been typeset with the option \texttt{draft} to illustrate that feature and its purpose.}

%----------Author 2
\author[S. ~Ivkovi\'{c}]{Stefan Ivkovi\'{c}}

\address{Mathematical Institute of the Serbian Academy of Sciences and Arts, Kneza Mihaila 36, 11000 Beograd, Serbia}
\email{stefan.iv10@outlook.com}
%----------Author 3

\author[C. ~Trapani]{Camillo Trapani}
%\author[]{Rafa\l \ Ab\l amowicz}
\address{Dipartimento di Matematica e Informatica, Universit\`a degli Studi  di Palermo, Via Archirafi n. 34,  I-90123 Palermo, Italy}
\email{camillo.trapani@unipa.it} 

%----------classification, keywords, date
\subjclass{Primary 46K10, 47A07, 16C10. }
\keywords{Cauchy-Schwarz inequality, noncommutative $L^p$-spaces, positive maps, uncertainty relations, von Neumann algebras, numerical radius norm, quasi *-algebras, cyclic *-representations
}
\date{\today}
%----------additions
%\dedicatory{Last Revised:\\ \today}
%%% ----------------------------------------------------------------------
\begin{abstract} 
In this paper, a generalized Cauchy-Schwarz inequalities for positive sesquilinear maps with values in noncommutative $L^p$-spaces for $p>1$ is obtained. Bound estimates for their real and imaginary parts are also provided and, as an application, a generalization of the uncertainty relation in the context of noncommutative $L^2$-spaces is given. Next, a Cauchy-Schwarz inequality for positive sesquilinear maps with values in the space of bounded linear operators from a von Neumann algebra into a $C^*$-algebra equipped with the numerical radius norm is proved. In the same spirit, a new norm on a noncommutative $L^2$-space which generalizes the classical numerical radius norm of bounded linear operators on a Hilbert space is proposed and a Cauchy-Schwarz inequality for positive sesquilinear maps with values in the space of bounded linear operators from a von-Neumann algebra into the noncommutative $L^2$-space equipped with this new norm is proved.These results are used to get representations of general positive linear maps with values into a non-commutative $L^p$-space  and into certain operator spaces  in several different situations. Some concrete examples are also given. 
% These results are used to get representations of general positive linear maps from a unital *-algebra into a non-commutative $L^p$-space, from a unital *-algebra into the space of bounded linear operators on a von-Neumann algebra, and from a unital *-algebra into the space of bounded linear operators from a von-Neumann algebra into a noncommutative $L^2$-space equipped with its standard 2-norm. Some concrete examples are also given.
\end{abstract}
\label{page:firstblob}
%%% ----------------------------------------------------------------------
\maketitle
%%% ----------------------------------------------------------------------
%\tableofcontents
 \section{Introduction} 
 The main topic of the present chapter is the generalized Cauchy-Schwarz inequality for various classes of positive maps (which are not studied in our previously published papers \cite{BDI, BIvT2, BIvT1}) . The basic motivation for this study relies in the representation theory of a variety of structures, such as algebras, modules, etc.  Indeed, it is very well-known the key role that the Cauchy-Schwarz inequality plays in the ordinary representation theory of *-algebras or partial *-algebras, as basic ingredient for the Gelfand-Naimark-Segal representation constructed from a positive linear functional or from a sesquilinear form with values in the complex field. Replacing these latter with positive linear or sesquilinear maps makes sense in view of the construction of more general representations.

 These considerations seem to be one of the main reasons for which, since the famous Kadison-Schwarz inequality for positive maps between $C^*$-algebras \cite{Kadison0}, several variants of the Cauchy-Schwarz inequality for operator valued maps have been studied; see \cite{BIvT2, Bhatia_Davis, CKK, Fujimoto_Seo, Janssens, Jocic, JKL, JL, JL2, KS, Zamani} . Moreover, other inequalities for positive maps between $C^*$-algebras have been proved, as, for instance, in \cite{DM}. In a recent paper \cite{BIvT2} we have proved that some kind of Cauchy-Schwarz-like inequalities hold for positive maps taking values in certain ordered Banach bimodules over *-algebras, in particular, in noncommutative $L^1$-spaces. In the present chapter we obtain Cauchy-Schwarz like inequalities for positive maps on a von Neumann algebra with values in the non commutative $L^p$-spaces, with $p>1$. The approach adopted in \cite{BIvT2} is not applicable if $p>1$. For this reason we develop here some different techniques. The result is a generalized Cauchy-Schwarz inequality where a factor $2$ appears in the right hand side (Proposition \ref{prop:Lp-CS}). However, as proved in Proposition \ref{prop: 3.7}, if $\Phi$ is a positive sesquilinear map from a vector space $\X$ into the noncommutative $L^p$-space, with $p>1$, the {\em proper} Cauchy-Schwarz inequality is satisfied for couples of elements $x,y \in \X$ for which $\Phi(x,y)$ is a normal operator. Proposition \ref{prop: 3.7} and Corollary \ref{cor_3.8} find their motivation in the Kadison-Schwarz inequality for normal elements (see, Remark \ref{rem_3.9}). Furthermore, in Proposition \ref{prop: estimates} we give norm-estimates and bounds for the real and imaginary parts of a positive sesquilinear map with values in the noncommutative $L^2$-space.These bound estimates allow to obtain a generalization of {\em uncertainty relations} in this framework (Proposition \ref{prop: 3.4} and Proposition \ref{prop: 3.7}). In Remark \ref{rem_3.5} a possible physical interpretation of this result is proposed. 
 In Proposition \ref{prop_HilbMod} the Cauchy-Schwarz inequality for positive sesquilinear maps with values in a particular class of ordered Banach bimodules over *-algebras is proved. This class of bimodules includes, as discussed in Example \ref{ex: 3.9}, the duals  of unital $C^*$-algebras.

 Operator inequalities involving the numerical radius norm are an active international research field (see  \cite{BDMP}, and references therein). Motivated by this fact in Section 4 we envisage positive sesquilinear maps taking their values in the space of bounded linear operators acting from a von Neumann algebra into the space of bounded linear operators in Hilbert space equipped with the numerical radius norm and we prove, in Proposition \ref{prop: num rad norm-CS}, the Cauchy-Schwarz inequality (in operator norm). As noted in Remark \ref{rem_4.2} below, this inequality allows us to construct representations of general positive linear maps from a unital *-algebra into the operator algebra of bounded linear operators on a von Neumann algebra.

 Furthermore, we introduce a new norm on the noncommutative $L^2$-space, which generalizes the classical numerical radius norm of the space of bounded linear operators in Hilbert space (see subsection 4.2 and Remark \ref{rem_4.6}) and in Lemma \ref{lem: Lemma 3.1} we show that this is a well-defined norm. Then in Corollary \ref{cor: 4.7} we prove that every positive linear map with values in a noncommutative $L^2$-space satisfies the Cauchy-Schwarz inequality with respect to this new norm. Finally, we consider the space of bounded linear operators from a von Neumann algebra into a noncommutative $L^2$-space equipped with this new norm and we prove that every positive sesquilinear map with values in this operator space satisfies the Cauchy-Schwarz inequality in the operator norm. This result can be used to represent, in a Banach space, positive sesquilinear maps from a unital *-algebra into the space of bounded linear operators from a von Neumann algebra into the noncommutative $L^2$-space, with its standard norm (Remark \ref{rem_4.11}).

 The last section is devoted to applications. By applying the Cauchy-Schwarz inequalities obtained in the previous sections and repeating the procedure of generalized GNS-construction from \cite{BDI, BIvT2, BIvT1}, we obtain  representations in (quasi) Banach spaces of the general positive maps that we considered in the previous sections. Positive maps play in general an important role in quantum physics, in quantum information theory (see \cite{CDPR, DKM, M} and references therein)  and also in operator theory and linear dynamic, see \cite{Ivkovic2026,Petersson,stormer2}, and this was our main motivation for studying representations of these maps. 
 Finally, we notice that the 
Cauchy-Schwarz inequalities  for general positive
sesquilinear maps with values in ordered Banach bimodules or in  operator spaces play an important role for generating  representations and, moreover, produce new related
inequalities for certain classes of completely positive maps, see the discussion in \cite[Section 4]{BIvT2}. At the end of the chapter, we also discuss some concrete examples.
\section{Notation and preliminary results}

    Throughout the paper we will denote by $\B(\mathcal{X},\mathcal{Y})$ the space of bounded linear maps from the normed space $\mathcal{X}$ into the normed space $\mathcal{Y}$. If $\mathcal{X}=\mathcal{Y}$ we will simply write $\B(\mathcal{X})=\B(\mathcal{X},\mathcal{X})$. 
Let $\H$ be a Hilbert space.  If $T\in \B(\H)$ we will  denote by $T^*$ its adjoint, by $N(T)$ and $R(T)$ its null space and the range of $T$, respectively, and by $\|T\|$ its norm. 

% {\color{red}An {\em (orthogonal) projection} is an operator $P\in\B(\H)$ such that $P=P^*=P^2$. A projection $P\in\B(\H)$ is said to be {\em minimal} if whenever $Q\in\B(\H)$ is a projection such that $0\leq Q\leq P$, then either $Q=0$ or $Q=P$.
% A {\em partial isometry} is an operator $A\in\B(\H)$ that maps one (closed) subspace $\D$ of $\H$ (called {\em initial space} of the partial isometry) isometrically onto another one $A(\D)$ (called {\em final space} of the partial isometry) and is such that $\D^\perp\subseteq N(A)$.}  

Let $\D\subset\H$ a dense subspace of $\H$ and a closed operator $A:\D\to\H$, then there exists a partial isometry $Z$ with initial space the closure $\overline{  R(A^*)}$  and final space $R(A)$
  \cite[Vol. II, Ch. 6, Th. 6.1.2]{Kadison} such that $$A=Z(A^*A)^{1/2}=(AA^*)^{1/2}Z$$ this is called the {\em polar decomposition of} $A$. Indeed, if $A=BC$ with $B$ a partial isometry with initial space $R(C)$ and $C$ a positive operator, then $B=Z$ and $C=(A^*A)^{1/2}$.  If $N(A)=N(A^*)=\{0\}$, then $Z$ is a unitary operator.\\
  % {\color{red}It must be observed that even though both $(A^*A)^{1/2}$ and $(AA^*)^{1/2}$ belong to the same $C^*$-algebra containing $A$, the operator $Z$ may not. However, if $A$ belongs to a von Neumann algebra $\M$ and $UV$ is the polar decomposition of $A$, then both $U$ and $V$ are in $\M$. If the operator $A$ is normal, then $V=(A^*A)^{1/2}=(AA^*)^{1/2}$, hence $UV=T=VU$.}\\
  
%   We remind that a \emph{von Neumann algebra} on a Hilbert space $\H$ is a $*$-subalgebra $\M \subseteq \B(\H)$ which contains the identity operator $I$ and \[ \M = \M'', \] where $\M''=(\M')'\supseteq\M$ and $$\M'=\{X\in\B(\H): AX=XA,\,\, \forall A\in\M\}$$ denotes {\em the commutant of $\M$ in $\B(\H)$}.  A (possibly unbounded) operator $A$ is called {\em affiliated with
% a von Neumann algebra $\M$} % (and we will write $A\, \eta\, \M$,)
% if $A$ is closed,
% densely defined and $UA\subseteq AU $ for every unitary operator $U \in\M'$.\\

A {\em trace} on a von Neumann algebra $\M$ on the Hilbert space $\H$ \cite[Chapter 8]{KadisonII} is a linear  function  $\rho:\M^+\to [0,+\infty]$  with $\M^+$ the positive cone in $\M$ ($\rho(X+Y)=\rho(X)+\rho(Y)$ and $\rho(\lambda X)$, $\forall X,Y\in\M^+$, $\forall \lambda\geq0$) such that  $$\rho(X^*X)=\rho(XX^*), \quad \forall X\in\M\,\,\mbox{ (tracial property)}$$ (with the usual convention $0\cdot\infty=\infty$). A trace $\rho$ is
said to be
\begin{itemize}
    \item[] \hspace{-3em} {\em faithful} if $\rho(X)>0$ for any non zero $X\in\M^+,$
  \item[] \hspace{-3em}   {\em semifinite} if for every non zero $X\in\M^+$ there exists some non zero $Y\in\M^+$ with $\rho(Y)<+\infty,$ such that $0\leq Y\leq X$, 
 \item[]  \hspace{-3em}  {\em normal} if $\rho (\sup_i
X_i)= \sup_i\, \rho(X_i)$ for every bounded increasing net $\{X_i\}\subseteq\M^+,$ 
\item[]  \hspace{-3em}  {\em finite} if $\rho(I)<\infty$. 
\end{itemize}

If $\rho$ is normal and finite then $\rho(P)<\infty$ for all projections $P\in\M$. If the centre $\M\cap\M'$ of $\M$ consists of scalar (complex) multiples of the identity $I$, then $\M$ is said a {\em factor}, hence all its central projections are trivial and it cannot be decomposed into a direct sum of smaller von Neumann algebras. The Murray--von Neumann classification divides factors into the following types: factors of {\em type I} are those isomorphic to $\B(\K)$ for some Hilbert space $\K$, they contain minimal projections, factor of {\em type II} are those  with no minimal projections but admitting a faithful normal semifinite trace and factors of {\em type III} those with no nonzero semifinite trace. \\

Let $\M$ be a von Neumann algebra on a Hilbert space $\H$ equipped with a  faithful normal semifinite trace $\rho$ defined on $\M^+$. For every $p\geq1$ we denote by $L^p(\rho)$  the Banach space completion of the *-ideal of $\M$ (see \cite{Nelson1, Nelson2}):
$$\mathcal{J}_p=\{X\in\M: \rho(|X|^p)<\infty\}$$ (where, as usual, $|X|=(X^*X)^{1/2}$) with respect to the norm $$\|X\|_p=\rho(|X|^p)^{1/p},\quad X\in\mathcal{J}_p.$$ We maintain the notation $\rho$ for the natural extension of $\rho$ to $L^p(\rho)$. If $p=\infty$, one defines $L^\infty(\rho)=\M$ and  $\|\cdot\|_\infty=\|\cdot\|$ (the operator norm in $\B(\H)$);then if $\rho$ is finite  it is $L^\infty(\rho)\subset L^p(\rho)$ for every $p\geq 1$.
For $1\le p<\infty$ we denote by  $L^{p}(\rho)$ the noncommutative $L^{p}$-space associated with $\rho$ on $\M$ and consists of all (possibly unbounded) operators $X$ affiliated to $\M$ such that $\|X\|_p<\infty$. If $1<p<\infty$ we write $q$ for the conjugate exponent, $1/p+1/q=1$.\\

Throughout this paper will often use  the noncommutative Hölder’s inequality (see, e.g., \cite[Section 3]{Nelson2}): let 
$\M$ be a von Neumann algebra equipped with a normal, faithful, semifinite trace $\rho$, if $A\in L^p(\rho)$ and $B\in L^q(\rho)$ with $1/p+1/q=1$, then $AB\in L^1(\rho)$ and $$\rho(|AB|)\leq\|A\|_p\|B\|_q.$$ 
%  {\gbc In \cite[Vol II, Ch. 6 Ex. 6.9.12]{Kadison}  if $\M$ is a  von Neumann algebra on a Hilbert space $\H$ equipped with a  normal faithful semifinite trace $\rho$ on $\M_+$, then there is an orthogonal family $\{Q_j :
% j \in J \}$ of nonzero central projections in $\M$ (i.e., for every $j\in J$, $Q_j\neq0$, $Q_j=Q_j^*=Q_j^2$ and $Q_jA=AQ_j$
%  for all operators $A\in\M$), such that $\sum_{
% j\in J} Q_j = I$ and every $Q_j$
% is the sum of an orthogonal family of mutually equivalent finite projections in $\M$.}\\

If $\X$ is a complex vector space, a map $\Phi:\X\times \X\to \mathcal{B}$ (where $\mathcal{B}$ is either $\B(\H)$ or $L^{p}(\rho)$) is called \emph{sesquilinear} if it is linear in the first variable and conjugate-linear in the second. It is \emph{positive} if $\Phi(x,x)\ge 0$ (as an operator or as an element of $L^{p}(\rho)$) for every $x\in \X$.

We will use the duality pairing between $L^{p}(\rho)$ and $L^{q}(\rho)$ (the space of all those operators $B$ in -or affiliated with- $\M$ for which the functional \( A \mapsto \rho(AB) \) is bounded on $L^{p}(\rho)$) given by

\[
\langle A,B\rangle = \rho(AB),\qquad A\in L^{p}(\rho),\ B\in L^{q}(\rho).
\]

Recall the duality formula

\[
\|A\|_{p} = \sup\{\, |\rho(AB)| : B\in L^{q}(\rho),\ \|B\|_{q}\le 1\,\}.
\]
\smallskip

\begin{rem}
    By Hölder's non-commutative inequality and \cite[Corollary 3.4.6]{Dodds} it follows that $$|\rho(AB)|\leq\|AB\|_1\leq\|A\|_p\|B\|_q,\quad\forall A\in L^p(\rho),\forall B\in L^q(\rho).$$ Hence, if $A_n\to A$ in  $L^p(\rho)$ and  $B_n\to B$ in  $L^q(\rho)$ as $n\to \infty$, then $$\rho(A_nB_n)\to\rho(AB),\quad \mbox{ as }n\to\infty.$$ We will use this fact frequently in our proofs.
\end{rem}
 % \section{An inequality for $L^{2}$-valued positive forms  (from CUTS)}\label{eq: case p=2} In this section, a generalized Cauchy-Schwarz inequality  for general positive sesquilinear map into $L^2(\rho)$ is proved. With this inequality at hand we can represent such positive $L^2(\rho)$-valued maps in a quasi-Banach-space.

 %%%%%%%%%%%%%%%%%%%%%%CUTS

\begin{lem}\cite[Lemma 4.8]{BDI}\label{projectors} Let $\M$ be a von Neumann algebra which is a factor of type I or II, and $\rho$ be a semifinite trace on $\M$. Let  $ W\in\M$ such that $W\geq0$ and $W\in L^p(\rho)$. Then there exists a sequence $\{P_n\}_n$ of finite projections in $\M$ such that 
$$\lim_{n\to\infty}\|W(I-P_n)\|_p=0.$$
\end{lem}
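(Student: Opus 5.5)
The plan is to use spectral projections of $W$ itself, cut off near zero. Then both finiteness of the projections and the convergence $\|W(I-P_n)\|_p\to 0$ come out of the functional calculus and normality of the trace. I do not expect to need the factor hypothesis in an essential way. Faithfulness and semifiniteness of $\rho$, as in the standing assumptions for $L^p(\rho)$, should suffice, the factor assumption only guaranteeing that "finite projection" is the usual notion.

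Let $W=\int_{[0,\|W\|]}\lambda\,dE(\lambda)$ be the spectral decomposition of $W$; all its spectral projections lie in $\M$. Set
\[
P_n=E\bigl((1/n,\|W\|]\bigr)=\chi_{(1/n,\infty)}(W)\in\M .
\]
First, $P_n$ has finite trace. Indeed $W^p\ge n^{-p}P_n$, so by positivity of $\rho$ we get $\rho(P_n)\le n^p\rho(W^p)=n^p\|W\|_p^p<\infty$. A projection of finite trace under a faithful trace is finite in the Murray--von Neumann sense. If $P_n\sim Q\le P_n$, then $\rho(Q)=\rho(P_n)$ by the tracial property $\rho(V^*V)=\rho(VV^*)$. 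Hence $\rho(P_n-Q)=0$, and faithfulness gives $Q=P_n$.

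Second, I would prove the convergence. Since $W$ commutes with $P_n$, we have $W(I-P_n)=W\chi_{[0,1/n]}(W)=f_n(W)$ with $f_n(\lambda)=\lambda\chi_{[0,1/n]}(\lambda)\ge0$. Therefore
\[
\|W(I-P_n)\|_p^p=\rho\bigl(W^p\chi_{[0,1/n]}(W)\bigr)=\rho\bigl(W^{p}\chi_{(0,1/n]}(W)\bigr),
\]
where the point $0$ contributes nothing. The operators $W^p\chi_{(0,1/n]}(W)$ decrease to $0$ strongly. Moreover, $W^p-W^p\chi_{(0,1/n]}(W)=W^p\chi_{(1/n,\infty)}(W)$ increases to $W^p\chi_{(0,\infty)}(W)=W^p$.

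The main obstacle is this limit: normality is stated only for increasing nets, so I pass to the complements. Normality gives $\rho(W^pP_n)\uparrow\rho(W^p)$. Since $\rho(W^p)<\infty$, additivity on $\M^+$ yields
\[
\rho\bigl(W^p(I-P_n)\bigr)=\rho(W^p)-\rho(W^pP_n)\to0 .
\]
Hence $\|W(I-P_n)\|_p\to0$, which is the statement of the lemma.
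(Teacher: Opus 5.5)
Your argument is correct and takes essentially the same approach as the paper, which does not reprove the lemma but records in Remark~\ref{rem: proof lemma 4.8} that the projections in the cited proof from \cite{BDI} are exactly the spectral cut-offs $P_n=E_W(1/n,\infty)$, i.e.\ your choice. Your finiteness bound $\rho(P_n)\le n^p\rho(W^p)$ and the passage to complements via normality together with $\rho(W^p)<\infty$ are both sound, and you are right that the factor hypothesis plays no essential role (faithfulness and normality of $\rho$ suffice).
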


\begin{rem}\label{rem: proof lemma 4.8}
    By the proof of \cite[Lemma 4.8]{BDI}, each $P_n= E_W(1/n, \infty)$, where $E_W$ is the spectral measure corresponding to $W\in\M$.
\end{rem}

\begin{lem}\label{lem: rho}
    Let $\rho$ be a faithful, semi-finite normal trace on a von Neumann algebra $\M$ on the Hilbert space $\H$. Let  $p>1$ and $q=\frac{p}{p-1}$. Then, \begin{itemize}
        \item if $A\in L^p(\rho)$ and  $B\in L^q(\rho)$, with $A,B\geq 0$, then $\rho(AB)\geq0$;
        \item if $A\in L^p(\rho)$ and  $B\in L^q(\rho)$, with $A=A^*$ and $B=B^*$, then $\rho(AB)\in \mathbb{R}$.
    \end{itemize} 
\end{lem}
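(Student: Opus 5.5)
The first item follows from the tracial property once the square roots of $A$ and $B$ are brought in. Write $A^{1/2}$ and $B^{1/2}$ for the positive square roots. These are (possibly unbounded) positive operators affiliated with $\M$, with $A^{1/2}\in L^{2p}(\rho)$ and $B^{1/2}\in L^{2q}(\rho)$, because $|A^{1/2}|^{2p}=A^p$ and $|B^{1/2}|^{2q}=B^q$. Since $\frac{1}{2p}+\frac{1}{2q}=\frac12$, the generalized Hölder inequality in the measure topology setting gives $C:=A^{1/2}B^{1/2}\in L^2(\rho)$. Moreover $AB=A^{1/2}\,(A^{1/2}B^{1/2})\,B^{1/2}$ as $\tau$-measurable operators.

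The key step is the cyclicity of the extended trace: $\rho(XY)=\rho(YX)$ whenever $X\in L^r(\rho)$ and $Y\in L^{r'}(\rho)$ with $1/r+1/r'=1$. I would invoke this for $X=A^{1/2}\in L^{2p}$ and $Y=A^{1/2}B\in L^{(2p)'}$; note that $\frac{1}{2p}+\frac1q=1-\frac{1}{2p}$. This yields
\[
\rho(AB)=\rho\bigl(A^{1/2}BA^{1/2}\bigr)=\rho\bigl((B^{1/2}A^{1/2})^*(B^{1/2}A^{1/2})\bigr)=\|B^{1/2}A^{1/2}\|_2^2\ge 0 .
\]
The operator $A^{1/2}BA^{1/2}$ is positive and lies in $L^1(\rho)$, and on positive elements of $L^1$ the extended trace coincides with the original normal trace, so it is nonnegative.

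If one prefers to avoid unbounded cyclicity, there is an approximation route. Replace $A$ and $B$ by spectral truncations $A_n=AE_A([0,n])$ and $B_n=BE_B([0,n])$, or by elements of $\mathcal J_p$, $\mathcal J_q$ obtained from density of the ideals. For these bounded elements the computation above is elementary, because $\rho(XY)=\rho(YX)$ holds for $X,Y$ in the trace-class ideal and positivity is immediate. Truncations of a positive operator stay positive, and they converge in $\|\cdot\|_p$ and $\|\cdot\|_q$ respectively by normality of $\rho$ (dominated convergence for $\rho(|A-A_n|^p)$). The Remark after the Hölder inequality then gives $\rho(A_nB_n)\to\rho(AB)$, and a limit of nonnegative reals is nonnegative.

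For the second item, decompose $A=A_+-A_-$ and $B=B_+-B_-$ via the spectral theorem. Then $A_\pm\ge0$ and $A_\pm\in L^p(\rho)$ since $|A_\pm|\le |A|$, and similarly $B_\pm\in L^q(\rho)$. Bilinearity gives $\rho(AB)$ as a real combination of four terms $\rho(A_\pm B_\pm)$, each of which is $\ge0$ by the first item, so $\rho(AB)\in\mathbb R$. An alternative is to note that $\rho(AB)$ and $\overline{\rho(AB)}=\rho((AB)^*)=\rho(BA)$ agree by cyclicity. I expect the main obstacle to be justifying cyclicity of the trace, or equivalently the convergence of the truncations, for unbounded affiliated operators. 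The approximation argument via the Remark handles this cleanly, provided one checks that the truncations of a positive element of $L^p$ converge in norm, which follows from normality and faithfulness of $\rho$.
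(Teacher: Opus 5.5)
Your proof is correct, but it argues differently from the paper. The paper first takes $B_+\in L^q(\rho)\cap L^\infty(\rho)$ and compresses with the finite spectral projections $P_n=E_{B_+}(1/n,\infty)$ from Lemma~\ref{projectors}. It then uses the Ogasawara--Yoshinaga commutation lemma to write $\rho(A_+B_+P_n)=\rho(P_nB_+^{1/2}P_nA_+P_nB_+^{1/2}P_n)\ge 0$. It passes to the limit twice via H\"older: first $n\to\infty$, then from bounded $B_+$ to general $B_+$. For the second item it decomposes bounded self-adjoint approximants and approximates once more.

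You instead invoke the general cyclicity $\rho(XY)=\rho(YX)$ for $X\in L^r(\rho)$, $Y\in L^{r'}(\rho)$, a standard theorem for measurable operators. This yields at once the identity $\rho(AB)=\|B^{1/2}A^{1/2}\|_2^2$. Since your first item already covers unbounded positive $A,B$, your second item needs no further approximation, only $A=A_+-A_-$, $B=B_+-B_-$ and bilinearity. Your route is shorter and gives an explicit formula. It also does not depend on Lemma~\ref{projectors}, which the paper states only for factors of type I or II, although Lemma~\ref{lem: rho} is asserted for arbitrary $\M$. The price is a heavier black box than the restricted commutation lemma the paper uses.

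One small correction to your fallback approximation. The truncations $AE_A([0,n])$ are bounded but in general not in the trace-class ideal when $\rho(I)=\infty$; for instance $A_n^{1/2}$ need not even lie in $\mathcal{J}_2$. So cyclicity for trace-class elements does not apply to them as you state it. Truncating also away from $0$, i.e.\ $A_n=AE_A([1/n,n])$, repairs this. By Chebyshev, $\rho(E_A([1/n,n]))\le n^p\rho(A^p)<\infty$, so $A_n^{1/2}\in\mathcal{J}_1$. Hence $\rho(A_nB_n)=\rho(A_n^{1/2}B_nA_n^{1/2})\ge 0$ by the elementary identity $\rho(XY)=\rho(YX)$ for $X\in\mathcal{J}_1$, $Y\in\M$, and still $A_n\to A$ in $L^p(\rho)$. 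This is exactly the role played by the projections $E(1/n,\infty)$ in the paper's argument.
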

\begin{proof}
    
% We first notice that if $A,B\in L^{2}(\rho)$ with $A=A^{*}$ and $B=B^{*}$, then $\rho(AB)\in \mathbb{R}$.  
% To prove this, it suffices to observe that $\rho(A_{+}B_{+})\ge 0$ whenever $A_{+}B_{+}\in L^{2}(\rho)$ with $A_{+},B_{+}\ge 0$.
Let first $A_+\in L^p(\rho)$ and  $B_+\in L^q(\rho)\cap\mathcal{L}^\infty(\rho)$ with $A_{+},B_{+}\ge 0$. If $P\in L^{\infty}(\rho)$ is any finite projection commuting with $B_{+}$, then by \cite[Lemma 3.1]{OgasYoshi} it is not hard to check that %by the same arguments used in the proof of Lemma \ref{lem: Lemma 3.1} 
%we have
\[
\rho\bigl(A_{+}B_{+}P\bigr)
= \rho(PB_{+}^{1/2}PA_{+}PB_{+}^{1/2}P) \ge 0.
\]

By Lemma \ref{projectors}, we can choose a sequence of finite spectral projections  $\{P_{n}\}$ corresponding to $B_{+}$ such that

\[
\|B_{+} - B_{+}P_{n}\|_{p} \to 0,\quad \mbox{ as }n\to \infty.
\]

Since $\rho(A_{+}B_{+}P_{n})\ge 0$ for all $n$, letting $n\to\infty$ and applying Hölder's inequality yields

\[
\rho(A_{+}B_{+}) \ge 0.
\]
Since this holds, for all $B_{+}\in L^\infty(\rho)\cap L^q(\rho)$, by passing to the limit, we deduce that it also holds for all $\widetilde{B}_+\in L^q(\rho)$, with $\widetilde{B}_+\geq 0$

Now, for arbitrary selfadjoint $A\in L^{p}(\rho)\cap L^\infty(\rho)$ and $B\in L^{q}(\rho)\cap L^\infty(\rho)$, with  $A=A^*$ and $B=B^*$, we can write
\[
A = A_{+} - A_{-}, \qquad B = B_{+} - B_{-},
\]
with $A_{\pm}\in L^{p}(\rho)\cap L^\infty(\rho)$ and $B_{\pm}\in L^{q}(\rho)\cap L^\infty(\rho)$
and $A_{\pm}, B_{\pm}\geq0$.  
Hence, by the above,
\[
\rho(AB)
= \rho(A_{+}B_{+}) - \rho(A_{+}B_{-}) - \rho(A_{-}B_{+}) + \rho(A_{-}B_{-}) \in \mathbb{R}.
\]

Finally, for general selfadjoint $A\in L^{p}(\rho)$ and $B\in L^{q}(\rho)$, choose sequences of selfadjoint elements $\{A_{n}\}\in L^{p}(\rho)\cap L^\infty(\rho)$ and $\{B_{n}\}\in L^{q}(\rho)\cap L^\infty(\rho)$,
\[
A_n\to A\quad \mbox{ in }L^p(\rho),\qquad \,B_n\to B\quad \mbox{ in }L^q(\rho),\quad \mbox{ as }n\to\infty.
\]

Since $\rho(A_{n}B_{n})\in\mathbb{R}$ for all $n$, letting $n\to\infty$ and applying Hölder's inequality gives $\rho(AB)\in\mathbb{R}$.
\end{proof}
 \bigskip
 
 A {\em quasi *-algebra} $(\A, \A_0)$ is a pair consisting of a vector space $\A$ and a *-algebra $\A_0$ contained in $\A$ as a subspace and such that
\begin{itemize}
	\item $\A$ carries an involution $a\mapsto a^*$ extending the involution of $\A_0$;
	\item $\A$ is  a bimodule over $\A_0$ and the module multiplications extend the multiplication of $\A_0$. In particular, the following associative laws hold:
	\begin{equation}\notag \label{eq_associativity}
		(ca)d = c(ad); \ \ a(cd)= (ac)d, \quad \forall \ a \in \A, \  c,d \in \A_0;
	\end{equation}
	\item $(ac)^*=c^*a^*$, for every $a \in \A$ and $c \in \A_0$.
\end{itemize}

The
\emph{identity} or {\it unit element} of $(\A, \A_0)$, if any, is a necessarily unique element $\id\in \A_0$, such that
$a\id=a=\id a$, for all $a \in \A$.

We will always suppose that
\begin{align*}
	&ac=0, \; \forall c\in \A_0 \Rightarrow a=0 \\
	&ac=0, \; \forall a\in \A \Rightarrow c=0. 
\end{align*}
Clearly, both these conditions are automatically satisfied if $(\A, \A_0)$ has an identity $\id$.\\

A  quasi *-algebra $(\A, \A_0)$ is said to be  {\em normed} if $\A$ is a normed space, with a norm $\|\cdot\|$ enjoying the following properties
\begin{itemize}
	\item there exists $\gamma>0$ such that for every $a\in\A$ $$ 
\max\{\|ac\|, \|ca\|\}\leq \gamma \|a\|, \quad\forall c\in \A_0;$$
	\item $\|a^*\|=\|a\|, \; \forall a \in \A;$
	\item  $\A_0$ is dense in $\A[\|\cdot\|]$.
\end{itemize}
If the normed vector space $\A[\|\cdot\|]$ is complete, then $(\A, \A_0)$ is called a  {\em Banach  quasi *-algebra}.
\begin{ex} If $\M$ is von Neumann algebra and $\rho$ is a normal faithful semifinite trace on $\M_+$, the couple $(L^p(\rho), L^p(\rho)\cap L^\infty(\rho))$ is a Banach quasi *-algebra, see \cite[Proposition 5.6.4]{FT_book}. \end{ex} For more details about quasi *-algebras and their properties, see e.g.  \cite{BTT2006, BT2011, BT2023, Frag2} and  the monograph \cite{FT_book}.
\begin{defn}
Let  $\YY$ be a Banach bimodule  over the  *-algebra $\YY_0$. %(with $\YY_0$ equipped with a  not necessarily sub-multiplicative norm). 
We say that $\YY$ is  an {\em ordered Banach bimodule} over $\YY_0$ if
\begin{itemize}
\item[(i)]  $\YY$ is ordered as a vector space; that is,
$\YY$ contains  a (positive) cone $\KK$, i.e., $\KK\subset \YY$ is such that $\KK+\KK\subset \KK$, $\lambda\KK \subset \KK $ for $\lambda\geq 0$ and $\KK\cap (-\KK)=\{0\}$; 
\item[(ii)] $z^* \KK z \subset \KK, \quad \forall z\in \YY_0.$
\end{itemize} 
\end{defn}

Let $\X$ be a vector space and $\YY$ an ordered Banach module over $\YY_0$ with positive cone $\KK$. Let $\Phi$ be a $\YY$-valued positive  sesquilinear  map on   $\X\times\X$  $$\Phi:(x_1,x_2)\in\X\times\X\to\Phi(x_1,x_2)\in\YY;$$ i.e., a map with the properties  \begin{itemize}
	\item[$i)$] $\Phi(x_1,x_1)\in \KK$,
	\item[$ii)$]$\Phi(\alpha x_1+\beta x_2,\gamma x_3)=\overline{\gamma}[\alpha\Phi( x_1,x_3)+\beta \Phi(x_2,x_3)]$,  
\end{itemize}
with $x_1, x_2,x_3 \in\X$ and $\alpha,\beta,\gamma\in\mathbb{C}$. \\
The $\YY$-valued positive  sesquilinear  map $\Phi$ is called {\em faithful} if $$\Phi(x,x)=0_\YY \;\Rightarrow\; x=0.$$

\begin{defn}\label{defn: $C^*$-valued quasi inner product}
Let $\X$ be a vector space and $\YY$ an ordered Banach bimodule over $\YY_0$. A $\YY$-valued   faithful positive  sesquilinear map $\Phi$ on   $\X\times\X$ is said to be a {\em $\YY$-valued quasi inner product} and we often  write $\ip{x_1}{x_2}_\Phi:=\Phi(x_1,x_2)$,  $x_1,x_2\in\X$. If $$\|\Phi(x,y)\|_\YY\le k\|\Phi(x,x)\|_\YY^{1/2}\,\|\Phi(y,y)\|_\YY^{1/2},\quad \forall x,y\in\X$$ for some $k>1$, $\Phi$ is said to satisfy a {\em generalized Cauchy-Schwarz inequality}. If $\Phi$ satisfies the proper Cauchy-Schwarz inequality, that is if
$$\|\Phi(x,y)\|_\YY\le \|\Phi(x,x)\|_\YY^{1/2}\,\|\Phi(y,y)\|_\YY^{1/2},\quad \forall x,y\in\X$$ then $\Phi$ will be called an {\em $\YY$-valued inner product}. 
\end{defn}

 If a $\YY$-valued quasi inner product $\Phi:\X\times\X\to \YY$ satisfies a generalized Cauchy-Schwarz inequality, then it   induces a quasi norm $\|\cdot\|_\Phi$ on $\X$:

\label{vp norm}  \begin{equation*}
\label{eq: norm phi def}
\|x\|_\Phi :=\sqrt{\|\ip{x}{x}_\Phi\|_\YY}=\sqrt{\|\Phi(x,x)\|_\YY},\quad x\in\X,\end{equation*} 
since (see \cite{BIvT1})
\begin{align*} 	
%& \| a\|_\Phi\geq0, \qquad \forall a\in\X \mbox{ and }\| a\|_\Phi=0\Leftrightarrow a=0,\nonumber\\
%& \|\alpha a\|_\Phi=|\alpha|\| a\|_\Phi, \qquad \forall \alpha\in\mathbb{C}, a\in\X,\nonumber\\ 
&   \|x_1+x_2\|_\Phi\leq\sqrt{k}(\| x_1\|_\Phi+\| x_2\|_\Phi), \qquad \forall x_1,x_2\in\X.%\label{eqn:Q3}
\end{align*} \\
The space $\X$ is then a quasi normed space w.r.to the quasi norm $\|\cdot\|_\Phi$. Similarly, an $\YY$-valued inner product induces a norm on $\X$.

 \begin{defn} 
If the complex vector space $\X$ is complete w.r.to the quasi norm $\|\cdot\|_\Phi$, then $\X$ is called a {\em quasi Banach space with $\YY$-valued quasi inner product} or for short a {\em quasi $B_\YY$-space}. 
\end{defn}
If $\Phi$ is not faithful, we can consider the set $$\mathfrak{N}_\Phi=\{x_1\in\X:\, \Phi(x_1,x_2)=0_\YY, \forall x_2\in \X\},$$
which is a subspace of $\X$ (see \cite{BIvT1}).  If a generalized (or the proper) Cauchy-Schwarz inequality holds, then
$$\mathfrak{N}_\Phi=\{x\in\X:\, \Phi(x,x)=0_\YY\}.$$

We denote by $\Lambda_\Phi(x)$ the coset containing $x\in \X$; i.e., $\Lambda_\Phi(x)=x+\mathfrak{N}_\Phi$ and define 
a $\YY$-valued positive  sesquilinear  map on   $\X/\mathfrak{N}_\Phi\times\X/\mathfrak{N}_\Phi$ as follows: 
%\begin{equation*}\ip{\cdot}{\cdot}_\Phi:\X/\mathfrak{N}_\Phi\times\,\X/\mathfrak{N}_\Phi\to\YY\end{equation*}
 \begin{equation}\label{eq: inner pr phi semidef}
\ip{\Lambda_\Phi(x_1)}{ \Lambda_\Phi(x_2)}_\Phi:=\Phi(x_1,x_2),\quad x_1,x_2\in\X.
\end{equation}The associated quasi norm is:\begin{equation}
\label{eq: norm phi semidef}
\|\Lambda_\Phi(x)\|_\Phi:=\sqrt{\|\Phi(x,x)\|_\YY},\quad x\in\X.\end{equation}
 The quotient space $\X/\mathfrak{N}_\Phi=\Lambda_\Phi(\X)$ is a quasi normed space (see \cite{BIvT1}).
We denote by $\widetilde{\X}$ the completion of $(\X/\mathfrak{N}_\Phi,\|\cdot\|_\Phi)$. 

\begin{defn} Let $(\A,\A_0)$ be a quasi *-algebra and $\Phi:\A \times \A\to \YY$ be a  positive sesquilinear  map. The map $\Phi$  is called\begin{itemize}
    \item {\em left-invariant} if
	 $$\Phi(ac,d)=\Phi(c, a^*d), \quad \forall \ a \in \A, \ c,d \in \A_0.$$\end{itemize} If the quasi *-algebra is also normed, $\Phi$ is called \begin{itemize}   \item {\em bounded} if there exists a constant $M>0$ such that $$\|\Phi(a,b)\|_\YY\leq M \|a\|\|b\|, \quad \forall a,b\in\A.$$
\end{itemize} \end{defn}

 \section{Generalized Cauchy-Schwarz inequality for $L^{p}$-valued positive maps} In this section, a generalized Cauchy-Schwarz inequality  for general positive sesquilinear maps into $L^p(\rho)$ is proved. The inequality has been studied in \cite{BIvT2} just in the case $p=1$ (and not for $p>1$). 
 With this inequality at hand we can represent such positive $L^p(\rho)$-valued maps in a quasi $B_\YY$-space (see Section \ref{sec: appl}).
Since $L^p(\rho)$ carries the operator involution, a positive sesquilinear map $\Phi:\X\times \X\to  L^p(\rho)$ is called {\em hermitian} if $\Phi(y,x) =\Phi(x,y)^*$ for every $x,y\in\X$.
%img 481

    %As usual, we will write $y_1\leq y_2$ whenever $y_2-y_1\in\KK$, with $y_1,y_2\in\KK$ and we will sometimes suppose that $\YY$ has an order-preserving norm in the sense that if $y_1\leq y_2$ with $y_1,y_2\in\KK$, then also $\|y_1\|_\YY\leq \|y_2\|_\YY$. 

\begin{prop}\label{prop:Lp-CS}
Let $\M$ be a von Neumann algebra with a faithful normal semi-finite trace $\rho$, let $1<p<\infty$, and let $\X$ be a complex vector space. If $ \Phi:\X\times \X \longrightarrow L^{p}(\rho)$ is a positive sesquilinear map, then  %the Cauchy--Schwarz type inequality holds
\[
\|\Phi(x,y)\|_{p} \le 2\|\Phi(x,x)\|_{p}^{1/2}\,\|\Phi(y,y)\|_{p}^{1/2},\quad \forall x,y\in\X.
\]\end{prop}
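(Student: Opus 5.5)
Our plan is to dualize the $L^p$-norm and reduce to scalar-valued positive sesquilinear forms, for which the ordinary Cauchy--Schwarz inequality holds. The factor $2$ should come from splitting the test element $B\in L^q(\rho)$ into its real and imaginary parts.

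First we establish that $\Phi$ is hermitian. Polarization gives
\[
\Phi(x,y)=\tfrac14\sum_{k=0}^{3} i^k\,\Phi(x+i^k y,x+i^k y),
\]
and each $\Phi(z,z)\ge 0$ is selfadjoint, so $\Phi(y,x)=\Phi(x,y)^*$.

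Next, fix a selfadjoint $B\in L^q(\rho)$ and set $\varphi_B(x,y)=\rho(\Phi(x,y)B)$. This is a complex sesquilinear form on $\X$. If $B\ge 0$, Lemma \ref{lem: rho} gives $\varphi_B(x,x)=\rho(\Phi(x,x)B)\ge 0$. The scalar Cauchy--Schwarz inequality then yields
\[
|\rho(\Phi(x,y)B)|^2\le \rho(\Phi(x,x)B)\,\rho(\Phi(y,y)B)\le \|\Phi(x,x)\|_p\|\Phi(y,y)\|_p\|B\|_q^2 ,
\]
where the last step uses Hölder's inequality.

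For a general $B$ with $\|B\|_q\le1$, we write $B=B_1+iB_2$ with $B_1=(B+B^*)/2$ and $B_2=(B-B^*)/2i$. Both are selfadjoint with $\|B_j\|_q\le 1$. It is therefore enough to prove
\[
|\rho(\Phi(x,y)C)|\le \|\Phi(x,x)\|_p^{1/2}\|\Phi(y,y)\|_p^{1/2}
\]
for every selfadjoint $C$ with $\|C\|_q\le 1$. The duality formula then gives the bound with constant $2$.

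The main obstacle is exactly this selfadjoint case, since $C$ need not be positive. Splitting $C=C_+-C_-$ and applying the positive case to each piece naively produces another factor of $2$. To avoid this, we use that $\varphi_C$ is a hermitian form on $\X$: $\overline{\varphi_C(x,y)}=\varphi_C(y,x)$, which follows from hermitianity of $\Phi$, the trace property and Lemma \ref{lem: rho}. Hence $\varphi_C=\varphi_{C_+}-\varphi_{C_-}$ is a difference of two positive forms. For such forms,
\[
|\varphi_C(x,y)|\le |\varphi_{C_+}(x,y)|+|\varphi_{C_-}(x,y)|\le a_+^{1/2}b_+^{1/2}+a_-^{1/2}b_-^{1/2}\le (a_++a_-)^{1/2}(b_++b_-)^{1/2},
\]
where $a_\pm=\rho(\Phi(x,x)C_\pm)$ and $b_\pm=\rho(\Phi(y,y)C_\pm)$, and the last step is the discrete Cauchy--Schwarz inequality. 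Now $a_++a_-=\rho(\Phi(x,x)|C|)\le \|\Phi(x,x)\|_p\,\||C|\|_q$ and $\||C|\|_q=\|C\|_q\le 1$, and likewise for $b_++b_-$. This gives the selfadjoint bound with constant $1$.

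Combining the two parts gives
\[
|\rho(\Phi(x,y)B)|\le 2\|\Phi(x,x)\|_p^{1/2}\|\Phi(y,y)\|_p^{1/2}.
\]
Taking the supremum over $\|B\|_q\le 1$ proves the stated inequality. The remaining checks are routine: $C_\pm,|C|\in L^q(\rho)$, and the trace manipulations extend to unbounded affiliated operators by the approximation argument in the Remark following the duality formula.
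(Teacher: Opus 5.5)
Your proof is correct and follows essentially the paper's route. You pair $\Phi(x,y)$ against a test element of $L^q(\rho)$ and split it into four positive pieces. You then apply the scalar Cauchy--Schwarz inequality to the positive forms $\rho(\Phi(\cdot,\cdot)\xi)$ (positive by Lemma \ref{lem: rho}), recombine with the discrete Cauchy--Schwarz inequality, and use $\|C_++C_-\|_q=\|C\|_q$, with the factor $2$ coming from the real/imaginary split.

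The only differences are minor. You invoke the duality formula over the whole unit ball of $L^q(\rho)$, whereas the paper uses the specific norming element $|z_n|^{p-1}u_n^*$ built from bounded approximants $z_n\to\Phi(x,y)$ and then passes to the limit. Your hermitianity step is not actually needed, since $\varphi_C=\varphi_{C_+}-\varphi_{C_-}$ follows from linearity of $B\mapsto\rho(\Phi(x,y)B)$ alone.
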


\begin{proof}

Fix $x,y \in \X$. If they are such that $\Phi(x,y)=0$ then the thesis follows. Now let $x,y \in \X$ be  such that $\Phi(x,y)\neq0$ and choose a sequence $\{z_n\}_{n\in\mathbb N} \subset L^{\infty}(\rho)\cap L^{p}(\rho)$ such that

\[
z_n \longrightarrow \Phi(x,y) \quad \text{in } L^{p}(\rho)\quad \mbox{ as }n\to\infty.
\]

Let $z_n = u_n |z_n|$ be the polar decomposition of $z_n$ for all $n\in\mathbb{N}$.
Since $z_n \in L^{p}(\rho)$, we have $|z_n|^{p-1} \in L^{q}(\rho)$, with $q=\frac{p}{p-1}$ and by Hölder’s inequality,

\[
z_n |z_n|^{p-1} \in L^{1}(\rho).
\]

Therefore,

\[
\rho(|z_n|^{p}) = \rho\big( u_n^*z_n\, |z_n|^{p-1} \big)
= \rho\big( z_n|z_n|^{p-1}u_n^* \big)
\quad \forall n\in\mathbb N.
\]

Hence,

\begin{align*}
     \big|\|\Phi(x,y)\|_{p}^p-&
 \rho\big( \Phi(x,y)\, |z_n|^{p-1} u_n^* \big) \big|
\\&\le \big|\|\Phi(x,y)\|_{p}^p-\|z_n\|_{p}^p\big|+\big|\rho\big( (\Phi(x,y)-z_n)\, |z_n|^{p-1} u_n^* \big) \big|\\
&\le \big|\|\Phi(x,y)\|_{p}^p-\|z_n\|_{p}^p\big|+\|\Phi(x,y)-z_n\|_p \,\||z_n|^{p-1} u_n^*\|_q\\
&\le \big|\|\Phi(x,y)\|_{p}^p-\|z_n\|_{p}^p\big|+\|\Phi(x,y)-z_n\|_p \,\||z_n|^{p-1}\|_q \\&= \big|\|\Phi(x,y)\|_{p}^p-\|z_n\|_{p}^p\big|+\|\Phi(x,y)-z_n\|_p \,\|z_n\|_p^{p-1}
\end{align*}

Since $z_n \to \Phi(x,y)$ in $L^{p}(\rho)$ as $n\to\infty$, it follows that $\|z_n\|_{p}^{p-1}\leq M$ for all $n\in\mathbb{N}$ and some $M>0$, hence we conclude that

\[
\rho\big( \Phi(x,y)\, |z_n|^{p-1} u_n^* \big)
\longrightarrow \|\Phi(x,y)\|_{p}^{p},\qquad \mbox{ as } n\to\infty.
\]

For each $n\in\mathbb N$, write

\[
|z_n|^{p-1} u_n^* = \xi_1^{(n)} - \xi_2^{(n)} + i\big( \xi_3^{(n)} - \xi_4^{(n)} \big),
\]
where $\xi_j^{(n)} \in L^{q}(\rho)\cap L^{\infty}(\rho) $, $\xi_j^{(n)} \ge 0$ for all $n\in\mathbb{N}$ and $j\in\{1,\dots,4\}$, and such that $\xi_1^{(n)}\xi_2^{(n)}=\xi_3^{(n)}\xi_4^{(n)}=0$ and $\xi_1^{(n)} - \xi_2^{(n)}=Re(|z_n|^{p-1} u_n^*)$, $ \xi_3^{(n)} - \xi_4^{(n)}=Im(|z_n|^{p-1} u_n^*)$, for all $n\in\mathbb{N}$. Then, for all $n\in\mathbb{N}$
\[
\|\xi_1^{(n)} - \xi_2^{(n)}\|_{q}
= \|\xi_1^{(n)} + \xi_2^{(n)}\|_{q}
\leq \|\, |z_n|^{p-1} u_n^* \|_{q}
\]
 and 
\[
\|\xi_3^{(n)} - \xi_4^{(n)}\|_{q}
= \|\xi_3^{(n)} + \xi_4^{(n)}\|_{q}
\leq \|\, |z_n|^{p-1} u_n^* \|_{q}.
\]
Hence we have $$|\rho\big( \Phi(x,y)\, |z_n|^{p-1} u_n^* \big)|\leq \sum_{j=1}^4|\rho\big( \Phi(x,y)\, \xi_j^{(n)}\big)|.$$

Define scalar sesquilinear forms

\[
\varphi_j^{(n)}(a,b) := \rho\big( \Phi(a,b)\, \xi_j^{(n)} \big),
\qquad a,b\in\X,\ j=1,\dots,4.
\]

Since $\Phi$ is positive and $\xi_j^{(n)} \ge 0$, each $\varphi_j^{(n)}$ is a positive  sesquilinear form on $\X$ by Lemma \ref{lem: rho}.
% since $\rho(\Phi(a,b)\xi_j^{(n)})=\rho(\Phi(a,b)(\xi_j^{(n)})^{1/2}(\xi_j^{(n)})^{1/2})=\rho((\xi_j^{(n)})^{1/2}\Phi(a,b)(\xi_j^{(n)})^{1/2})$ for all $a,b\in\X$ and by the assumption that $\Phi$ is positive.

Applying the scalar Cauchy–Schwarz inequality to  $\varphi_j^{(n)}$ for each $j\in\{1,\dots, 4\}$ gives

\begin{align*}
    \sum_{j=1}^{4}
\big| \rho\big( \Phi(x,y)\, \xi_j^{(n)} \big) \big|
&\le
\sum_{j=1}^{4}
(\varphi_j^{(n)}(x,x))^{1/2}\, (\varphi_j^{(n)}(y,y))^{1/2}\\%by the Cauchy Schwarz ineq. applied to the scalar product in R^4
&\le \left(\sum_{j=1}^{4}
(\varphi_j^{(n)}(x,x))^{1/2}\right)^{1/2}\, \left(\sum_{j=1}^{4}
(\varphi_j^{(n)}(y,y))^{1/2}\right)^{1/2}\\&=\left(\rho\big( \Phi(x,x)\, \sum_{j=1}^{4}\xi_j^{(n)} \big)
\right)^{1/2}\,\left(\rho\big( \Phi(y,y)\, \sum_{j=1}^{4}\xi_j^{(n)} \big)
\right)^{1/2}\\% by the Holder ineq \varphi_j^{(n)}(x,x)
%\le \|\Phi(x,x)\|_{p}\, \|\xi_j^{(n)}\|_{q}^{2},
% \qquad
% \varphi_j^{(n)}(y,y)
% \le \|\Phi(y,y)\|_{p}\, \|\xi_j^{(n)}\|_{q}^{2}.
&\leq \|\Phi(x,x)\|_{p}^{1/2}\,\|\Phi(y,y)\|_{p}^{1/2} \left\|\sum_{j=1}^{4}\xi_j^{(n)}\right\|_{q}\\%&\leq \|\Phi(x,x)\|_{p}^{1/2}\,\|\Phi(y,y)\|_{p}^{1/2} \|\sum_{j=1}^{4}\xi_j^{(n)}\|_{q}\\
&\leq \|\Phi(x,x)\|_{p}^{1/2}\,\|\Phi(y,y)\|_{p}^{1/2} (\|\xi_1^{(n)} + \xi_2^{(n)}\|_{q}+ \|\xi_3^{(n)} + \xi_3^{(n)}\|_{q})
\\&\leq 2 \|\Phi(x,x)\|_{p}^{1/2}\,\|\Phi(y,y)\|_{p}^{1/2} \|\, |z_n|^{p-1} u_n^* \|_{q}.
\end{align*}
    by using the Cauchy-Schwarz inequality in $\mathbb{R}^4$ and the Hölder’s inequality.

% \[
% \varphi_j^{(n)}(x,x)
% \le \|\Phi(x,x)\|_{p}\, \|\xi_j^{(n)}\|_{q}^{2},
% \qquad
% \varphi_j^{(n)}(y,y)
% \le \|\Phi(y,y)\|_{p}\, \|\xi_j^{(n)}\|_{q}^{2}.
% \]}

Since \[
    \|\, |z_n|^{p-1} u_n^* \|_{q}
\leq \|\, |z_n|^{p-1} \|_{q}\|u_n^* \|_{\infty}\leq \| |z_n|^{p-1}\|_{q}=\|z_n\|_p^{p-1},
\]
 it is 

\begin{align}\label{eq: ineq 1}
\big| \rho\big( \Phi(x,y)\, |z_n|^{p-1} u_n \big) \big|
&\nonumber\le
2\, \|\Phi(x,x)\|_{p}^{1/2}\, \|\Phi(y,y)\|_{p}^{1/2}\,
\|\, |z_n|^{p-1} u_n^* \|_{q}\\&\le
2\, \|\Phi(x,x)\|_{p}^{1/2}\, \|\Phi(y,y)\|_{p}^{1/2}\,
\| z_n\|_{p}^{p-1}.
\end{align}

and letting $n\to\infty$ yields

\[
\|\Phi(x,y)\|_{p}^{p}
\le
2\, \|\Phi(x,x)\|_{p}^{1/2}\, \|\Phi(y,y)\|_{p}^{1/2}\,
\|\Phi(x,y)\|_{p}^{p-1}.
\]

Dividing both sides by $\|\Phi(x,y)\|_{p}^{p-1}\neq0 $ gives

\[
\|\Phi(x,y)\|_{p}
\le
2\, \|\Phi(x,x)\|_{p}^{1/2}\, \|\Phi(y,y)\|_{p}^{1/2}.
\]
\end{proof} It worths notice that, when $p=2$, the constant $2$ in the inequality can be reduced to $\sqrt{2}$.
\begin{rem}
    
Let \(p = 2\). If \(A \in L^2(\rho) \cap L^{\infty}(\rho)\), then

\begin{align*}
    \rho(AA^{*})
&= \rho\big( (\Re A - i \Im A)(\Re A + i \Im A) \big)
\\&= \rho\big( (\Re A)^{2} \big) + \rho\big( (\Im A)^{2} \big)
  + i\big( \rho(\Re A \, \Im A) - \rho(\Im A \, \Re A) \big).
\end{align*}

Since \(\Re A, \Im A \in L^2(\rho)\), %(since $\Re A = \tfrac{1}{2}(A + A^{*})$, $\Im A = \tfrac{1}{2i}(A - A^{*})$ and \(A, A^{*} \in L^2(\rho)\)), 
we have
\[
\rho(\Re A \, \Im A) = \rho(\Im A \, \Re A).
\]
Hence, $\rho(AA^*)=\rho((\Re A)^2)+\rho((\Im A)^2)$. By applying this equality in the proof of Proposition \ref{prop:Lp-CS} we deduce that for every $n\in\mathbb{N}$ \begin{align*}
    \||z_n|^{p-1}u_n^*\|_2&=\||z_n|u_n^*\|_2=(\|\xi_1^{n}-\xi_2^{n}\|_2^2+\|\xi_3^{n}-\xi_4^{n}\|_2^2)^{1/2}\\&(\|\xi_1^{n}+\xi_2^{n}\|_2^2+\|\xi_3^{n}+\xi_4^{n}\|_2^2)^{1/2}
\end{align*}
since $\xi_1^{n}\xi_2^{n}=\xi_3^{n}\xi_4^{n}=0$ for every $n\in\mathbb{N}$.
However, by the Cauchy-Schwarz for the inner product in $\mathbb{R}^2$, we get that for every $n\in\mathbb{N}$ \begin{align*}   (\|\xi_1^{n}+\xi_2^{n}\|_2^2+\|\xi_3^{n}+\xi_4^{n}\|_2^2)^{1/2}%&=\ip{\left(\|\xi_1^{n}+\xi_2^{n}\|_2,\|\xi_3^{n}+\xi_4^{n}\|_2\right)}{(1,1)}\\
   &\leq \sqrt{2}\left(\|\xi_1^{n}+\xi_2^{n}\|_2^2+\|\xi_3^{n}+\xi_4^{n}\|_2^2\right)^{1/2}\\&=\sqrt{2}\||z_n|u_n^*\|_2.
\end{align*}\end{rem}

% Therefore,

% \[
% \rho(AA^{*}) = \rho\big( (\Re A)^{2} \big) + \rho\big( (\Im A)^{2} \big).
% \]

%     {\color{red} In alternative:\begin{proof}
% Fix $x,y\in\X$ and set $a=\Phi(x,y)\in L^{p}(\rho)$.
% For any $b\in L^{q}(\rho)$ with $\|b\|_{q}\le 1$, consider the scalar sesquilinear form

% \[
% \varphi_b(u,v)=\rho\!\left(b^{*}\,\Phi(u,v)\,b\right),\qquad u,v\in\X.
% \]

% Since $\Phi$ is positive, $\varphi_b$ is a positive scalar sesquilinear form.
% Hence the scalar Cauchy--Schwarz inequality gives

% \[
% |\varphi_b(x,y)|
% \le \varphi_b(x,x)^{1/2}\,\varphi_b(y,y)^{1/2}.
% \]

% We estimate the diagonal terms using Hölder:

% \[
% \varphi_b(x,x)
% = \rho\!\left(b^{*}\Phi(x,x)b\right)
% \le \|\Phi(x,x)\|_{p}\,\|b\|_{q}^{2}
% \le \|\Phi(x,x)\|_{p},
% \]

% and similarly

% \[
% \varphi_b(y,y)\le \|\Phi(y,y)\|_{p}.
% \]

% Thus

% \[
% |\rho(b^{*}ab)|
% =|\varphi_b(x,y)|
% \le \|\Phi(x,x)\|_{p}^{1/2}\,\|\Phi(y,y)\|_{p}^{1/2}.
% \]

% Since $b^{*}ab$ and $ab$ have the same trace dual pairing norm, we have

% \[
% |\rho(ab)|\le |\rho(b^{*}ab)|.
% \]

% Taking the supremum over all $b\in L^{q}(\rho)$ with $\|b\|_{q}\le 1$ yields

% \[
% \|a\|_{p}
% \le \|\Phi(x,x)\|_{p}^{1/2}\,\|\Phi(y,y)\|_{p}^{1/2}.
% \]

% \end{proof}
% }

\subsection{Estimates for the real and imaginary parts of positive $L^{2}(\rho)$-valued sesquilinear maps}

Motivated by \cite[Lemma 1]{Janssens}, we now provide norm estimates for the real and imaginary parts of positive $L^{2}(\rho)$-valued sesquilinear maps.

\begin{prop}\label{prop: estimates}
Let $\Phi : \X \times \X \to L^{2}(\rho)$ be a positive sesquilinear map. Then, for all $x,y \in \X$,
\[
\| \Re \Phi(x,y) \|_{2}^{2} \le \|\Phi(x,x)\|_{2}\, \|\Phi(y,y)\|_{2},
\quad
\| \Im \Phi(x,y) \|_{2}^{2} \le \|\Phi(x,x)\|_{2}\, \|\Phi(y,y)\|_{2},
\]

where

\[
\Re \Phi(x,y)=\frac{\Phi(x,y)+\Phi(x,y)^{*}}{2},
\qquad
\Im \Phi(x,y)=\frac{\Phi(x,y)-\Phi(x,y)^{*}}{2i}.
\]

\end{prop}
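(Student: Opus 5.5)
The plan is to reduce the statement to the scalar Cauchy--Schwarz inequality, applied to positive forms of the type $\rho(\Phi(\cdot,\cdot)\,\xi)$ with $\xi\ge 0$, as in the proof of Proposition \ref{prop:Lp-CS}. The difference is that the test element $\xi$ will now be built from $\Re\Phi(x,y)$ itself. This avoids splitting into four pieces and so gives the constant $1$.

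\emph{Preliminary reductions.}
\begin{itemize}
\item \emph{Hermitian property.} Since $\Phi(z,z)\ge 0$ is selfadjoint for every $z\in\X$, the polarization identity gives $\Phi(y,x)=\Phi(x,y)^*$. Hence $H:=\Re\Phi(x,y)=\tfrac12(\Phi(x,y)+\Phi(y,x))$ is a selfadjoint element of $L^2(\rho)$.
\item \emph{Imaginary part.} Since $\Phi(x,iy)=-i\Phi(x,y)$, we get $\Im\Phi(x,y)=\Re\Phi(x,iy)$. Also $\Phi(iy,iy)=\Phi(y,y)$. So the second inequality follows from the first applied to the pair $(x,iy)$, and it suffices to treat $\Re\Phi(x,y)$.
\end{itemize}

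\emph{Main estimate.} Write $H=H_+-H_-$ with $H_\pm\ge0$, $H_+H_-=0$, $H_\pm\in L^2(\rho)$ and $|H|=H_++H_-$. Define
\[
\varphi_\pm(a,b):=\rho\big(\Phi(a,b)H_\pm\big),\qquad a,b\in\X .
\]
By Lemma \ref{lem: rho} (with $p=q=2$) these are positive sesquilinear forms, hence hermitian. Therefore
\[
\rho(\Phi(y,x)H_\pm)=\overline{\rho(\Phi(x,y)H_\pm)},
\qquad\text{so}\qquad
\rho(HH_\pm)=\Re\varphi_\pm(x,y).
\]
Then
\[
\|H\|_2^2=\rho(H^2)=\rho(HH_+)-\rho(HH_-)\le|\varphi_+(x,y)|+|\varphi_-(x,y)| .
\]
Apply the scalar Cauchy--Schwarz inequality to each of $\varphi_\pm$, and then Cauchy--Schwarz in $\mathbb R^2$. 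This bounds the right-hand side by
\[
\rho\big(\Phi(x,x)|H|\big)^{1/2}\,\rho\big(\Phi(y,y)|H|\big)^{1/2}.
\]
By H\"older's inequality and $\||H|\|_2=\|H\|_2$, this is at most $\|\Phi(x,x)\|_2^{1/2}\|\Phi(y,y)\|_2^{1/2}\|H\|_2$. Dividing by $\|H\|_2$ (the case $H=0$ is trivial) gives the claim.

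\emph{Main obstacle.} The delicate step is justifying the trace manipulations when $H_\pm$ are unbounded. These are the identities $\rho(H^2)=\rho(HH_+)-\rho(HH_-)$ and the conjugation identity, as well as positivity of $\varphi_\pm$ and the equality $\rho(\Phi(x,x)H_+)+\rho(\Phi(x,x)H_-)=\rho(\Phi(x,x)|H|)$. I would handle this as in Lemma \ref{lem: rho}. First approximate $H$ in $L^2$ by bounded selfadjoint spectral truncations $H^{(n)}=HE_H([-n,n])$, whose positive and negative parts are bounded, lie in $L^2$, and converge to $H_\pm$ in $L^2$. Then run the argument with $H^{(n)}_\pm$, and pass to the limit using the continuity of $\rho(AB)$ in $L^2\times L^2$ noted in the Remark following H\"older's inequality.
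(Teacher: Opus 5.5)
Your proposal is correct and follows essentially the paper's own route: the paper also tests $\Phi(x,y)$ against $\Re\Phi(x,y)$ itself, approximated by the bounded selfadjoint elements $\Re z_n=z_n^+-z_n^-$. It likewise splits into positive and negative parts, applies the scalar Cauchy--Schwarz inequality to the two positive forms, then Cauchy--Schwarz in $\mathbb{R}^2$ and H\"older, and finally divides by $\|\Re\Phi(x,y)\|_2$. The differences are cosmetic. The paper gets $\|\Re\Phi(x,y)\|_2^2=\Re\,\rho(\Phi(x,y)\Re\Phi(x,y))$ from the real-valuedness part of Lemma \ref{lem: rho}, whereas you use the hermitian property of $\Phi$ and of $\varphi_\pm$. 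The paper also only calls the imaginary case ``analogous'', while you reduce it explicitly to the real case via the pair $(x,iy)$.
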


\begin{proof} Fix any $x,y\in\X$. 
We only prove the first inequality concerning $\Re \Phi(x,y)$; the proof of the second one is analogous.
By  Lemma \ref{lem: rho} it is
\[
\Re \left(\rho\bigl(\left(\Re \Phi(x,y)\right)\,\Phi(x,y)\bigr)\right)
= \rho\bigl(\Re \Phi(x,y)\,\Re \Phi(x,y)\bigr)
= \|\Re \Phi(x,y)\|_{2}^2.
\]
Indeed, \[
\rho\bigl(\left(\Re \Phi(x,y)\right)\,\Phi(x,y)\bigr)
= \rho\bigl(\left(\Re \Phi(x,y)\right)^2\bigr)
+ i\,\rho\bigl(\left(\Re \Phi(x,y)\right)\,\left(\Im \Phi(x,y)\right)\bigr),
\] and by the above, both $\rho\left(\bigl(\Re \Phi(x,y)\bigr)^2\right)$ 
and 
$\rho\left(\Re \Phi(x,y)\Im \Phi(x,y)\right)$
are real, so $$\Re\left(\rho\bigl(\left(\Re \Phi(x,y)\right)\,\Phi(x,y)\bigr)\right)=\rho\left(\bigl(\Re \Phi(x,y)\bigr)^2\right).$$

Let $\{z_{n}\}\subset L^{2}(\rho)\cap L^{\infty}(\rho)$ be such that $z_{n}\to \Phi(x,y)$ in $L^{2}(\rho)$ as $n\to \infty$.  
Then

\[
\Re z_{n}=\frac{z_{n} + z_{n}^*}{2} \to \Re \Phi(x,y)=\frac{\Phi(x,y) + (\Phi(x,y))^*}{2} \quad \text{in } L^{2}(\rho), \mbox{ as }n\to \infty.
\]

Write for every $n\in\mathbb{N}$

\[
\Re z_{n} = z_{n}^{+} -z_{n}^{-}, \quad \mbox{ where } z_{n}^{\pm}\ge 0\qquad \mbox{ and }z_{n}^{+}z_{n}^{-}=0.
\]

Then
\[
\bigl|\Re \left(\rho(\Phi(x,y)\,\Re \Phi(x,y))\right)\bigr|
\le\bigl| \rho\bigl(\Phi(x,y)\left(\Re\Phi(x,y)\right)\bigr)\bigr|,
\]
and, on the other hand, 
\begin{align*}
    \bigl| \rho(\Phi(x,y)(z_{n}^{+} - z_{n}^{-}))\bigr|
&\le\|\Phi(x,x)\|_2^{1/2}\|\Phi(y,y)\|_2^{1/2}\|z_{n}^{+} +z_{n}^{-}\|_2\\&=\|\Phi(x,x)\|_2^{1/2}\|\Phi(y,y)\|_2^{1/2}\|z_{n}^{+} - z_{n}^{-}\|_2
\end{align*}
as shown in the proof of Proposition \ref{prop:Lp-CS}.
Letting $n\to\infty$ on both sides yields \begin{align*}  \| \Re\left( \Phi(x,y) \right) \|_2^2&\leq\bigl| \rho\!\left( \Phi(x,y)\, \Re \Phi(x,y) \right) \bigr| \\&\le \|\Phi(x,x)\|_{2}^{1/2}\, \|\Phi(y,y)\|_{2}^{1/2}\, \|\Re \Phi(x,y)\|_{2}. \end{align*}This completes the proof.\end{proof}

Let us consider the partial *-algebra $\LDH$ of all closable operators $A$ such that $D(A)=\D$, $D(A^*)\supset \D$. The involution is defined by $A^\dag:=A^*_{\upharpoonright \D}.$

Let $\Phi: \LDH\times \LDH\to L^2(\rho)$ be a positive sesquilinear map.
Let $A,B\in \LDH$. We say that $A,B$ admit a $\Phi$-commutator, if there exists $\C\in \in \LDH$ such that
$$ \Phi(AX,B^\dag Y)-\Phi(BX, A^\dag Y)= \Phi (CX,Y), \quad \forall X,Y\in \LDb$$ where $\LDb$ denotes the *-algebra of bounded operators of $\LDH$ such that $A\D\subset \D$, $A^\dag \D\subset \D.$
In particular, if $A=A^\dag, B=B^\dag$ are symmetric operators and a $\Phi$-commutator $C$ exists, then $C=iK$ with $K=K^\dag$.

If $A\in \LDH$ is a symmetric operator (i.e., $A=A^\dag$)
we define a map $(\Delta A)_\Phi: {\mb R} \to {\mb R}^+$ by
$$ (\Delta A)_\Phi(\lambda) = \|\Phi(A-\lambda I, A-\lambda I)\|^{1/2}_2, \quad \lambda \in {\mb R}\}.$$
which can be interpreted as a sort of {\em uncertainty} of $A$.

Proposition \ref{prop: estimates} allows us to get an uncertainty relation for certain pairs of symmetric operators. 
\begin{prop}\label{prop: 3.4}
	Let $A,B,K\in \LDH$ be symmetric operators satisfying
\begin{itemize}
	\item[(a)]$\Phi(AX, Y)=\Phi(X,A Y)$, $\Phi(BX, Y)=\Phi(X,B Y)$ for every $X,Y\in \LDb$.
	\item[(b)] $ \Phi(AX,B Y)-\Phi(BX, A Y)= \Phi (iKX,Y), \quad \forall X,Y\in \LDb$
\end{itemize}	
	Then, for every $\lambda, \mu \in {\mb R}$,  the following uncertainty relation holds
\begin{equation}
(\Delta A)_\Phi(\lambda) (\Delta B)_\Phi(\mu) \geq \frac12 \gamma_\Phi (K), 
\end{equation}
where $\gamma_\Phi(K)= \|\Phi(K,I)\|_2$
	\end{prop}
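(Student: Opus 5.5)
Fix $\lambda,\mu\in\mathbb R$ and set $A_\lambda=A-\lambda I$, $B_\mu=B-\mu I$. These are symmetric elements of $\LDH$. The plan is to reduce the statement to Proposition \ref{prop: estimates}, applied to the pair $(A_\lambda,B_\mu)$, and to identify $\Im\Phi(A_\lambda,B_\mu)$ with $\frac12$ of something whose $2$-norm is $\gamma_\Phi(K)$.

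\textbf{Step 1: reduce the commutator to $(A_\lambda,B_\mu)$.} We check that $A_\lambda,B_\mu$ still satisfy (a) and (b) with the same $K$. For (a), subtracting $\lambda I$ only adds $\lambda\Phi(X,Y)$ on both sides, and $\lambda$ is real. For (b), expand
\[
\Phi(A_\lambda X,B_\mu Y)-\Phi(B_\mu X,A_\lambda Y).
\]
The cross terms $-\mu\Phi(AX,Y)+\mu\Phi(X,AY)$ vanish by (a), and likewise for the terms in $\lambda$ and $B$. The $\lambda\mu$ terms cancel. So the difference is still $\Phi(iKX,Y)$.

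\textbf{Step 2: express the commutator through $\Phi(A_\lambda,B_\mu)$.} Take $X=Y=I\in\LDb$ in (b). Using (a) we move $B_\mu$ across in the first term, giving $\Phi(A_\lambda,B_\mu)$, and in the second term, giving $\Phi(B_\mu,A_\lambda)$. We need $\Phi$ to be hermitian, i.e. $\Phi(B_\mu,A_\lambda)=\Phi(A_\lambda,B_\mu)^*$. This follows by polarization from positivity, since $\Phi(x,x)$ is selfadjoint; I would state this explicitly. Then
\[
\Phi(A_\lambda,B_\mu)-\Phi(A_\lambda,B_\mu)^*=2i\,\Im\Phi(A_\lambda,B_\mu)=i\,\Phi(K,I).
\]
Hence $\|\Im\Phi(A_\lambda,B_\mu)\|_2=\frac12\|\Phi(K,I)\|_2=\frac12\gamma_\Phi(K)$.

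\textbf{Step 3: apply Proposition \ref{prop: estimates}.} The second inequality there gives
\[
\tfrac14\gamma_\Phi(K)^2\le\|\Phi(A_\lambda,A_\lambda)\|_2\,\|\Phi(B_\mu,B_\mu)\|_2=(\Delta A)_\Phi(\lambda)^2(\Delta B)_\Phi(\mu)^2.
\]
Taking square roots yields the claim.

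The main obstacle is Step 2, namely justifying the evaluation at $X=Y=I$ and the transfers $\Phi(AX,BY)=\Phi(BAX,Y)$-type manipulations. Condition (a) is stated for $X,Y\in\LDb$, and $BY$ need not lie in $\LDb$. I would avoid composing operators altogether. Condition (a) with $X=I$, $Y=B_\mu$ is not directly available, so instead I would apply (b) directly with $X=Y=I$:
\[
\Phi(A_\lambda,B_\mu)-\Phi(B_\mu,A_\lambda)=\Phi(iK,I).
\]
This requires only $I\in\LDb$, which holds. Together with hermiticity this gives Step 2 without using (a) beyond Step 1. Whether Step 1 itself needs (a) with arguments $X$ or $Y$ equal to $I$ is the only point to double-check, and there it is used in exactly that form.
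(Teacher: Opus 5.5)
Your proposal is correct and follows essentially the same route as the paper: shift to $A-\lambda I$, $B-\mu I$ using (a), evaluate (b) at $X=Y=I$, use hermiticity to get $\Phi(K,I)=2\,\Im\Phi(A-\lambda I,B-\mu I)$, and conclude with Proposition \ref{prop: estimates}. Your explicit polarization argument for hermiticity, together with your observation that (b) at $X=Y=I$ needs no further use of (a), makes precise points that the paper's proof leaves implicit.
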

\begin{proof}
As one can prove by a direct easy computation, the condition (a) implies that if $A,B$ satisfy condition (b) then the same holds true for $A-\lambda I$ and $B-\mu I$, with the same operator $K$ on the right hand side. 
Then, from (b) one obviously has
\begin{equation} \label{eqn_commutator} \Phi(A-\lambda I,B-\mu I )-\Phi(B-\mu I, A-\lambda I)= \Phi (iK,I).\end{equation}
By Proposition \ref{prop: estimates}, we obtain
$$\|\Re \Phi(iK,I)\|_2\leq2 \|\Phi(A-\lambda I, A-\lambda I)\|_2^{1/2} \|\Phi(B-\mu I, B-\mu I)\|_2^{1/2} $$
and
$$\|\Im \Phi(iK,I)\|_2\leq 2\|\Phi(A-\lambda I, A-\lambda I )\|_2^{1/2} \|\Phi(B-\mu I, B-\mu I)\|_2^{1/2}. $$
Taking into account \eqref{eqn_commutator} we can prove that $\Phi(K,I)$ is self-adjoint. Then, 
%\footnote{FOR US: we need to use the hermiticity of $\Phi$ and take adjoint in %\eqref{eqn_commutator}},
$$\|\Phi(K,I)\|_2\leq 2 \|\Phi(A-\lambda I, A-\lambda I)\|_2^{1/2} \|\Phi(B-\lambda I, B-\lambda I)\|_2^{1/2} .$$
In other terms
$$\|\Phi(K,I)\|_2\leq 2 (\Delta A)_\Phi(\lambda) (\Delta B)_\Phi(\mu) .$$
\end{proof}
% \begin{rem}
%     It worths notice that, with an obvious modification of the definitions, the statement holds also for $\Phi: \M\times \M\to L^2(\rho)$.
% \end{rem}
\begin{rem}\label{rem_3.5} The interest of the previous proposition relies on its close analogy with the uncertainty relations that in quantum physics is linked to the commutation relations. The term $\gamma_\phi(K)$ can be interpreted as sort of {\em mean value} of $K$ with respect to $\Phi$, while a term like $(\Delta A)_\Phi$ can be understood as the {\em variance} of $A$ with respect to $\Phi$.
\end{rem} 

A statement analogous to Proposition \ref{prop: 3.4} can be obtained in a more abstract setting. 
Let $(\A, \Ao)$ be a quasi *-algebra with unit $\id$ and $\Phi:\A\times \A \to L^2(\rho)$ a positive sesquilinear map satisfying the {\em invariance} condition
$$\Phi(ax,y)=\Phi(x,a^*y), \quad \forall a\in \A, \, x,y \in \Ao.$$
As before, if $a,b\in \A$, we say that $a,b$ admit a $\Phi$-commutator, if there exists $c\in \A$ such that
$$ \Phi(ax,b^* y)-\Phi(bx, a^* y)= \Phi (cx,y), \quad \forall x,y\in \A_0.$$ 
For $a\in \A$ and $\lambda \in {\mb R}$, we put $$\Delta_\Phi(a)(\lambda)=\|\Phi(a-\lambda\id, a-\lambda \id)\|_2.$$
\begin{prop}\label{prop: 3.6} Let $a,b$ be symmetric elements of $\A$ and assume that they admit a $\Phi$-commutator $c$. Then, for every $\lambda, \mu \in {\mb R} $,
\begin{equation}
	(\Delta a)_\Phi(\lambda) (\Delta b)_\Phi(\mu) \geq \frac12 \gamma_\Phi (c), 
\end{equation}
where $\gamma_\Phi(c)= \|\Phi(c,\id)\|_2$.
\end{prop}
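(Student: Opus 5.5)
The plan is to mimic the proof of Proposition \ref{prop: 3.4}, replacing operators in $\LDH$ by elements of the quasi *-algebra and $\LDb$ by $\Ao$. First I will check that the shifted elements $a-\lambda\id$ and $b-\mu\id$ still admit $c$ as $\Phi$-commutator. Since $\id\in\Ao$ is the unit, expanding $\Phi((a-\lambda\id)x,(b-\mu\id)^*y)-\Phi((b-\mu\id)x,(a-\lambda\id)^*y)$ with $a^*=a$, $b^*=b$ and $\lambda,\mu$ real gives four groups of terms. The term $\Phi(ax,by)-\Phi(bx,ay)$ equals $\Phi(cx,y)$. The mixed terms are $-\mu\Phi(ax,y)+\mu\Phi(x,ay)$ and $-\lambda\Phi(x,by)+\lambda\Phi(bx,y)$, and these cancel by the invariance condition $\Phi(ax,y)=\Phi(x,a^*y)$. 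The scalar terms cancel trivially. Taking $x=y=\id$ then gives
\[
\Phi(a-\lambda\id,b-\mu\id)-\Phi(b-\mu\id,a-\lambda\id)=\Phi(c,\id).
\]

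Next I want to see the left-hand side as a multiple of an imaginary part. The main obstacle is that $\Phi$ is only assumed positive, not hermitian. I will derive hermiticity from positivity by polarization. $\Phi(x,x)\ge0$ is self-adjoint for every $x$, and $\Phi(x,y)=\frac14\sum_{k=0}^3 i^k\Phi(x+i^ky,x+i^ky)$. Taking adjoints of this identity yields $\Phi(x,y)^*=\Phi(y,x)$ in $L^2(\rho)$. Hence, writing $F=\Phi(a-\lambda\id,b-\mu\id)$, the identity above becomes $F-F^*=2i\,\Im F=\Phi(c,\id)$.

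Finally I apply Proposition \ref{prop: estimates} to the pair $(a-\lambda\id,b-\mu\id)$. This gives $\|\Im F\|_2^2\le\|\Phi(a-\lambda\id,a-\lambda\id)\|_2\|\Phi(b-\mu\id,b-\mu\id)\|_2$, so that
\[
\|\Phi(c,\id)\|_2=2\|\Im F\|_2\le 2\,\|\Phi(a-\lambda\id,a-\lambda\id)\|_2^{1/2}\|\Phi(b-\mu\id,b-\mu\id)\|_2^{1/2}.
\]
Dividing by $2$ gives $\frac12\gamma_\Phi(c)$ on the left. The right-hand side is $(\Delta a)_\Phi(\lambda)(\Delta b)_\Phi(\mu)$, provided $\Delta_\Phi$ is read with the square root, as in the definition preceding Proposition \ref{prop: 3.4}.

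The normalization here needs care. The displayed definition of $\Delta_\Phi(a)(\lambda)$ omits the exponent $1/2$. Without it, the inequality would not be homogeneous in $\Phi$, so I will adopt the $1/2$ version, consistent with $(\Delta A)_\Phi$.
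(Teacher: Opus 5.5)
Your argument is correct and is essentially the paper's route: Proposition~\ref{prop: 3.6} is presented as the abstract analogue of Proposition~\ref{prop: 3.4}, whose proof likewise shifts by $\lambda\id,\mu\id$ using invariance, evaluates the commutator identity at the unit, and uses Proposition~\ref{prop: estimates} together with hermiticity of $\Phi$ (your single estimate on $\Im F$ versus the paper's separate real/imaginary bounds is only a cosmetic difference). Your polarization argument for $\Phi(y,x)=\Phi(x,y)^*$ makes explicit the hermiticity the paper uses tacitly, and you are right that $(\Delta a)_\Phi$ must carry the exponent $1/2$: without it the two sides scale differently under $\Phi\mapsto t\Phi$, and the inequality fails for small $t>0$ whenever $\gamma_\Phi(c)\neq 0$.
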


\subsection{Cauchy-Schwarz inequalities for normal elements in $L^p(\rho)$}%{Variant for $L^{p}$-valued maps into a von Neumann algebra}

% A special case of Proposition \ref{prop: num rad norm-CS} is the following.

% The same argument as in Proposition \ref{prop:Lp-CS} applies when the $\Phi$ takes its values in a von Neumann algebra $\M$ and one measures the $L^{p}$-norm with respect to a finite faithful normal trace $\rho$.

Let $\rho$ be a faithful, normal, semifinite trace on a von Neumann algebra $\M$.
We have the following proposition.

\begin{prop}\label{prop: 3.7}
	Let $p>1$ and let $\Phi : \X\times \X \to L^{p}(\rho)$ be a positive sesquilinear map.
	Suppose that $\Phi(x,y)\in L^{p}(\rho)\cap L^{\infty}(\rho)$ for some $x,y\in \X$.
	If $\Phi(x,y)$ is normal, then	
	\[
	\|\Phi(x,y)\|_{p}
	\;\le\;
	\|\Phi(x,x)\|_{p}^{1/2}\, \|\Phi(y,y)\|_{p}^{1/2}.
	\]	
\end{prop}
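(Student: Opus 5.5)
The plan is to run the duality argument of Proposition \ref{prop:Lp-CS}, but to exploit normality so that the splitting into four positive pieces, which is the source of the factor $2$, is replaced by a splitting into arbitrarily many positive pieces with almost constant phase. These pieces will be recombined by the discrete Cauchy--Schwarz inequality with no loss.

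Set $T=\Phi(x,y)\in L^p(\rho)\cap L^\infty(\rho)$ and assume $T\neq0$. Since $T$ is a bounded normal operator in $\M$, it has a spectral measure $E$ with values in $\M$. Moreover $|T|$ and the spectral projections of $T$ all commute.

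Fix $N\in\mathbb N$ and cut the plane into the $N$ half-open sectors
\[
S_k=\{re^{i\theta}: r>0,\ \theta\in[2\pi(k-1)/N,2\pi k/N)\},\qquad k=1,\dots,N,
\]
and let $\theta_k$ be the midpoint angle of $S_k$. Put $E_k=E(S_k)$ and
\[
\xi_k=|T|^{p-1}E_k .
\]
Each $\xi_k$ is positive and bounded, because $T$ is bounded. Since $\xi_k\le|T|^{p-1}$ and $|T|^{p-1}\in L^q(\rho)$, each $\xi_k$ lies in $L^q(\rho)$. Finally $\sum_k\xi_k=|T|^{p-1}$, because $E(\mathbb C\setminus\{0\})$ is the support of $|T|$.

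The first step is a lower bound. By functional calculus,
\[
e^{-i\theta_k}T\xi_k=\int_{S_k}e^{-i\theta_k}\lambda|\lambda|^{p-1}\,dE(\lambda),
\]
and the integrand has real part at least $\cos(\pi/N)\,|\lambda|^p$ on $S_k$. Applying $\rho$, which is positive on positive elements of $L^1$ (Lemma \ref{lem: rho}), and summing over $k$ gives
\[
\sum_{k=1}^N|\rho(T\xi_k)|\ \ge\ \Re\sum_{k=1}^N e^{-i\theta_k}\rho(T\xi_k)\ \ge\ \cos(\pi/N)\,\rho(|T|^p)=\cos(\pi/N)\,\|T\|_p^p .
\]

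The second step is an upper bound. The forms $\varphi_k(a,b)=\rho(\Phi(a,b)\xi_k)$ are positive sesquilinear forms on $\X$ by Lemma \ref{lem: rho}, exactly as in the proof of Proposition \ref{prop:Lp-CS}. Applying the scalar Cauchy--Schwarz inequality to each $\varphi_k$, then the Cauchy--Schwarz inequality in $\mathbb R^N$, then Hölder's inequality, gives
\[
\sum_k|\varphi_k(x,y)|\le\Big(\sum_k\varphi_k(x,x)\Big)^{1/2}\Big(\sum_k\varphi_k(y,y)\Big)^{1/2}
=\rho\big(\Phi(x,x)|T|^{p-1}\big)^{1/2}\rho\big(\Phi(y,y)|T|^{p-1}\big)^{1/2}.
\]
The right-hand side is at most $\|\Phi(x,x)\|_p^{1/2}\|\Phi(y,y)\|_p^{1/2}\|T\|_p^{p-1}$.

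Combining the two steps yields
\[
\cos(\pi/N)\,\|T\|_p^p\le\|\Phi(x,x)\|_p^{1/2}\|\Phi(y,y)\|_p^{1/2}\|T\|_p^{p-1}.
\]
Dividing by $\|T\|_p^{p-1}$ and letting $N\to\infty$ gives the claimed inequality.

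The main point needing care is the lower-bound step. One must check that the operators $T\xi_k$ lie in $L^1(\rho)$, which follows from Hölder's inequality since $T\in L^p(\rho)$ and $\xi_k\in L^q(\rho)$. One must also check that $\rho$ of a normal element of the form $\int f\,dE$ with $\Re f\ge c|\lambda|^p$ has real part at least $c\,\rho(|T|^p)$. For this I would write $\Re(e^{-i\theta_k}T\xi_k)-\cos(\pi/N)|T|^pE_k$ as a positive operator in $L^1(\rho)$. Normality makes all the operators involved commute, and boundedness of $T$ avoids any approximation argument of the kind used in the proof of Proposition \ref{prop:Lp-CS}.
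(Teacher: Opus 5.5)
Your proof is correct. It uses the same mechanism as the paper. Write $T=\Phi(x,y)=V|T|$. Normality lets the norming element $|T|^{p-1}V^*$ be split into mutually orthogonal pieces of almost constant phase. The scalar Cauchy--Schwarz inequalities for the pieces then recombine, via Cauchy--Schwarz in $\mathbb{R}^N$, into the single weight $|T|^{p-1}$ with $\||T|^{p-1}\|_q=\|T\|_p^{p-1}$, rather than into the four positive parts that cause the factor $2$ in Proposition \ref{prop:Lp-CS}.

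The execution differs, though. The paper first proves $|\rho(T\Pi)|\le\|\Phi(x,x)\|_p^{1/2}\|\Phi(y,y)\|_p^{1/2}\|\Pi\|_q$ for every simple $\Pi=\sum_j c_jP_j$ built from orthogonal finite projections; this bound does not use normality. It then approximates the whole function $z\mapsto\mathrm{sgn}(z)|z|^{p-1}$ uniformly by simple functions. The resulting operators $\sum_j c_jE(A_j)$ need not lie in $L^q(\rho)$, so the paper compresses them by the finite spectral projections $P_m$ of $|T|$ from Lemma \ref{projectors}. It then takes a double limit in $n$ and $m$, recovering $\|T\|_p^p$ exactly as $\rho(T|T|^{p-1}V^*)$.

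You instead keep the modulus $|T|^{p-1}$ intact and discretize only the phase. Your weights $\xi_k=|T|^{p-1}E_k$ already lie in $L^q(\rho)$ and sum exactly to $|T|^{p-1}$, so no truncation or convergence of traces is needed. The only loss is the explicit factor $\cos(\pi/N)$ from $\Re\bigl(e^{-i\theta_k}\lambda|\lambda|^{p-1}\bigr)\ge\cos(\pi/N)|\lambda|^p$ on $S_k$ (for $N\ge 2$).

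What this buys you:
\begin{itemize}
\item Your argument is shorter.
\item It does not need Lemma \ref{projectors}, which the paper states only for factors of type I or II.
\item You use the boundedness of $\Phi(x,y)$ only to keep the functional calculus elementary. The paper's operator-norm approximation of $|T|^{p-1}V^*$ genuinely relies on it.
\end{itemize}
The paper's route, in exchange, yields the intermediate estimate for arbitrary simple $\Pi$, which is of independent interest.

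A minor citation point: $\rho\ge 0$ on the positive element $\Re(e^{-i\theta_k}T\xi_k)-\cos(\pi/N)|T|^pE_k$ of $L^1(\rho)\cap L^\infty(\rho)$ is just positivity of the trace, not Lemma \ref{lem: rho}.
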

\begin{proof}
	If $x,y\in\X$ are such that  $\Phi(x,y)\in L^{p}(\rho)\cap L^{\infty}(\rho)$ and  $\Phi(x,y)=0$ the inequality holds true, so assume $\Phi(x,y)\neq 0$.
	Let $V_{x,y}$ denote the partial isometry in the polar decomposition of $\Phi(x,y)$
		\[
	\Phi(x,y)=V_{x,y}\,|\Phi(x,y)|.
	\]	
	Let	
	\[
	\Pi=\sum_{j=1}^{N} c_{j} P_{j}
	\]	
	be a simple operator-valued function in $L^q(\rho)\cap L^{\infty}(\rho)$, where $P_{j}$'s are pairwise orthogonal projections in $L^q(\rho)\cap L^\infty(\rho)$. Then,	
	\begin{align}\label{eq: pag 15}
	&|\rho(\Phi(x,y)\Pi)|
	\le
	\sum_{j=1}^{N} |c_{j}|\, |\rho(\Phi(x,y)P_{j})|\\
	\nonumber&\leq\sum_{j=1}^{N} |c_{j}|\, \left(\rho\left(\Phi(x,x) P_{j}\right)\right)^{1/2} \left(\rho\left(\Phi(y,y) P_{j}\right)\right)^{1/2} 
	    	\\\nonumber&
	\le\left(\sum_{j=1}^{N} |c_{j}|\rho\left(\Phi(x,x) P_{j}\right)\right)^{1/2}\left(\sum_{j=1}^{N} |c_{j}|\rho\left(\Phi(y,y) P_{j}\right)\right)^{1/2}\\\nonumber\nonumber&=\left(\rho\left(\Phi(x,x) \left(\sum_{j=1}^{N} |c_{j}|P_{j}\right)\right)\right)^{1/2}\left(\rho\left(\Phi(y,y) \left(\sum_{j=1}^{N} |c_{j}|P_{j}\right)\right)\right)^{1/2}\\\nonumber&=\left(\rho\left(\Phi(x,x) \left(|\Pi|\right)\right)\right)^{1/2}\left(\rho\left(\Phi(y,y) \left(|\Pi|\right)\right)\right)^{1/2}
	\end{align}
	where in the second inequality we have applied the Cauchy-Schwarz inequality to the sesquilinear forms $\varphi_j:\X\times\X\to \mathbb{C}$, $j\in\{1,\dots, N\}$, given by $$\varphi_j(x,y)=\rho(\Phi(x,y)P_j),\quad\forall x,y\in\X$$which are positive by the positivity of $\Phi$ and by Lemma \ref{lem: rho}, whereas in the last inequality we applied the Cauchy-Schwarz inequality  to the inner product in $\mathbb{R}^N$ and in the last equality the fact that $|\Pi|=\sum_{j=1}^{N} |c_{j}|P_{j}$ since $P_iP_j=0$ whenever $i\neq j$.
	
	Now, by the H\"older's inequality, we get

		\begin{align}
	    \label{eq: pag 16}
	\left(\rho\left(\Phi(x,x) \left(\Pi\right)\right)\right)^{1/2}&\left(\rho\left(\Phi(y,y) \left(\Pi\right)\right)\right)^{1/2}
	\\\nonumber&\le
	\|\Phi(x,x)\|_{p}^{1/2}\, \|\Phi(y,y)\|_{p}^{1/2}\,
	\|\Pi\|_{q}.
	\end{align}

	Now assume that $\Phi(x,y)$ is normal.  
	Then

	\[
	|\Phi(x,y)|^{p-1} V_{x,y}^*
	=
	\int_{\sigma(\Phi(x,y))} \mbox{sgn}(z)|z|^{p-1}\, dE(z)\, %V_{x,y},
	\]

	where $E$ is the spectral measure of $\Phi(x,y)$  %$|\Phi(x,y)|$.
	and $\mbox{sgn}:\sigma(\Phi(x,y))\to\mathbb{C}$ is given by $$\mbox{sgn}(z)=\begin{cases}\frac{z}{|z|}, \quad \mbox{if }z\neq0\\0, \quad \mbox{if }z=0.\end{cases}$$
	By Lemma \ref{projectors}, for each $m\in\mathbb{N}$ there exists a spectral projection $P_{m}$ of $|\Phi(x,y)|$ such that
		\[
	\||\Phi(x,y)|(I-P_{m})\|_{p} \leq \frac{1}{m}.
	\]	
	% By spectral functional calculus, since $P_{m}$ commutes with $V_{x,y}$ because $\Phi(x,y)$ is normal, we have

    Now, by Remark \ref{rem: proof lemma 4.8}, we have $P_m=F((\frac{1}{m}, \infty))$, where $F$ is the spectral measure corresponding to $|\Phi(x,y)|$. Since $t^p \chi_{[0,\frac{1}{m}]}(t)=(t\chi_{[0,\frac{1}{m}]}(t))^p$ for all $t\in\sigma(|\Phi(x,y)|)$, by spectral functional calculus we have
	\[
|\Phi(x,y)|^{p}  (I-P_{m})^p=\left(	|\Phi(x,y)|(I-P_{m})
	\right)^p
	\]
	and since $(I-P_{m})^p=I-P_{m}=(I-P_{m})^q$, we obtain:
	\[
	\||\Phi(x,y)|^{p-1}(I-P_{m})\|_{q}
	=
	\||\Phi(x,y)|(I-P_{m})\|_{p}^{p-1}
	< m^{-(p-1)}.
	\]

	Let $\{\Pi_{n}\}$ be a sequence of simple functions on $\sigma(\Phi(x,y))$ converging in supremum norm to the function	
	\[
	z\to \mbox{sgn}(z)|z|^{p-1},
	\]	
	then $\{\int_{\sigma(\Phi(x,y))}\Pi_{n}dE\}_{n}$ is a sequence of operator-valued functions converging to $|\Phi(x,y)|^{p-1}V_{x,y}^*$ in the operator norm. 
    	Let now for each $n$ consider $\{A_1,\dots,A_N\}$ a set of pairwise disjoint Borel subsets of $\sigma(\Phi(x,y))$ with $\sigma(\Phi(x,y))=\sum_{j=1}^NA_j$ such that $$\int_{\sigma(\Phi(x,y))}\Pi_ndE=\sum_{j=1}^Nc_jE(A_j)$$ for some scalars $c_1,\dots,c_n$
 (note that not only  $\{A_1,\dots,A_N\}$ but also $N$ and the scalars $\{c_1,\dots, c_N\}$    depend on $n$)	
	
	Now, $P_m$ mutually commutes with $E(A_j)$ for each $j$ since $P_m$ is a spectral projection corresponding to $|\Phi(x,y)|$ and $E$ is the spectral measure corresponding to $\Phi(x,y)$ which is normal and hence commutes with $|\Phi(x,y)|$.	
%	\[
%	\Pi_{n} P_{m} = \sum_{j=1}^{N_{n}} c_{j}^{(n)}\, E(A_{j}^{(n)}) P_{m}
%	\]	
	Therefore, since the product of two mutually commuting orthogonal  projections is an orthogonal projection too (see. e.g., \cite[Theorem 2.8.4]{Birman_Solomjak}) we get that, for every $n\in\mathbb{N}$ the operator  $\left(\int_{\sigma(\Phi(x,y))}\Pi_{n}dE\right)P_{m}$	is also a simple operator-valued function in $\M$ because 	$$\left(\int_{\sigma(\Phi(x,y))}\Pi_{n}dE\right)P_{m}=\sum_{j=1}^Nc_jE(A_j)P_m$$
	and $E(A_j)P_m$ is orthogonal projection for every $j\in\{1,\dots, N\}$; moreover, for all $i\neq j$, with $i,j\in\{1,\dots, N\}$,
    $$E(A_j)P_mE(A_i)P_m=P_mE(A_j)E(A_i)P_m=0.$$
Furthermore, $P_{m}$ commutes with $V_{x,y}^*$.  
Indeed, since  $\Phi(x,y)$ is normal then $V_{x,y}^*$ commutes with $|\Phi(x,y)|$  and since 
$P_{m}$ is a spectral projection corresponding to $|\Phi(x,y)|$, we must have
\[
P_{m} V_{x,y}^* = V_{x,y}^* P_{m}.
\]
Now observe that since $P_{m}$ is a finite projection (i.e.\ $\rho(P_{m})<\infty$) and hence $P_{m}\in L^q(\rho)$,
\begin{align*}
&\left\|\left(\int_{\sigma(\Phi(x,y))}\Pi_{n}dE\right)P_{m}- |\Phi(x,y)|^{p-1} V_{x,y}^*P_{m} \right\|_{q}\\&\leq \left\|\int_{\sigma(\Phi(x,y))}\Pi_{n}dE- |\Phi(x,y)|^{p-1} V_{x,y}^*\right\|_{\infty}\bigl\|P_{m} \bigr\|_{q}  \to 0,
\end{align*}
    Therefore,
\begin{align*}
\|\, |\Phi(x,y)|^{p-1} V_{x,y}^* (P_{m}-I) \|_{q}&
=
\|\, |\Phi(x,y)|^{p-1} (P_{m}-I) V_{x,y}^* \|_{q}\\&
\le
\|\, |\Phi(x,y)|^{p-1}(P_{m}-I) \|_{q}\|V_{x,y}^* \|_{\infty}\\
&\le
\|\, |\Phi(x,y)|^{p-1}(P_{m}-I) \|_{q}
< m^{-(p-1)}
\end{align*}
 as it has been shown earlier.

\medskip

Since $\left(\int_{\sigma(\Phi(x,y))}\Pi_{n}dE\right)P_{m}$ is a simple operator-valued function in
$L^{q}(\rho)\cap L^{\infty}(\rho)$ for each $n$  and $E(A_j)P_m\in L^q(\rho)$ for all $j\in\{1,\dots, N\}$, from \eqref{eq: pag 15} and \eqref{eq: pag 16}  we obtain

\begin{align}\label{eq: estimates}
&\left|\rho\left(\Phi(x,y)\, \left(\int_{\sigma(\Phi(x,y))}\Pi_{n}dE\right)P_{m}\right)\right|\\
&\nonumber\le
\|\Phi(x,x)\|_{p}^{1/2}\, \|\Phi(y,y)\|_{p}^{1/2}\,
\|\left(\int_{\sigma(\Phi(x,y))}\Pi_{n}dE\right)P_{m} \|_{q}.
\end{align}

Letting $n\to\infty$ on both sides of \eqref{eq: estimates} yields, for every $m\in\mathbb{N}$:
\begin{align*}
	&\left|\rho(\Phi(x,y)\, |\Phi(x,y)|^{p-1}V_{x,y}^* P_{m})\right|=\left|\rho(\Phi(x,y)\, |\Phi(x,y)|^{p-1}P_{m}V_{x,y}^* )\right|\\	&\le
	\|\Phi(x,x)\|_{p}^{1/2}\, \|\Phi(y,y)\|_{p}^{1/2}\,
	\||\Phi(x,y)|^{p-1}P_{m}V_{x,y}^* \|_{q}.
\end{align*}

Hence, letting $m\to\infty$, and using that \[ |\Phi(x,y)|^{p-1} P_{m}V_{x,y}^* \to |\Phi(x,y)|^{p-1} V_{x,y}^*, \quad \mbox{ in $L^{q}(\rho)$ as $m\to\infty$}\] we obtain

\begin{align*}
\left|\rho\left(|\Phi(x,y)|^p\right)\right|&=\left|\rho\left(\Phi(x,y)|\Phi(x,y)|^pV_{v-y}^*\right)\right|\\&
\le
\|\Phi(x,x)\|_{p}^{1/2}\, \|\Phi(y,y)\|_{p}^{1/2}\,
\||\Phi(x,y)|^{p-1} V_{x,y}^*\|_{q}.
\end{align*}

Since $\||\Phi(x,y)|^{p-1} V_{x,y}^*\|_{q}\leq \||\Phi(x,y)|^{p-1}\|_{q}=\||\Phi(x,y)|\|_{q}^{p-1}$, 
dividing both sides by $\|\Phi(x,y)\|_{p}^{p-1}$ gives
\[
\|\Phi(x,y)\|_{p}
\le
\|\Phi(x,x)\|_{p}^{1/2}\, \|\Phi(y,y)\|_{p}^{1/2}.
\]
This completes the proof.   
    \end{proof}

Motivated by \cite[Theorem 1.3.1 (ii)]{stormer} we give the following
\begin{cor}\label{cor_3.8}
Let $\A$ be a $*$-algebra and let $\omega:\A\to L^p(\rho)$ be a positive linear map.  If $\omega(y^{*}x)\in L^p(\rho)\cap L^\infty(\rho)$ and is normal for some $x,y\in \A$, then
$$
\|\omega(y^{*}x)\|_p \le \|\omega(x^{*}x)\|_p^{1/2}\,\|\omega(y^{*}y)\|_p^{1/2}.
$$
\end{cor}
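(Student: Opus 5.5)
The plan is to reduce Corollary \ref{cor_3.8} to Proposition \ref{prop: 3.7} by turning the positive linear map $\omega$ into a positive sesquilinear map on $\X=\A$. Concretely, I define
\[
\Phi(x,y):=\omega(y^{*}x),\qquad x,y\in\A .
\]
Three things need checking: that $\Phi$ takes values in $L^p(\rho)$, that it is sesquilinear, and that it is positive.

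The values lie in $L^p(\rho)$ because $\omega$ does. Linearity in $x$ follows from the linearity of $\omega$ and the distributivity of the product in $\A$. Conjugate linearity in $y$ follows from $(\alpha y)^{*}=\overline{\alpha}\,y^{*}$, since then $\Phi(x,\alpha y)=\omega(\overline{\alpha}y^{*}x)=\overline{\alpha}\,\Phi(x,y)$, and additivity in $y$ is handled the same way. Positivity is immediate: $\Phi(x,x)=\omega(x^{*}x)\ge 0$, because $x^{*}x$ is a positive element of the $*$-algebra $\A$ and $\omega$ maps positive elements to positive elements of $L^p(\rho)$. Here I take the positive elements of $\A$ to be sums of elements of the form $a^{*}a$, which is the standard convention for positivity of linear maps on a general $*$-algebra.

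With this in place, the hypothesis of Corollary \ref{cor_3.8} says exactly that $\Phi(x,y)=\omega(y^{*}x)$ lies in $L^p(\rho)\cap L^\infty(\rho)$ and is normal for the given $x,y$. Proposition \ref{prop: 3.7}, applied to this $\Phi$ on $\X=\A$ (with $p>1$, as there), then gives
\[
\|\omega(y^{*}x)\|_p=\|\Phi(x,y)\|_p\le \|\Phi(x,x)\|_p^{1/2}\|\Phi(y,y)\|_p^{1/2}
=\|\omega(x^{*}x)\|_p^{1/2}\,\|\omega(y^{*}y)\|_p^{1/2}.
\]

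There is no genuine obstacle in this argument. The only point needing care is the convention behind "positive linear map" on a $*$-algebra: one must confirm that $\omega(x^{*}x)\ge 0$ for every $x\in\A$, which holds under the standard definition above. All the analytic work, including the normality-based spectral approximation, is already done in Proposition \ref{prop: 3.7}.
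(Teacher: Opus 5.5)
Your proposal is correct and is essentially the paper's argument: set $\Phi_\omega(x,y)=\omega(y^{*}x)$, observe that it is a positive sesquilinear map, and apply the Cauchy--Schwarz inequality for normal values. The paper's proof text cites Proposition~\ref{prop: num rad norm-CS} at this point, which is evidently a slip; Proposition~\ref{prop: 3.7}, the one you apply, is the intended reference.
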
\begin{proof} For $a,b\in \A$ define $\Phi_{ \omega}(x,y)=\omega(y^{*}x)$ and apply Proposition \ref{prop: num rad norm-CS} to the positive sesquilinear map $\Phi_{\omega}$.
\end{proof}
\begin{rem}\label{rem_3.9}
    Note that if $\A$ is a $C^*$-algebra with unit $\id$ and $\|\omega\|\leq1$, then for  all $a\in\A$ such that $\omega(a)\in L^p(\rho)\cap L^\infty(\rho)$ and $\omega(a)$ is normal, $$\|\omega(a)\|_p^2\leq\|\omega(a^*a)\|_p\|\omega(\id)\|_p\leq\|\omega(a^*a)\|_p^2,$$ hence we obtain certain link to Kadison-Schwarz inequality for normal elements (see, e.g., \cite[Theorem 1.3.1 (ii)]{stormer}). In fact, Corollary \ref{cor_3.8} can be viewed as an {\em opposite or symmentric version} of \cite[Theorem 1.3.1 (ii)]{stormer}, in the setting of noncommutative $L^p$-spaces, since in \cite[Theorem 1.3.1 (ii)]{stormer} it is assumed that $a$ is a normal element, whereas we assume that $\omega(a)$ is normal in Corollary \ref{cor_3.8}.% $\omega(a)=\omega(\id^*a)$ and $\omega(\id^*a)$ is assumed to be normal
\end{rem}

\subsection{The case of positive sesquilinear maps with values in ordered Banach bimodules over *-algebras}
Recall that $L^p(\rho)$ is in fact a Banach bimodule over the *-algebra $L^\infty(\rho)$. Motivated by the generalized Cauchy-Schwarz inequality for positive $L^p(\rho)$-valued sesquilinear maps, in this subsection we will study Cauchy-Schwarz inequality for positive sesquilinear maps with values in a particular class of ordered Banach bimodules over *-algebras.

We will assume that $ \YY$ is (an ordered) Banach bimodule over a  *-algebra $\YY_0$  (with $\YY_0$ equipped with a  not necessarily sub-multiplicative norm $\|\cdot\|_{\YY_0}$) with the respective cones $\KK$ and $\KK_0$, with $$\KK_0=\left\{\sum_{i=1}^Nz_i^*z_i, z_i\in\YY_0, i=1, \dots, N;  N\in\mathbb{N}\right\},$$ satisfying the following conditions:
\begin{itemize} \item[$(C1)$] Every $z \in \YY_0 $ can be written as $z=z_1-z_2+i(z_3-z_4)$, where $z_j \in \KK_0$  and $\|z_1-z_2 \|_{\YY_0}=\|z_1+z_2 \|_{\YY_0} \leq \|z \|_{\YY_0}$ and $\|z_3-z_4 \|_{\YY_0}=\|z_3+z_4 \|_{\YY_0}\leq \|z \|_{\YY_0}$.
	 
\item[$(C2)$] There exists a family $\mathcal{F} $ of sesquilinear forms $\varphi:\YY \times \YY_{0} \to\mathbb{C}$ enjoying the following properties:\begin{itemize}
	\item[i)] For every $y^+ \in \KK $ and $z^+ \in \KK_0 $, $ \varphi( y^+, z^+ ) \geq 0$ for all $\varphi \in \mathcal{F} .$
	\item[ii)]  For every $y \in \YY $ and $z \in \YY_0 $, $$|\varphi(y,z) |\leq \|y \|_{\YY} \|z \|_{\YY_0}, \quad \forall\varphi\in \mathcal{F} .$$
	\item[iii)]  For every $y \in \YY$,  the norm in $\YY$
	$$\|y\|_\YY= \sup_{\substack{z \in \YY_0 \\ \|z \|_{\YY_0}\leq 1}}  \sup_{\varphi\in \mathcal{F}} |\varphi(y,z)|.$$\end{itemize}
\end{itemize}	 
	Under these assumptions, the following proposition holds.\\

	\begin{prop}\label{prop_HilbMod}	Let $\X$ be a complex vector space,  $\mathfrak{Y}$  an ordered Banach bimodule over the  *-algebra $\mathfrak{Y}_0$ with positive closed cone $\KK$, satisfying $(C1)-(C2)$. Let $\Phi : \X \times \X \to \YY$ be a positive  sesquilinear map. %{ Suppose that $\YY$ satisfies  the following additional condition:
%			% \begin{itemize}
%				% \item[(P)]  $\ip{y}{x}\geq0$   for each $x\in \KK$ and $y\in \KK\cap\D$.
%				% \end{itemize}} 
		Then, for all $x_1,x_2 \in \X $ we have that 
		$$\|\Phi (x_1,x_2) \|_\YY  \leq 2 \|\Phi (x_1,x_1) \|_\YY^{1/2} \|\Phi (x_2,x_2) \|_\YY^{1/2} .$$ 
	\end{prop}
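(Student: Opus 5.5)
The plan is to transplant the proof of Proposition \ref{prop:Lp-CS} into the abstract setting, letting condition $(C1)$ play the role of the decomposition of $|z_n|^{p-1}u_n^*$ into four positive parts. The family $\mathcal F$ from $(C2)$ replaces the trace pairing. Fix $x_1,x_2\in\X$; if $\Phi(x_1,x_2)=0$ there is nothing to prove. By $(C2)$ iii), for every $\varepsilon>0$ there are $z\in\YY_0$ with $\|z\|_{\YY_0}\le 1$ and $\varphi\in\mathcal F$ such that
\[
\|\Phi(x_1,x_2)\|_\YY\le |\varphi(\Phi(x_1,x_2),z)|+\varepsilon .
\]
Thus it suffices to bound $|\varphi(\Phi(x_1,x_2),z)|$ uniformly in $\varphi$ and in $z$ with $\|z\|_{\YY_0}\le1$.

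Next apply $(C1)$ and write $z=z_1-z_2+i(z_3-z_4)$ with $z_j\in\KK_0$. Sesquilinearity of $\varphi$ gives
\[
|\varphi(\Phi(x_1,x_2),z)|\le\sum_{j=1}^4|\varphi(\Phi(x_1,x_2),z_j)|.
\]
For each $j$ define the scalar form $\psi_j(a,b)=\varphi(\Phi(a,b),z_j)$ on $\X$. It is sesquilinear, since $\Phi$ is sesquilinear and $\varphi$ is linear in its first slot. It is positive by $(C2)$ i), because $\Phi(a,a)\in\KK$ and $z_j\in\KK_0$. The scalar Cauchy--Schwarz inequality then gives $|\psi_j(x_1,x_2)|\le\psi_j(x_1,x_1)^{1/2}\psi_j(x_2,x_2)^{1/2}$. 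The Cauchy--Schwarz inequality in $\mathbb R^4$, followed by additivity of $\varphi$ in the second variable, yields
\[
\sum_{j}|\psi_j(x_1,x_2)|\le \varphi\Bigl(\Phi(x_1,x_1),\sum_j z_j\Bigr)^{1/2}\varphi\Bigl(\Phi(x_2,x_2),\sum_j z_j\Bigr)^{1/2}.
\]

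By $(C2)$ ii), each factor is bounded by $\|\Phi(x_i,x_i)\|_\YY^{1/2}\|\sum_j z_j\|_{\YY_0}^{1/2}$. Then
\[
\Bigl\|\sum_j z_j\Bigr\|_{\YY_0}\le\|z_1+z_2\|_{\YY_0}+\|z_3+z_4\|_{\YY_0}\le 2\|z\|_{\YY_0}\le 2,
\]
again by $(C1)$. Combining these bounds gives $|\varphi(\Phi(x_1,x_2),z)|\le 2\|\Phi(x_1,x_1)\|_\YY^{1/2}\|\Phi(x_2,x_2)\|_\YY^{1/2}$. Taking suprema over $z$ and $\varphi$, or equivalently letting $\varepsilon\to0$, gives the claim.

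The only delicate points are bookkeeping ones. First, one must check that $\psi_j$ is conjugate linear in the right variable, which is where the convention that $\varphi$ is sesquilinear on $\YY\times\YY_0$ enters: only linearity in the first slot is used for $\psi_j$. Second, one must check that a positive scalar form is automatically hermitian, which holds over $\mathbb C$ by polarization, so that the scalar Cauchy--Schwarz inequality applies. No approximation step is needed here, in contrast to Proposition \ref{prop:Lp-CS}, because the norming elements $z$ already lie in $\YY_0$.
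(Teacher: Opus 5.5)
Your proof is correct and follows essentially the same route as the paper: you decompose $z$ via $(C1)$, apply the scalar Cauchy--Schwarz inequality to the positive forms $(a,b)\mapsto\varphi(\Phi(a,b),z_j)$, combine with Cauchy--Schwarz in $\mathbb{R}^4$, $(C2)$ ii) and $\|z_1+z_2\|_{\YY_0}+\|z_3+z_4\|_{\YY_0}\le 2\|z\|_{\YY_0}$, and conclude with $(C2)$ iii). You also rely only on the norm bounds that $(C1)$ actually provides, not on the individual bounds $\|z_j\|_{\YY_0}\le\|z\|_{\YY_0}$ that the paper asserts in passing but never needs.
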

	
	\begin{proof}Let $ \mathcal{F}$ be the family in $(C2)$. 
		For each fixed $\varphi \in \mathcal{F} $ and $z^+ \in \KK_0$, let $\widetilde{\varphi}_{z^+} $ be the sesquilinear form on $\X \times \X$ given by $$\widetilde{\varphi}_{z^+} (x_1,x_2)=\varphi (\Phi (x_1, x_2),z^+ ),\quad\forall  x_1,x_2 \in \X.$$ Since $ \Phi$ is positive, by  $(C2)$ $i)$, we obtain that $ \widetilde{\varphi}_{z^+} $ is positive. Hence, by  $(C2)$ $ii)$, for all $x_1,x_2 \in \X $ we get that 
		  \begin{equation}\label{eq: C2} | \widetilde{\varphi}_{z^+} ( x_1,x_1)|  | \widetilde{\varphi}_{z^+} ( x_2,x_2)| \leq \|\Phi ( x_1,x_1) \|_\YY \| \Phi ( x_2,x_2) \|_\YY \|z^+ \|_{\YY_0}^{2} .\end{equation}
		Now, if we choose  $z\in \YY_0$, by  assumption, we can find $z_1^+, z_2^+,z_3^+, z_4^+ \in \KK_0$ such that $z=z_1^+ - z_2^+ +i (z_3^+ - z_4^+) $ and $\|z_j^+ \|_{\YY_0} \leq \|z\|_{\YY_0} $ for all $j \in \{1, ... , 4\} .$ Hence,  by the Cauchy-Schwarz inequality applied to the positive sesquilinear forms $\widetilde{\varphi}_{z_j^+}$ and by \eqref{eq: C2},  it is
		\begin{align*}
  | \varphi (\Phi (x_1, x_2), z) | &\leq \sum_{j=1}^{4} | \varphi (\Phi (x_1, x_2), z_j^+) | \\ 
      &\leq \sum_{j=1}^{4}  \left(\varphi (\Phi (x_1, x_1), z_j^+)\right)^{1/2} \left(\varphi (\Phi (x_2, x_2), z_j^+)\right)^{1/2} \\  &\leq \left(\sum_{j=1}^{4}  \varphi (\Phi (x_1, x_1), z_j^+)\right)^{1/2} \left(\sum_{j=1}^{4}\varphi (\Phi (x_2, x_2), z_j^+)\right)^{1/2}\\ 
      &\leq  \|\Phi (x_1,x_1) \|_\YY^{1/2} \|\Phi (x_2,x_2) \|_\YY^{1/2}(\|z_1^++z_2^+\|_{\YY_0}+\|z_3^+ +z_4^+\|_{\YY_0})\\
      &\leq 2  \|\Phi (x_1,x_1) \|_\YY^{1/2} \|\Phi (x_2,x_2) \|_\YY^{1/2} \|z \|_{\YY_0} 
		    \end{align*}       
		for all $x_1,x_2 \in \X $ and all $\varphi \in \mathcal{F}$, where the third inequality is due to the Cauchy-Schwarz applied to the inner product in $\mathbb{R}^4$. Therefore, for every $z \in \YY_0 $, with $\|z  \|_{\YY_0} \leq 1 $ and all $x_1 , x_2 \in \X$, we have 
		$$\sup_{\varphi \in \mathcal{F}} | \varphi ( \Phi (x_1,x_2),z) | \leq 2 \|\Phi (x_1,x_1) \|_\YY^{1/2} \|\Phi (x_2,x_2) \|_\YY^{1/2} .$$ 
		However, by $(C2)$ $iii)$, we deduce that for all $ x_1, x_2 \in \X$
		\begin{align*}
			\|\Phi (x_1,x_2) \|_\YY &= \sup_{\substack{z \in \YY_0 \\ \|z \|_{\YY_0}\leq 1}}  \sup_{\varphi \in \mathcal{F}} | \varphi ( \Phi(x_1,x_2),z)| \\  &\leq 2 \|\Phi (x_1,x_1) \|_\YY^{1/2} \|\Phi (x_2,x_2) \|_\YY^{1/2}.		\end{align*} 		
	\end{proof} 
\begin{ex}\label{ex: 3.9}
Let $\YY_0$ be a unital $C^*$-algebra and $\YY$ be its dual. If $\phi\in\YY$ and $z\in\YY_0$, the multiplication is defined as $$(\phi\cdot z)(w)=\phi(zw) \mbox{ and } (z\cdot \phi)(w)=\phi(wz), \forall w\in\YY_0.$$ Let $\mathfrak{K}_0$ be the natural cone in $\YY_0$  consisting of the positive elements in $\YY_0$ and the cone $\mathfrak{K}$ in $\YY$   be the dual cone i.e.,  the set consisting of all $\phi\in\YY$ such that $\phi(z)\geq0$ whenever $z\in \mathfrak{K}_0$. We can take as 
		% Let $\YY=M(\Omega)$, $\YY_0=C(\Omega)$, $\KK$ be the natural cone of positive Radon measures on $\Omega$, $\KK_0=\{f \in C(\Omega) |f \geq 0 \}$ and 
  $\mathcal{F} $ the set consisting of the single sesquilinear form $\varphi: \YY \times \YY_0\to\mathbb{C}$ given by 
		$$\varphi (\phi,z) = \phi(z^*).$$
	\end{ex}

% \section{Operator-valued case: maps into $\B(\H)$}
% classical numerical radius norm and related C-S
\section{Numerical radii norms
and Cauchy-Schwarz inequalities}
\subsection{Numerical Radius Norm on \(\B(\H)\) and Related Cauchy--Schwarz Inequalities}

Recall that the numerical radius norm on \(\B(\H)\), which we will denote  \(\|\cdot\|_{n.r.}\), is defined by

\[
\|T\|_{n.r.}=\sup_{h\in\H, \,\|h\|=1} |\langle Th, h\rangle|.
\]
For every $T\in\B(\H)$, $$
\frac{1}{2}\|T\|\leq \|T\|_{n.r.}\leq \|T\|$$ and in general inequalities are strict. 

It is easy to see that every positive \(\B(\H)\)-valued sesquilinear map satisfies the Cauchy--Schwarz inequality.  
Indeed, if

\[
\Phi : \X\times \X \longrightarrow \B(\H)
\]

is positive and sesquilinear, then for each \(h\in \H\) with \(\|h\|=1\), the map

\[
\varphi_h(x,y)=\langle \Phi(x,y)h, h\rangle, \qquad x,y\in \X,
\]

is a positive sesquilinear form.  
Applying the scalar Cauchy--Schwarz inequality to \(\varphi_h\), we obtain

\begin{align*}
    |\langle \Phi(x,y)h,h\rangle|
   &\leq \langle \Phi(x,x)h,h\rangle^{1/2}
        \langle \Phi(y,y)h,h\rangle^{1/2}\\&\leq \|\Phi(x,x)\|_{n.r.}^{1/2}
        \|\Phi(y,y)\|_{n.r.}^{1/2},
        \qquad \forall x,y\in \X.
\end{align*}

Taking the supremum over all unit vectors \(h\in \H\), we deduce

\[
\|\Phi(x,y)\|_{n.r.}
   \leq \|\Phi(x,x)\|_{n.r.}^{1/2}
        \|\Phi(y,y)\|_{n.r.}^{1/2},
        \qquad \forall x,y\in \X.
\]

\bigskip

Let now \(\M\) be a von Neumann algebra and consider the space

\[
\B\bigl(\M,(\B(\H),\|\cdot\|_{n.r.})\bigr)
\]

of all bounded linear operators from \(\M\) into \((\B(\H),\|\cdot\|_{n.r.})\), equipped with the operator norm.  
This space has an obvious cone of positive elements consisting of all those operators that map positive elements of \(\M\) into positive elements of \(\B(\H)\).

%We have the following proposition.
Now we will prove the Cauchy-Schwarz inequality for positive sesquilinear maps with values in \(\B\bigl(\M,(\B(\H),\|\cdot\|_{n.r.})\bigr)\). The proof is motivated by the one of \cite[Proposition 3.1]{BIvT2}.
\begin{prop}\label{prop: num rad norm-CS}
Let \(\Phi : \X\times \X \to \B\bigl(\M,(\B(\H),\|\cdot\|_{n.r.})\bigr)\) be a positive sesquilinear map.  
Then for all \(x,y\in \X\),

\[
\|\Phi(x,y)\|_{n.r.}\leq \|\Phi(x,x)\|_{n.r.}^{1/2}\,
                         \|\Phi(y,y)\|_{n.r.}^{1/2}.
\]

\end{prop}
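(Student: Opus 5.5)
The plan is to reduce to the scalar Cauchy--Schwarz inequality, as in the \(\B(\H)\)-valued argument given just before the statement. The norm on \(\B\bigl(\M,(\B(\H),\|\cdot\|_{n.r.})\bigr)\) is the operator norm, so for \(T\) in this space
\[
\|T\|_{n.r.}=\sup_{\|a\|\le 1}\ \sup_{\|h\|=1}|\langle T(a)h,h\rangle| .
\]
The first step is to show that the supremum over the unit ball of \(\M\) can be replaced by a supremum over positive contractions, at the cost of no constant. We cannot simply decompose \(a=a_1-a_2+i(a_3-a_4)\), since that would reintroduce a factor \(2\). The idea, following \cite[Proposition 3.1]{BIvT2}, is instead to use that \(\M\) is a von Neumann algebra. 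Each element of the unit ball of \(\M\) is a norm limit of convex combinations of unitaries (Russo--Dye, or Kadison--Pedersen in the von Neumann setting). For unitaries, the spectral theorem in \(\M\) lets us approximate in norm by \(u=\sum_j \lambda_j E_j\), where the \(E_j\) are pairwise orthogonal projections of \(\M\) summing to \(I\) and \(|\lambda_j|=1\). Since each \(T\) is bounded and linear, the supremum defining \(\|T\|_{n.r.}\) can be computed over such simple unitaries.

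Second step: fix \(h\in\H\) with \(\|h\|=1\) and a finite family of orthogonal projections \(E_j\in\M\) with \(\sum_j E_j=I\). For each \(j\), define
\[
\varphi_j(x,y)=\langle \Phi(x,y)(E_j)h,h\rangle .
\]
Each \(\Phi(x,x)\) is a positive operator and \(E_j\ge 0\), so \(\varphi_j\) is a positive sesquilinear form on \(\X\). Applying the scalar Cauchy--Schwarz inequality to each \(\varphi_j\), and then Cauchy--Schwarz in \(\mathbb{R}^N\) exactly as in \eqref{eq: pag 15}, we get
\[
\Bigl|\sum_j\lambda_j\varphi_j(x,y)\Bigr|\le\Bigl(\sum_j\varphi_j(x,x)\Bigr)^{1/2}\Bigl(\sum_j\varphi_j(y,y)\Bigr)^{1/2}
=\langle\Phi(x,x)(I)h,h\rangle^{1/2}\langle\Phi(y,y)(I)h,h\rangle^{1/2}.
\]

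Third step: the right-hand side is bounded by \(\|\Phi(x,x)\|_{n.r.}^{1/2}\|\Phi(y,y)\|_{n.r.}^{1/2}\), because \(\|I\|=1\). Taking the supremum over \(h\), over the simple unitaries, and then over convex combinations gives \(\|\Phi(x,y)\|_{n.r.}\) on the left; convex combinations do not increase the bound, by the triangle inequality.

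The main obstacle is the first step, namely justifying that the norm is attained, up to approximation, on unitaries of the form \(\sum\lambda_jE_j\) without any loss of constant. I would use the Russo--Dye theorem together with norm continuity of \(\Phi(x,y)\) as a map \(\M\to(\B(\H),\|\cdot\|_{n.r.})\), and norm approximation of unitaries by their spectral sums inside \(\M\).
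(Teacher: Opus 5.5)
Your proposal is correct and is essentially the paper's proof. The paper likewise applies the scalar Cauchy--Schwarz inequality to $\varphi_j(x,y)=\langle\Phi(x,y)(P_j)h,h\rangle$ for orthogonal projections $P_j\in\M$ summing to $I$, combines these via Cauchy--Schwarz in $\mathbb{R}^N$ to reach $\langle\Phi(x,x)(I)h,h\rangle$, and then passes to unitaries by norm approximation and to the unit ball by convex combinations of unitaries (Russo--Dye). The only cosmetic difference is that you restrict to simple unitaries $\sum_j\lambda_jE_j$ with $|\lambda_j|=1$, whereas the paper works with an arbitrary simple $\Pi=\sum_j c_jP_j$ and carries the factor $\|\Pi\|_\M$ through.
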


\begin{proof}
Let \(\Pi=\sum_{j=1}^{N} c_{j} P_{j}\) be a simple operator-valued function in \(\M\), where $P_j$ are mutually orthogonal projections and \(\sum_{j} P_{j} = I\).  
For each \(h\in\H\) with \(\|h\|=1\), we have

\begin{align*}
    \left| \left\langle \Phi(x,y)(\Pi)h, h\right\rangle \right|
   &= \left| \sum_{j=1}^{N} c_{j}
        \left\langle \Phi(x,y)(P_{j})h, h\right\rangle \right|
  \\& \leq \sum_{j=1}^{N} |c_{j}|
        \left| \left\langle \Phi(x,y)(P_{j})h, h\right\rangle \right|\\&\leq \|\Pi\|_\M\sum_{j=1}^{N} 
        \left| \left\langle \Phi(x,y)(P_{j})h, h\right\rangle \right|,\quad\forall x,y\in \X.
\end{align*}

For each \(j\in\{1,\dots, N\}\), we apply the scalar Cauchy--Schwarz inequality to the positive sesquilinear form on $\X$
\[
\varphi_{j}(x,y)=\langle \Phi(x,y)(P_{j})h, h\rangle,\quad x,y\in \X.
\]

We obtain

\begin{align*}
 &\|\Pi\|_\M\sum_{j=1}^{N} 
        \left| \left\langle \Phi(x,y)(P_{j})h, h\right\rangle \right|\\&\leq \|\Pi\|_\M\sum_{j=1}^{N}
   \langle \Phi(x,x)(P_{j})h,h\rangle^{1/2}
   \langle \Phi(y,y)(P_{j})h,h\rangle^{1/2}\\&\leq \|\Pi\|_\M\left(\sum_{j=1}^{N}
   \langle \Phi(x,x)(P_{j})h,h\rangle\right)^{1/2}
   \left(\sum_{j=1}^{N}\langle \Phi(y,y)(P_{j})h,h\rangle\right)^{1/2}
   ,\,\,\forall x,y\in \X.
\end{align*}
the last inequality is due to the Cauchy-Schwarz inequality for the inner product in $\mathbb{R}^N$.

Summing over \(j\) and using \(\sum_{j} P_{j} = I\), we get $$\sum_{j=1}^{N}\langle \Phi(a,a)(P_{j})h,h\rangle=\langle \Phi(a,a)(I)h,h\rangle\leq\|\Phi(a,a)(I)\|_{n.r.}^{1/2}\leq\|\Phi(a,a)\|^{1/2},$$
for any $a\in\X$ and hence

\[
\left| \langle \Phi(x,y)(\Pi)h,h\rangle \right|
   \leq \|\Pi\|_\M
  \|\Phi(x,x)\|^{1/2}
   \|\Phi(y,y)\|^{1/2},\quad \forall x,y\in \X.
\]
Taking the supremum over all unit vectors \(h\in\H\), we obtain
\begin{equation}
    \label{eq: CS 1}\|\Phi(x,y)(\Pi)\|_{n.r.}
   \leq\|\Pi\|_\M \|\Phi(x,x)\|^{1/2}
        \|\Phi(y,y)\|^{1/2},\quad \forall x,y\in \X.
\end{equation}
Now choose a sequence \(\{\Pi_{n}\}\) of simple operator-valued functions converging uniformly to a unitary \(U\in \M\).  For every $\Pi_n$ \eqref{eq: CS 1} holds true, then passing to the limit as $n\to\infty$, yields
\[
\|\Phi(x,y)(U)\|_{n.r.}
   \leq \|\Phi(x,x)\|^{1/2}
        \|\Phi(y,y)\|^{1/2},\quad \forall x,y\in \X.
\]
Since any convex combination $V$ of unitaries satisfies the same inequality, and every \(T\in \M\) with \(\|T\|_\M\leq 1\) is a uniform limit of a sequence $\{V_n\}$ of such convex combinations, we conclude that
\[
\|\Phi(x,y)(T)\|_{n.r.}
   \leq \|\Phi(x,x)\|^{1/2}
        \|\Phi(y,y)\|^{1/2},
        \qquad \|T\|_\M\leq 1.
\]
Finally, taking the supremum over all \(T\in \M\) with \(\|T\|\leq 1\), we obtain the desired inequality.
\end{proof}
\begin{rem}\label{rem_4.2}
    Since $\|\cdot\|_{n.r.}\leq \|\cdot\|$ on $\B(\H)$, it is $$\B\bigl(\M,(\B
    (\H),\|\cdot\|)\bigr)\subseteq\B\bigl(\M,(\B
    (\H),\|\cdot\|_{n.r.})\bigr).$$ Now, a von Neumann algebra $\M$ can be isometrically embedded into $(\B
    (\H),\|\cdot\|)$ for a suitable Hilbert space $\H$, hence, by the above, we get $$\B(\M)\subseteq \B\bigl(\M,(\B
    (\H),\|\cdot\|_{n.r.})\bigr)$$for a suitable Hilbert space. As we will see in Section \ref{sec: appl} (Corollary \ref{moreGNS alg} and Remark \ref{rem_5.10}), by Proposition \ref{prop: num rad norm-CS}, we will be able to represent in a $B_\YY$-space every positive linear map $\omega:\A\to \B\bigl(\M,(\B
    (\H),\|\cdot\|_{n.r.})\bigr)$ on a unital *-algebra $\A$. However, by the above it follows that every positive linear map $\Omega:\A\to \B(\M)$ can thus be represented in a $B_\YY$-space. Finally, we notice that the proof of Proposition \ref{prop: num rad norm-CS} can easily be extended to the case when $(\B(\H),  \| \cdot \|_{n.r.} ) $ is replaced by a unital $C^*$-algebra $ \A $ equipped with the numerical radius norm $\nu$ defined as $$ \nu(a) = \sup \lbrace |  \theta (a) | : \theta \text{ is a state on }  \A \rbrace,\quad  a \in \A. $$
    The proof is the same as that of Proposition \ref{prop: num rad norm-CS} 
by letting states on $ \A$ play the role of the functionals $$ T \rightarrow \langle Th, h \rangle,\quad   h \in \H,\,  \| h \| = 1,\, T \in \B(\H).$$
\end{rem}

Let $\B\bigl(\M,\B(\M,(\B(\H),\|\cdot\|_{n.r.}))\bigr)$ denote the space of all bounded linear operators from \(\M\) into \(\B(\M,(\B(\H),\|\cdot\|_{n.r.}))\), equipped with the operator norm.
%%%%%%%%%%%%%%%%%da qui

% To be inserted right after the proof of Proposition 4.1
By utilizing the same techniques and procedure as in the proof of Proposition \ref{prop: num rad norm-CS}, we are now able to provide the following

\begin{cor}
Let $\Phi : \X \times \X \to \B\bigl(\M,\B(\M,(\B(\H),\|\cdot\|_{n.r.}))\bigr)$ be a positive sesquilinear map. Then, for all $x,y \in \X$ it holds that
\[
\|\Phi(x,y)\| \le \|\Phi(x,x)\|^{1/2} \, \|\Phi(y,y)\|^{1/2}.
\]
\end{cor}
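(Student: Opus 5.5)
The plan is to reduce the statement to Proposition \ref{prop: num rad norm-CS} by evaluating at a fixed element of $\M$, and then to repeat the simple-function and unitary approximation argument used there. Fix $x,y\in\X$.

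For each $S\in\M$ with $S\geq 0$, define
\[
\Phi_S(a,b):=\Phi(a,b)(S)\in \B(\M,(\B(\H),\|\cdot\|_{n.r.})),\qquad a,b\in\X .
\]
This map is sesquilinear. It is also positive in the sense of the cone of $\B(\M,(\B(\H),\|\cdot\|_{n.r.}))$, because $\Phi(a,a)$ sends positive elements of $\M$ to positive elements of that space. The cone on the iterated space is the natural one, consisting of operators mapping $\M^+$ into the positive cone of $\B(\M,(\B(\H),\|\cdot\|_{n.r.}))$.

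Take unit vectors $h\in\H$ and positive $S,R\in\M$. Then
\[
\varphi_{S,R,h}(a,b)=\langle \Phi(a,b)(S)(R)h,h\rangle
\]
is a positive scalar sesquilinear form on $\X$. The basic estimate will come from writing $T=\sum_j c_jP_j$ and $\Pi=\sum_k d_kQ_k$ as simple operator-valued functions with $\sum_j P_j=\sum_k Q_k=I$. One expands $\langle\Phi(x,y)(T)(\Pi)h,h\rangle$ as a double sum over $(j,k)$. Then one applies the scalar Cauchy--Schwarz inequality to each $\varphi_{P_j,Q_k,h}$, followed by Cauchy--Schwarz in $\mathbb{R}^{NM}$. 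This gives
\[
|\langle\Phi(x,y)(T)(\Pi)h,h\rangle|\le \|T\|_\M\|\Pi\|_\M\,\langle\Phi(x,x)(I)(I)h,h\rangle^{1/2}\langle\Phi(y,y)(I)(I)h,h\rangle^{1/2}.
\]
The sums over $j,k$ collapse because $\sum_{j,k}\Phi(a,a)(P_j)(Q_k)=\Phi(a,a)(I)(I)$. The right-hand side is at most $\|T\|_\M\|\Pi\|_\M\|\Phi(x,x)\|^{1/2}\|\Phi(y,y)\|^{1/2}$. Here I use $\langle A h,h\rangle\le \|A\|_{n.r.}$ and note that the iterated operator norm dominates $\|\Phi(a,a)(I)(I)\|_{n.r.}$.

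Taking the supremum over $h$ gives the same bound for $\|\Phi(x,y)(T)(\Pi)\|_{n.r.}$. Next I pass to limits in each variable separately, exactly as in Proposition \ref{prop: num rad norm-CS}. Uniform limits of simple functions give unitaries $U,W\in\M$. Convex combinations of unitaries preserve the bound by the triangle inequality and linearity in each slot. By Russo--Dye, every element of the closed unit ball of $\M$ is a norm limit of such combinations. Continuity of $\Phi(x,y)$ and of $\Phi(x,y)(T)$ in norm justifies these limits.

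Taking the supremum over $\|\Pi\|\le1$ gives $\|\Phi(x,y)(T)\|\le\|\Phi(x,x)\|^{1/2}\|\Phi(y,y)\|^{1/2}$ for $\|T\|\le 1$. A further supremum over $T$ gives the claim.

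The main obstacle I expect is the positivity step. I need $\varphi_{P_j,Q_k,h}$ to be positive, that is, $\Phi(a,a)(P_j)(Q_k)\ge0$ as an operator. This follows from the iterated cone definition, since $P_j\ge0$ forces $\Phi(a,a)(P_j)$ to be a positive map, which then sends $Q_k\ge0$ to a positive operator. I will state this definition explicitly, since the paper only defines the cone one level down.
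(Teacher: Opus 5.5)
Your proposal is correct and follows essentially the same route as the paper: you bound the simple-function case via the scalar Cauchy--Schwarz inequality for $\langle\Phi(\cdot,\cdot)(P_j)(Q_k)h,h\rangle$ together with Cauchy--Schwarz in $\mathbb{R}^{NM}$, and then pass to unitaries, convex combinations and Russo--Dye in each variable. Your explicit use of the iterated cone, which gives $\Phi(a,a)(P_j)(Q_k)\geq 0$, supplies a point the paper leaves implicit, while the preliminary maps $\Phi_S$ are never used and can be dropped.
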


\begin{proof}
Let $\Pi = \sum_{i=1}^N c_iP_i$ and $\Theta = \sum_{j=1}^M d_jQ_j$ be two simple operator-valued functions in $\M$. For each $h \in \H$ with $\|h\| = 1$, we have

\begin{align*}
    &\bigl|\langle \Phi(x,y)(\Pi)(\Theta)h, h \rangle\bigr|
\le \|\Pi\|_\M\|\Theta\|_\M\sum_{i,j} \bigl|\langle \Phi(x,y)(P_i)(Q_j)h, h \rangle\bigr|\\&\le \|\Pi\|_\M\|\Theta\|_\M\left(\langle \Phi(x,x)(I_\M)(I_\M)h, h \rangle\right)^{1/2}\left(\langle \Phi(y,y)(I_\M)(I_\M)h, h \rangle\right)^{1/2}\\&\leq\|\Pi\|_\M\|\Theta\|_\M\|\Phi(x,x)\|^{1/2}\|\Phi(y,y)\|^{1/2}.
\end{align*}

By the same arguments as in the proof of Proposition \ref{prop: num rad norm-CS}, we can deduce that
\[
\|\Phi(x,y)(\Pi)\| \le \|\Pi\|_\M\|\Phi(x,x)\|^{1/2} \, \|\Phi(y,y)\|^{1/2}
\]
Then, by repeating once again the same procedure, i.e., by approximating first unitaries by simple operator-valued functions in $\M$ and then by passing to convex combinations of unitaries and applying the Russo–Dye theorem, we finally obtain
\[
\|\Phi(x,y)\| \le \|\Phi(x,x)\|^{1/2} \, \|\Phi(y,y)\|^{1/2}.
\]
\end{proof}

\begin{rem}
By applying this procedure inductively, we can prove that the Cauchy–Schwarz inequality in the norm is satisfied for every positive sesquilinear map $\Phi$ with values in $$\B(\M,\B(\M,\B(\M,\dots (\B(\H), \|\cdot\|_{n.r.})\dots))).$$
If $\KK$ is the cone in $\B(\M,(\B(\H), \|\cdot\|_{n.r.}))$ consisting of those operators that map positive elements in $\M$ into positive elements in $\B(\H)$, then $\B(\M,(\B(\H), \|\cdot\|_{n.r.}))$ is an ordered Banach bimodule over $\M$ where the multiplication is defined in the same way as in Example \ref{ex: 3.9}. By repeating this construction inductively, we can deduce that  $\B(\M,\B(\M,\B(\M,\dots (\B(\H), \|\cdot\|_{n.r.})\dots)))$ is also an ordered Banach bimodule over $\M$.
\end{rem}

\subsection{Generalized numerical radius norm on $L^2(\rho)$ and related Cauchy-Schwarz inequality}

%\section{Third and fourth file}
% In this section we show that if $\Phi(x,y)$ is normal for some fixed $x,y\in \X$, then the Cauchy-Schwarz inequality holds for this (fixed) $\Phi(x,y)$, i.e. the norm of $\Phi(x,y)$ is less or equal the square root of the norm of $\Phi(x,x)$ times the square root of the norm of $\Phi(y,y)$.

% This holds for both the operator norm and for the non-commutative $L^2$-norm.

% Now, of course, this is not very useful for representations since here we deal with fixed $x,y \in \X$ such that $\Phi(x,y)$ is normal, but from the point of view of inequalities this result could be perhaps still interesting.

% In fact, it is an opposite version of Theorem 1.3.1 part (iii) in Størmer, as I also mention at the end of the third file.

% After all, the monograph we submit to is actually devoted to operator inequalities, so I believe that these results from the third and the fourth file could be relevant in that connection.

In this section we introduce a new norm on $L^2(\rho)$ as a generalization of the numerical radius norm on $\B(\H)$. This norm is such that every positive sesquilinear map into $L^2(\rho)$ satisfies Cauchy-Schwarz in this new norm. Hence, this allows us representations of such maps in a Banach space and not just in a quasi-Banach space.

Let $\M$ be a factor of type either I or II  on a Hilbert space $\mathcal{H}$, and let $\rho$ be a faithful semifinite trace on $\M$. Let $\tn{\cdot}_2:L^2(\rho)\to\mathbb{R}^+$ be given by
\begin{equation}\label{eq:new-norm}
\tn{F}_2
=
\sup_{\substack{
    W \in L^{\infty}(\rho)\cap L^{2}(\rho),\\ W \ge 0\\
    \|W\|_{2} \le 1,\; \|W\|_{\infty} \le 1
}}
\| W F W \|_{1}.
\end{equation}

\begin{lem}\label{lem: Lemma 3.1}
The map $\tn{\cdot}_2$ is a norm on $L^{2}(\rho)$.
\end{lem}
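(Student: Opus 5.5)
We need to check that $\tn{\cdot}_2$ is finite, absolutely homogeneous, subadditive, and definite on $L^2(\rho)$.

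\emph{Finiteness, homogeneity and the triangle inequality.} For $W$ admissible and $F\in L^2(\rho)$, H\"older's inequality gives
\[
\|WFW\|_1\le \|W\|_2\,\|FW\|_2\le \|W\|_2\,\|F\|_2\,\|W\|_\infty\le \|F\|_2 .
\]
Hence $\tn{F}_2\le \|F\|_2<\infty$, and $\tn{\cdot}_2$ is well defined with values in $\mathbb{R}^+$. Absolute homogeneity $\tn{\lambda F}_2=|\lambda|\,\tn{F}_2$ is immediate, since $W(\lambda F)W=\lambda\, WFW$ and $\|\cdot\|_1$ is a norm. Subadditivity follows from $\|W(F+G)W\|_1\le \|WFW\|_1+\|WGW\|_1$: take the supremum over admissible $W$ of each term separately.

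\emph{Definiteness.} This is the only nontrivial point and the main obstacle: if $WFW=0$ for every admissible $W$, then $F=0$. I would argue by duality. It suffices to show $\rho(FB)=0$ for all $B\in L^2(\rho)$, since then $\|F\|_2=0$ by the duality formula, and $\|\cdot\|_2$ is a norm.

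\emph{Step 1: polarization on finite projections.} Let $P,Q$ be finite projections, and set $E=P\vee Q$. The projection $E$ is finite, because $\rho(P\vee Q)\le\rho(P)+\rho(Q)$; this follows from Kaplansky's formula $P\vee Q-Q\sim P-P\wedge Q$ together with the tracial property. Every positive $W\le E$ in $E\M E$ with $\|W\|_\infty\le 1$ satisfies $\|W\|_2\le \rho(E)^{1/2}$. So after rescaling by a positive constant, the hypothesis gives $WFW=0$ for all $0\le W\le E$.

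Apply this with $W=E$ to get $EFE=0$. Then apply it to $W=\tfrac12(E+S)$ for a projection $S\le E$, which satisfies $0\le W\le E$. Expanding and using $EFE=0$, $SFS=0$, $ES=SE=S$ gives
\[
0=\tfrac14\,(EFE+EFS+SFE+SFS)=\tfrac14\,(SF+FS)\ \text{compressed to } E.
\]
Hence $E(SF+FS)E=0$. Taking $S=P$ and compressing by $P$ gives $PFP=0$, which is not yet new. To get the off-diagonal part I would instead use $W=P+tQ$ for $P\perp Q$ and small $t>0$. Then
\[
WFW=PFP+t\,(PFQ+QFP)+t^2\,QFQ=t\,(PFQ+QFP).
\]
Compressing by $P$ on the left and $Q$ on the right yields $PFQ=0$ for orthogonal finite projections $P,Q$. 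Together with $PFP=0$, this gives $\rho(FR)=0$ for every finite-rank-type element $R=\sum c_{ij}P_iRP_j$ built from mutually orthogonal finite projections. More simply, $EFE=0$ for every finite projection $E$ gives $\rho(F\,EBE)=\rho(EFE\,B)=0$ for all $B\in\M$.

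\emph{Step 2: approximation.} Let $B\in L^2(\rho)\cap L^\infty(\rho)$. Write $B$ as a combination of positive parts, and for each positive part $B_+$ use Lemma \ref{projectors} to choose finite spectral projections $P_n$ with $\|B_+(I-P_n)\|_2\to0$. Then $P_nB_+P_n\to B_+$ in $L^2(\rho)$, because
\[
\|B_+-P_nB_+P_n\|_2\le \|(I-P_n)B_+\|_2+\|B_+(I-P_n)\|_2 .
\]
By the continuity remark, $\rho(FB_+)=\lim_n \rho(F\,P_nB_+P_n)=0$. Density of $L^2\cap L^\infty$ in $L^2(\rho)$ then gives $\rho(FB)=0$ for all $B\in L^2(\rho)$, hence $F=0$.

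The delicate point is ensuring that $\rho(P\vee Q)<\infty$ and that the chosen $W$ are admissible after rescaling. With the simplified $EFE=0$ route only single finite projections $E$ are needed, and in that case the rescaling is immediate.
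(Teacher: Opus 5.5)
Your proof is correct and follows essentially the paper's route: finite spectral projections from Lemma \ref{projectors}, the relation $WFW=0$ combined with trace cyclicity to make $\rho(F\,\cdot)$ vanish on compressed elements, a H\"older limit, and the conclusion $\rho(FF^*)=0$. The differences are minor. You test with rescaled finite projections $W=cE$, obtaining $EFE=0$ and hence $\rho(F\,P_nB_+P_n)=0$, whereas the paper tests directly with $W=P_n^{(j)}D_j^{1/2}P_n^{(j)}$ to obtain $\rho(FD_jP_n^{(j)})=0$. Also, the polarization detour in your Step 1 (Kaplansky's formula, $W=\tfrac12(E+S)$, $W=P+tQ$) is never used and can be deleted.
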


\begin{proof}
Let us first show that $\tn{\cdot}_2$ is well defined.
If $F \in L^2(\rho)$ and $W \in L^\infty(\rho)$, then $FW \in L^2(\rho)$.
If in addition $W \in L^{2}(\rho)$, then by the noncommutative Hölder inequality, $WFW\in L^1(\rho)$ and

\[
\|WFW\|_{1} \le \|W\|_{2}\,\|FW\|_{2}\le \|W\|_{2}\,\|F\|_{2}\|W\|_{\infty}\le \|F\|_{2}.
\]

Thus $\tn{\cdot}_{2}$ is well defined.

Homogeneity and the triangle inequality follow from the fact that $\|\cdot\|_{1}$ is a norm.
It remains to prove faithfulness.

Assume $\tn{F}_2 = 0$, then $WFW = 0$ for every finite operator
$W \in L^{\infty}(\rho)$ with $W > 0$.
Let $D \in L^{\infty}(\rho) \cap L^{2}(\rho)$ and write it as

\[
D = D_{1} - D_{2} + i(D_{3} - D_{4}),
\qquad D_{j} \ge 0, \forall j\in \{1,\dots, 4\}.
\]

Then $\|D_{j}\|_{2} \le \|D\|_{2}$ for each $j\in \{1,\dots, 4\}$.

By  Lemma \ref{projectors},  for each $j$ there exists a sequence of finite projections $\{P_{n}^{(j)}\}_n$ in $\M$
 such that

\[
\lim_{n\to\infty} \| D_{j}(I - P_{n}^{(j)}) \|_{2} = 0, \qquad \forall j\in \{1,\dots, 4\}.
\]

Moreover, by the construction in the proof of \cite[Lemma 9.8]{BDI},
each $P_{n}^{(j)}=E_{D_j}\left(\frac1n,\infty\right)$ for all $n\in \mathbb{N}$ and $j\in \{1,\dots, 4\}$ where $E_{D_j}$ is the spectral measure corresponding to $D_{j}$, hence $P_{n}^{(j)}$ commutes with ${D_{j}}^{1/2}$ for all $n\in \mathbb{N}$ and $j\in \{1,\dots, 4\}$.

Since $P_{n}^{(j)} {D_{j}}^{1/2} P_{n}^{(j)}$ is a finite positive operator, we get 
\begin{align*}
    \rho(F D_{j}P_{n}^{(j)})&=\rho(F {D_{j}}^{1/2}{D_{j}}^{1/2}P_{n}^{(j)}P_{n}^{(j)})=\rho(F ({D_{j}}^{1/2}P_{n}^{(j)})^2)\\&=\rho({D_{j}}^{1/2}P_{n}^{(j)}F {D_{j}}^{1/2}P_{n}^{(j)})\\&=\rho(P_{n}^{(j)}{D_{j}}^{1/2}P_{n}^{(j)}F P_{n}^{(j)}{D_{j}}^{1/2}P_{n}^{(j)})
\end{align*} since ${D_{j}}^{1/2}P_{n}^{(j)}={D_{j}}^{1/2}P_{n}^{(j)}P_{n}^{(j)}=P_{n}^{(j)}{D_{j}}^{1/2}P_{n}^{(j)}$, for all $n\in \mathbb{N}$ and $j\in \{1,\dots, 4\}$.  Since $WFW=0$ for every positive finite operator $W$, we get
\[
\rho(F D_{j} P_{n}^{(j)}) = 0, \quad \forall n\in\mathbb{N}, \forall j\in\{1,\dots,4\}.
\]
On the other hand, since \[
|\rho(F D_{j}(I-P_{n}^{(j)}))|
\le \|F D_{j}(I-P_{n}^{(j)})\|_{1}\leq\|F\|_{2}\, \|D_{j}(I-P_{n}^{(j)})\|_{2}\to0,\]  as $n\to\infty
$, we deduce that
 $\rho(FD_{j}) = 0$ for each $j\in\{1,\dots,4\}$.
Thus $\rho(FD)=0$ for every $D \in L^{\infty}(\rho)\cap L^{2}(\rho)$, and therefore $\rho(FF^*)=0$. Thus $F=0$ since  $\|F\|_2=0$ and $\|\cdot\|_2$ is a norm.
\end{proof}

% To be inserted after the proof of Lemma 5.1

\begin{rem}\label{rem_4.6}
If $A \in M_n(\mathbb{C})$ the space of the $n\times n$ matrices with complex entries, then for every $x \in \mathbb{C}^n$ we have
\[
|\langle Ax, x \rangle|
= \operatorname{tr}(|X^*AX|),
\]
where $X \in M_n(\mathbb{C})$ is the operator having the vector $x\in \mathbb{C}^n$ in its first column and the zero vector of $\mathbb{C}^n$ in all other ones. In this way, the norm $\tn{\cdot}_2$ can be considered as a generalization of the numerical radius norm.
\end{rem}

\begin{cor}\label{cor: 4.7} Let $\X$ be a vector space, $\Phi : \X \times \X \to (L^{2}(\rho),\tn{\cdot}_{2})$ be a positive sesquilinear map.
Then for all $x,y \in \X$,

\[
\tn{\Phi(x,y)}_{2}
\le \tn{\Phi(x,x)}_{2}^{1/2}\, \tn{\Phi(y,y)}_{2}^{1/2}.
\]

\end{cor}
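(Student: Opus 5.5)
The plan is to imitate the numerical radius argument at the start of this section, with the functionals $T\mapsto\langle Th,h\rangle$ replaced by $F\mapsto\rho(WFWU)$. Here $W$ ranges over the admissible weights in \eqref{eq:new-norm}, and $U$ ranges over partial isometries in $\M$.

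\textbf{Step 1: reduce $\|WFW\|_1$ to a supremum of traces.} Fix $W\in L^\infty(\rho)\cap L^2(\rho)$ with $W\ge 0$, $\|W\|_2\le1$ and $\|W\|_\infty\le1$. By trace duality, $\|WFW\|_1=\sup\{|\rho(WFWU)|:U\in\M,\ \|U\|_\infty\le1\}$; the supremum is attained at $U=V^*$, where $V$ is the polar part of $WFW$. The obstacle is that $\rho(W\,\cdot\,WU)$ is not a positive functional unless $U\ge0$. So the first attempt is the decomposition used in Proposition \ref{prop_HilbMod}: write $U$ as a combination of four positive contractions and apply the scalar Cauchy--Schwarz inequality to each piece. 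This gives only the constant $2$, which is not good enough.

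\textbf{Step 2: get constant $1$ by positivity on the diagonal.} To reach the constant $1$, I will instead use the one-variable structure of the positive forms
\[
\psi_{W,K}(a,b)=\rho\bigl(K^{*}W\Phi(a,b)WK\bigr).
\]
For $K\in\M$, this form is positive by Lemma \ref{lem: rho}, since $K^*W\Phi(a,a)WK\ge0$ lies in $L^1(\rho)$ (this uses the fact that $\Phi(a,a)\in L^2(\rho)$ and $WK\in L^2(\rho)\cap L^\infty(\rho)$).

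Next write the polar decomposition $WFW=V|WFW|$, with $F=\Phi(x,y)$. Then
\[
\|WFW\|_1=\rho(V^*WFW).
\]
I will split $V^*$ using the $2\times2$ matrix trick, or alternatively the factorization $|WFW|=|WFW|^{1/2}|WFW|^{1/2}$. Doing so should express $\rho(V^*WFW)$ as a sesquilinear pairing $\rho(B^*\,\Phi(x,y)\,C)$ with $B=WV$ and $C=W$. The Cauchy--Schwarz inequality for the positive $L^1$-valued form $(a,b)\mapsto$ "compression by $(B,C)$", which is the $p=1$ case from \cite{BIvT2}, then gives
\[
|\rho(V^*WFW)|\le\rho\bigl(V^*W\Phi(x,x)WV\bigr)^{1/2}\,\rho\bigl(W\Phi(y,y)W\bigr)^{1/2}.
\]

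\textbf{Step 3: bound the diagonal terms by $\tn{\cdot}_2$.} For any $T\in\M$ with $\|T\|\le1$,
\[
\rho\bigl(T^*W\Phi(x,x)WT\bigr)\le\|W\Phi(x,x)W\|_1\le\tn{\Phi(x,x)}_2 .
\]
The first inequality holds because $T^*(\cdot)T$ is contractive on $L^1(\rho)$ and $W\Phi(x,x)W\ge0$; the second is just the definition of $\tn{\cdot}_2$. Likewise $\rho(W\Phi(y,y)W)\le\tn{\Phi(y,y)}_2$. Combining these with Step 2 and taking the supremum over $W$ gives the claim.

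\textbf{Main obstacle.} The expected difficulty is Step 2: making rigorous that $\rho(B^*\Phi(x,y)C)$ satisfies a genuine Cauchy--Schwarz inequality with the constant $1$ when $B\ne C$. For fixed $B,C\in L^2(\rho)\cap L^\infty(\rho)$, I plan to use the positive form on $\X\oplus\X$, namely
\[
\Psi\bigl((a_1,a_2),(b_1,b_2)\bigr)=\rho\bigl((B\oplus C)^*\,[\Phi(a_i,b_j)]\,(B\oplus C)\bigr)
\]
restricted to the vectors $(x,0)$ and $(0,y)$. Positivity of this form needs $[\Phi(a_i,a_j)]_{i,j}\ge0$ in $M_2(L^2(\rho))$, i.e. $2$-positivity of $\Phi$, which is not available for a general positive $\Phi$. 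If that fails, the fallback is to note that $\Phi(x,y)=\Phi(x,y)$ paired against rank-one-type weights, as in Remark \ref{rem_4.6}, and to apply Cauchy--Schwarz to the forms $a,b\mapsto\rho(W\Phi(a,b)W\,E)$ for spectral projections $E$ of $W$. The hope is that the supremum over the $W$ in \eqref{eq:new-norm} is already attained on such diagonal pairings, mirroring the passage from $\langle Th,h\rangle$ to $\|T\|_{n.r.}$.
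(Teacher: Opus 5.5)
There is a genuine gap, and it is exactly where you suspected. The Step 2 inequality
\[
|\rho(V^*WFW)|\le\rho\bigl(V^*W\Phi(x,x)WV\bigr)^{1/2}\,\rho\bigl(W\Phi(y,y)W\bigr)^{1/2}
\]
is false for a merely positive $\Phi$. The $p=1$ result of \cite{BIvT2} does not give it: that result bounds $\|\cdot\|_1$-norms of an $L^1(\rho)$-valued positive map. Pairing against two different compressions $B\neq C$ is a $2$-positivity statement, and here it genuinely fails, not just for lack of a proof.

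Here is a counterexample. Take $\M=M_2(\mathbb{C})$ with the usual trace, $\X=\mathbb{C}^2$, and $\Phi(a,b)=(ab^*)^{t}$ (transpose: positive, not $2$-positive). Take $W=2^{-1/2}I$, so $\|W\|_2=1$ and $\|W\|_\infty<1$, and let $x=e_1$, $y=e_2$. With $E_{ij}$ the matrix units, $F=\Phi(x,y)=E_{21}$. The polar part of $WFW=\frac12E_{21}$ is $V=E_{21}$, so the left side equals $\|WFW\|_1=\frac12$. On the other hand $V^*W\Phi(x,x)WV=\frac12E_{12}E_{11}E_{21}=0$, so the right side is $0$.

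Your fallback does not repair this. Spectral projections of $W$ are not the relevant objects, and the hope that the supremum is attained on diagonal pairings is not an argument.

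The missing idea is that you never need to split the polar part into two different compressions. Your forms $\psi_{W,K}$ are the right ones, but take $K=P_j$, where $\Pi=\sum_{j=1}^N c_jP_j$ is a simple operator in $\M$ with mutually orthogonal projections $P_j$ and $\sum_jP_j=I$. Then $\rho(WFWP_j)=\psi_{W,P_j}(x,y)$ by traciality. Three facts combine:
\begin{itemize}
\item[(a)] the scalar Cauchy--Schwarz inequality for each positive form $\psi_{W,P_j}$;
\item[(b)] Cauchy--Schwarz in $\mathbb{R}^N$;
\item[(c)] the identity $\sum_j\psi_{W,P_j}(a,a)=\rho(W\Phi(a,a)W)=\|W\Phi(a,a)W\|_1$, which holds because the $L^1$-norm equals $\rho$ on positive elements.
\end{itemize}
Together they give $|\rho(WFW\Pi)|\le\|\Pi\|_\infty\,\|W\Phi(x,x)W\|_1^{1/2}\|W\Phi(y,y)W\|_1^{1/2}$ with constant $1$. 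Next, approximate a unitary in norm by such $\Pi$. Then approximate any contraction, in particular $V^*$, by convex combinations of unitaries (Russo--Dye). This yields $\|WFW\|_1\le\|W\Phi(x,x)W\|_1^{1/2}\|W\Phi(y,y)W\|_1^{1/2}$, and your Step 3 plus the supremum over $W$ finishes.

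This is precisely the content of \cite[Proposition 3.1]{BIvT2} applied to the positive $L^1(\rho)$-valued map $\Phi_W(a,b)=W\Phi(a,b)W$, which is well defined by H\"older. That is how the paper proves the corollary in two lines.
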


\begin{proof}
Given $W \in L^{\infty}(\rho)\cap L^{2}(\rho)$ with $W \ge 0$, $\|W\|_{2},\|W\|_{\infty}\leq1$,
define

\[
\Phi_{W}(x,y) := W \Phi(x,y) W, \quad \forall x,y\in \X.
\]

Then $\Phi_{W}$ is a positive sesquilinear map with values in $L^1(\rho)$.
By \cite[Proposition 3.1, part 7]{BIvT2}, we have \begin{align*}   
\|\Phi_W(x,y)\|_{1}
&\le \|\Phi_W(x,x)\|_{1}^{1/2}\, \|\Phi_W(y,y)\|_{1}^{1/2}\\&\leq \tn{\Phi(x,x)}_{2}^{1/2}\, \tn{\Phi(y,y)}_{2}^{1/2}, \quad \forall x,y\in \X.
\end{align*}

Taking  the supremum over all such $W$ yields the claim.
\end{proof}

\medskip
\begin{rem}As we will see later in Section \ref{sec: appl}, by using Corollary \ref{cor: 4.7}, we will be able to represent in a $B_\YY$-space every positive linear map from a unital *-algebra into $L^2(\rho)$, see Corollary \ref{moreGNS alg}.
\end{rem}

Let  $\YY=\mathcal{L}^{2}(\rho)$ be the completion of $(L^{2}(\rho), \tn{\cdot}_2)$ and  $\M$ be a factor of type I or II. %and recall the norm $\tn{\cdot}_2$ on $L^{2}(\rho)$ defined earlier in \eqref{eq:new-norm}. 
Now we consider the space $
\B(\M, \mathcal{L}^{2}(\rho))$ and denote the operator norm in $\B(\M, \mathcal{L}^{2}(\rho))$ by $\|\cdot\|_{o.n.}$. %, and  
%$\B(\M, \mathcal{L}^{2}(\rho))$ denotes the space of all bounded linear operators from $\M$ into $\mathcal{L}^{2}(\rho)$, equipped with the operator norm.  
If $\KK$ is the cone in $\B(\M,\mathcal{L}^2(\rho))$ consisting of those operators that map positive elements in $\M$ into positive elements in $L^2(\rho)$, then $\B(\M,\mathcal{L}^2(\rho))$ is an ordered Banach bimodule over $\M$ where the multiplication is defined in the same way as in Example \ref{ex: 3.9}.

\begin{defn}
    A sesquilinear map $\Phi : \X \times \X \to \B(\M, \mathcal{L}^{2}(\rho))$ will be called {\em positive}  if $\Phi(x,x)(T)$ is a positive element of  
$L^{2}(\rho)$ for all $x \in \X$ whenever $T\in\M$ and $T \ge 0$.
\end{defn}
The proofs of both \cite[Proposition 3.1]{BIvT2} and of Proposition \ref{prop: num rad norm-CS} motivate the proof of the next theorem. 
\begin{thm}\label{thm: CS o.n.}    
Let $\Phi : \X \times \X \to \B(\M, \mathcal{L}^{2}(\rho))$ be a positive sesquilinear map. %, and suppose that $\Phi(x,y)(\M) \subseteq L^{\infty}(\rho) \cap L^{2}(\rho)$ for all $x,y \in \X$.  
Then for all $x,y \in \X$,
\[
\|\Phi(x,y)\|_{o.n.}
\leq
\|\Phi(x,x)\|_{o.n.}^{1/2}\,
\|\Phi(y,y)\|_{o.n.}^{1/2}.
\]
\end{thm}
\begin{proof}
Let  $W \in L^2(\rho)\cap L^\infty(\rho) $ with $W \geq 0$, $\|W\|_{2} \le 1,$ $\|W\|_{\infty} \le 1$ and $\Pi$ and $\Theta$ be two simple operator-valued functions in $\M$
$$
\Pi = \sum_{i=1}^N c_i P_i, \qquad 
\Theta = \sum_{j=1}^M d_j Q_j, 
$$
where $c_i,d_j$ are scalars for every $i\in\{1,\dots, N\}$, $j\in\{1,\dots, M\}$ and  $\{P_i\}$ and $\{Q_j\}$ are families of mutually orthogonal projections and $\sum_{i=1}^N P_i= \sum_{j=1}^M  Q_j=I$. Then, if $x,y\in\X$
\[
\left| \rho\!\left( \Pi \, W \Phi(x,y)(\Theta) W \right) \right|
= \left| \sum_{i,j} c_i d_j \, \rho\!\left( P_i W \Phi(x,y)(Q_j) W \right) \right|.
\]
Thus, \begin{align*}
    &\left| \rho\!\left( \sum_{i=1}^N c_i P_i W \Phi(x,y)\left(\sum_{j=1}^M d_j Q_j\right) W \right) \right| \\& \leq \sum_{i,j} |c_i|\,|d_j|\left| \rho\!\left(  P_i W \Phi(x,y)\left(Q_j\right) W \right) \right|\\&=\sum_{i,j} |c_i|\,|d_j|\left| \rho\!\left(  P_i^2 W \Phi(x,y)\left(Q_j\right) W \right) \right|\\&=\sum_{i,j} |c_i|\,|d_j|\left| \rho\!\left(  P_i W \Phi(x,y)\left(Q_j\right) W P_i\right) \right|.
\end{align*}
For each pair $(i,j)$ define the sesquilinear form on $\X$
\[
\psi_{ij}(x,y)
= \rho\!\left( P_i W \Phi(x,y)(Q_j) W P_i \right),\quad x,y\in\X.
\]
Since $\Phi$ is positive, each $\psi_{ij}$ is positive.  
Applying Cauchy–Schwarz to every $\psi_{ij}$ and using the fact that  $\sum_i P_i = \sum_j Q_j = I$, we obtain
\begin{align*}
    &\sum_{i,j} |c_i|\,|d_j|\left| \rho\!\left(  P_i W \Phi(x,y)\left(Q_j\right) W P_i\right) \right|\\&\leq \sum_{i,j} |c_i|\,|d_j| \left(\rho\!\left(P_i W \Phi(x,x)(Q_j)WP_i \right)\right)^{1/2}\left(\rho\!\left(P_i W \Phi(y,y)(Q_j)WP_i \right)\right)^{1/2}\\&\leq\left( \sum_{i,j} |c_i|\,|d_j|\rho\!\left(P_i W \Phi(x,x)(Q_j)WP_i \right)\right)^{1/2}\\&\qquad\qquad\cdot \left( \sum_{i,j} |c_i|\,|d_j|\rho\!\left(P_i W \Phi(y,y)(Q_j)WP_i \right)\right)^{1/2}\\&= \left( \sum_{i,j} |c_i|\,|d_j|\rho\!\left(P_i W \Phi(x,x)(Q_j)W\right)\right)^{1/2}\\&\qquad\qquad\cdot \left( \sum_{i,j} |c_i|\,|d_j|\rho\!\left(P_i W \Phi(y,y)(Q_j)W\right)\right)^{1/2}\\&\leq \left(\sum_{i=1}^N\sum_{j=1}^M  \|\Pi\|_\infty\,\|\Theta\|_\infty\rho\!\left(P_i W \Phi(x,x)(Q_j)W\right)\right)^{1/2}\\&\qquad\qquad\cdot \left( \sum_{i=1}^N\sum_{j=1}^M  \|\Pi\|_\infty\,\|\Theta\|_\infty\rho\!\left(P_i W \Phi(y,y)(Q_j)W\right)\right)^{1/2}\\&
=\|\Pi\|_\infty\,\|\Theta\|_\infty \left(\rho\!\left(W \Phi(x,x)(I)W\right)\right)^{1/2}\left(\rho\!\left(W \Phi(y,y)(I)W\right)\right)^{1/2} \\&\leq\|\Pi\|_\infty\,\|\Theta\|_\infty
\tn{\Phi(x,x)(I)}_2^{1/2}\,
\tn{\Phi(y,y)(I)}_2^{1/2}\\&\leq\|\Pi\|_\infty\,\|\Theta\|_\infty
\|\Phi(x,x)\|_{o.n.}^{1/2}\,
\|\Phi(y,y)\|_{o.n.}^{1/2}.
\end{align*}
Since this holds for all simple functions $\Pi,\Theta$ in $\M$, given an unitary operator $U \in \M$, there exists a sequence $\{\Pi_n\}_n$ of simple operator-valued functions in $\M$ such that $\Pi_n\to U$ as $n\to\infty$ in operator norm.  
Thus, for every $n\in\mathbb{N}$, by \cite[Proposition 3.4.5, Corollary 3.4.6]{Dodds} we get
\begin{align*}
   & \left| \rho\left( (U-\Pi_n) W \Phi(x,y)(\Theta) W \right) \right|
\le \rho\!\left( |(U-\Pi_n) W \Phi(x,y)(\Theta) W |\right)\\&\leq \|U-\Pi_n\|_\infty
\|W\Phi(x,y)(\Theta)W\|_{1}\to 0, \quad\mbox{ as }n\to\infty.
\end{align*}
On the other hand, by the above inequalities, we have
\begin{align*}
 \left| \rho\left( \Pi_n W \Phi(x,y)(\Theta) W \right) \right|
\leq \|\Pi_n\|_\infty\|\Theta\|_\infty
\|\Phi(x,x)\|_{o.n.}^{1/2}\,
\|\Phi(y,y)\|_{o.n.}^{1/2}
\end{align*} and  as $n\to\infty$ on both sides we obtain \begin{align*}
    \left| \rho\left( U W \Phi(x,y)(\Theta) W \right) \right|
\leq \|\Theta\|_\infty
\|\Phi(x,x)\|_{o.n.}^{1/2}\,
\|\Phi(y,y)\|_{o.n.}^{1/2}.
\end{align*}
If now $V$ is a  convex combination of unitary operators in $\M$ then we easily get also
\begin{equation}\label{eq: V conv comb}
    \left| \rho\left( V W \Phi(x,y)(\Theta) W \right) \right|
\leq \|\Theta\|_\infty
\|\Phi(x,x)\|_{o.n.}^{1/2}\,
\|\Phi(y,y)\|_{o.n.}^{1/2}.
\end{equation}

% Let $Z$ be the partial isometry from the polar decomposition of $W \Phi(x,y) W$ and let  $\{V_n\}\subset\M$ be a sequence of convex combinations of unitary operators in $\M$ such that $V_n\to Z^*$ as $n\to \infty$ in the operator norm. 
% Then,
% \[
% \left| \rho\!\left( (V_n-Z^*) W \Phi(x,y)(\Theta) W \right) \right|
% \leq\|V_n-Z^*\|_\infty\|
% W \Phi(x,y)(\Theta) W \|_1\to0
% \]
%  as $n\to\infty$.
% By \eqref{eq: V conv comb} for every $n\in\mathbb{N}$ it is \begin{equation*}
%     \left| \rho\left( V_n W \Phi(x,y)(\Theta) W \right) \right|
% \leq \|\Theta\|_\infty
% \|\Phi(x,x)\|_{o.n.}^{1/2}\,
% \|\Phi(y,y)\|_{o.n.}^{1/2},
% \end{equation*}  and as $n\to\infty$ we get
% \[
% \| W \Phi(x,y)(\Theta) W \|_1
% \le\|\Theta\|_\infty
% \|\Phi(x,x)\|_{o.n.}^{1/2} \, \|\Phi(y,y)\|_{o.n.}^{1/2}
% \]
% since $ZW\Phi(x,y)(\Theta) W=\left|W\Phi(x,y)(\Theta) W\right|$.

Let $m \in \mathbb{N}$ and $A_m \in L^{1}(\rho)\cap L^\infty(\rho)$ be such that
\[\|A_m - W \, \Phi(x,y)(\Theta) W\|_{1} < \frac{1}{2^m}
\] and let $Z_m$ be the partial isometry from the polar decomposition of $A_m$, choose a sequence
$\{V_n^{(m)}\}_{n \in \mathbb{N}} \subset \M$ of convex combinations of unitaries in $\M$ such that
\[
V_n^{(m)}\to Z_m^*  \quad \text{in the operator norm as } n\to\infty.
\]
Then,
$$
\left|\rho\left((V_n^{(m)} - Z_m^*)\, W \Phi(x,y)(\Theta) W\right)\right|
   \le \| V_n^{(m)} - Z_m^*\|_\M \, \|W\Phi(x,y)(\Theta)W\|_{1}\to0,$$ as $n\to \infty$.
 By \eqref{eq: V conv comb}, for every $n \in \mathbb{N}$, it is
\[\left|\rho\left(V_n^{(m)} W \Phi(x,y)(\Theta) W\right)\right|
   \leq \|\Theta\|_\M \, \|\Phi(x,x)\|_{o.n.}^{1/2}\, \|\Phi(y,y)\|_{o.n.}^{1/2}.\]
Letting $n \to \infty$, we obtain that 
\begin{align*}\left|\rho\left(Z_m^* W \Phi(x,y)(\Theta) W\right)\right|
  & \leq \|\Theta\|_\M \, \|\Phi(x,x)\|_{o.n.}^{1/2}\, \|\Phi(y,y)\|_{o.n.}^{1/2}.%\\&=\|\Phi(x,x)\|_{o.n.}^{1/2}\, \|\Phi(y,y)\|_{o.n.}^{1/2}.
\end{align*}
Now observe that
\begin{align*}
  &\left|\rho(|A_m|)-\left|\rho\left(Z_m^*W \Phi(x,y)(\Theta) W\right) \right|\,\right|      \\&=\left|\rho(Z_m^*A_m)-\left|\rho\left(Z_m^*W \Phi(x,y)(\Theta) W\right) \right|\,\right|\\&\leq\left|\rho(Z_m^*(A_m-W \Phi(x,y)(\Theta) W)) \right|\\&\leq\|Z_m^*\|_\infty\|A_m-W\Phi(x,y)(\Theta)W\|_1.\end{align*} by \cite[Proposition 3.4.5, Corollary 3.4.6]{Dodds}, then 
\begin{align*}
&\left|\,\| W \Phi(x,y)(\Theta)\|_{1}-\left|\rho\left(Z_m^*W \Phi(x,y)(\Theta) W\right) \right|\,\,\right|\\&=
  \left|\,\| W \Phi(x,y)(\Theta)\|_{1}-\|A_m\|_1+\|A_m\|_1-\left|\rho\left(Z_m^*W \Phi(x,y)(\Theta) W\right) \right|\,\right|\\&\leq
  \left|\,\| W \Phi(x,y)(\Theta)\|_{1}-\|A_m\|_1\,\right|+\left|\rho(|A_m|)-\left|\rho\left(Z_m^*W \Phi(x,y)(\Theta) W\right) \right|\,\right|\\&\leq\frac{1}{2^m}+\|Z_m^*\|_\infty\|A_m-W\Phi(x,y)(\Theta)W\|_1<\frac{1}{2^{m-1}}.\end{align*}

Hence,

$$\|W\Phi(x,y)(\Theta)W\|_1\leq \|\Phi(x,x)\|_{o.n.}^{1/2}\,
\|\Phi(y,y)\|_{o.n.}^{1/2}+\frac{1}{2^{m-1}}.$$
Since this holds for all $m\in\mathbb{N}$, we deduce that $$\|W\Phi(x,y)(\Theta)W\|_1\leq \|\Phi(x,x)\|_{o.n.}^{1/2}\,
\|\Phi(y,y)\|_{o.n.}^{1/2}.$$

Next, given a unitary $U \in \M$, choose a sequence
$\{\Theta_n\}$ of simple operator-valued functions in $\M$ such that
$\Theta_n \to U$ as $n\to\infty$ in the operator norm. Then,
\begin{align*}
    \|\, W \Phi(x,y)(\Theta_n - U) W \|_{1}
&\le \tn{\Phi(x,y)(\Theta_n - U)}_2\\&\leq
\|\Theta_n - U\|_\infty \, \| \Phi(x,y) \|_{o.n.}
\to 0,\quad \mbox{ as }n\to\infty.
\end{align*}
It follows that
\[
\| W \Phi(x,y)(\Theta_n) W \|_{1}
\longrightarrow
\| W \Phi(x,y)(U) W \|_{1},\quad \mbox{ as }n\to\infty.
\]
However, by the inequalities proved above,
\[
\| W \Phi(x,y)(\Theta_n) W \|_{1}
\le
\|\Theta_n\|_\infty \, \|\Phi(x,x)\|_{o.n.}^{1/2}\,
\|\Phi(y,y)\|_{o.n.}^{1/2}
\qquad \text{for all } n,
\]
hence, letting $n \to \infty$ gives
\[
\| W \Phi(x,y)(U) W \|_{1}
\le\|\Phi(x,x)\|_{o.n.}^{1/2}\,
\|\Phi(y,y)\|_{o.n.}^{1/2}.
\]
Once again, if $V$ is a convex combination of unitaries in $\M$, the same argument shows that
\begin{equation}
    \label{eq: second V}\| W \Phi(x,y)(V) W \|_{1}
\le
\|\Phi(x,x)\|_{o.n.}^{1/2}\,
\|\Phi(y,y)\|_{o.n.}^{1/2}.
\end{equation}
Now let $T \in \M$ with $\|T\|_\infty \leq 1$, and let and let  $\{V_n\}\subset\M$ be a sequence of convex combinations of unitary operators in $\M$ such that $V_n\to T$ as $n\to \infty$ in the operator norm.
Then,\begin{align*}
  &\left|\| W \Phi(x,y)(V_n) W \|_{1}
-\| W \Phi(x,y)(T) W \|_{1}\right|\\&\leq
\| W \Phi(x,y)(V_n - T) W \|_{1}
\le
\|V_n - T\|_\infty \, \|\Phi(x,y)\|_{o.n.}
\to 0,\, \mbox{as }n\to\infty.
\end{align*}
Hence, $$\| W \Phi(x,y)(V_n) W \|_{1}
\to\| W \Phi(x,y)(T) W \|_{1},\,\, \mbox{ as }n\to\infty$$
so, since the \eqref{eq: second V} holds for every $V_n$, then we can conclude that it is 
\[
\| W \Phi(x,y)(T) W \|_{1}
\le
\|\Phi(x,x)\|_{o.n.}^{1/2}\,
\|\Phi(y,y)\|_{o.n.}^{1/2}.
\]
 Since $W \in \M$ with $W \geq 0$ and both $\|W\|_\infty \leq 1$ and $\|W\|_{2} \leq 1$ has been chosen arbitrarily, taking the supremum over all such $W$ yields
\[
\tn{\Phi(x,y)(T)}_{2}
\le
\|\Phi(x,x)\|_{o.n.}^{1/2}\,
\|\Phi(y,y)\|_{o.n.}^{1/2}.
\]
Finally, taking the supremum over all $T \in \M$ with $\|T\|_\infty \leq 1$, we conclude that
\[
\|\Phi(x,y)\|_{o.n.}
\leq
\|\Phi(x,x)\|_{o.n.}^{1/2}\,
\|\Phi(y,y)\|_{o.n.}^{1/2}.
\]
This completes the proof.\end{proof}

\begin{rem}\label{rem_4.11}
    Notice that since $\tn{\cdot}_2\leq\|\cdot\|_2$, then $$\B(\M, (L^2(\rho),\|\cdot\|_2))\subseteq\B(\M, \mathcal{L}^2(\rho)).$$ As we will see later in Section \ref{sec: appl}, by Theorem \ref{thm: CS o.n.} we will be able to represent in a $B_\YY$-space every bounded positive left-invariant sesquilinear map $\Phi:\A\times\A\to\B(\M, \mathcal{L}^2(\rho))$ such that $\Phi(x,y)(\M)\subseteq L^2(\rho)$, for every $x,y\in\A$, where $\A$ is a unital quasi *-algebra. However, by the above, this means that we can represent in a $B_\YY$-space every bounded positive left-invariant sesquilinear map $\Phi:\A\times\A\to\B(\M, (L^2(\rho),\|\cdot\|_2))$.
\end{rem}

\section{Applications}\label{sec: appl}

Let $\mathcal{K}$ be a quasi $B_\YY$-space and $D(T)$ a dense subspace of $\mathcal{K}$.
A linear map $T:D(T)\to \mathcal{K}$ is said {\em $\Phi$-adjointable} if there exists a linear map $T^*$ defined on a subspace $D(T^*)\subset \mathcal{K}$ such that
$$\Phi(T\xi,\eta)= \Phi(\xi, T^*\eta), \quad \forall \xi \in D(T), \eta\in D(T^*).$$

Let $\D$ be a dense subspace of $\mathcal{K}$ and let us consider the following families of linear operators acting on $\D$:
\begin{align*}		{\LDK}&=\{T \mbox{ $\Phi$-adjointable}, D(T)=\D;\; D(T^*)\supset \D\} \\	{\Lc^\dagger(\D)}&=\{T\in \LDK: T\D\subset \D; \; T^*\D\subset \D\} \\	{\Lc^\dagger(\D)_b} &=\{T\in \Lc^\dagger(\D): \, T \mbox{ is bounded on $\D$} \}.\end{align*}
%where $\overline{W}$ denotes the closure of $W$.
The involution in $\LDK$ is defined by		$T^\dag := T^*\upharpoonright \D$, the restriction of $T^*$, the $\Phi$-adjoint of $T$, to $\D$.

The sets $\Lc^\dagger(\D)$ and ${\Lc^\dagger(\D)_b}$ are *-algebras.

\begin{rem} \label{rem_closable} If $T\in \LDK$ then $T$ is closable. We denote by $T^*$ its $\Phi$-adjoint and by $D(T^*)$ its domain.
$T^* $ is a closed operator. \end{rem}
\begin{rem} \label{rem_partialmult} As in the case of Hilbert spaces, one can prove that
$\LDK$ is a {\em partial *-algebra} \cite{ait_book}  with respect to the following operations: the usual sum $T_1 + T_2 $,
the scalar multiplication $\lambda T$, the involution $ T \mapsto T\ad := T^* \up {\D}$ and the \emph{(weak)}
partial multiplication $\mult$
defined whenever there  exists $W\in \LDK$ such that
$$\Phi(T_2 \xi,T_1\eta)= \Phi(W\xi,\eta), \quad \forall \xi,\eta \in \D.$$
Due to the density of $\D$ in $\mathcal{K}$, the element $W$, if it exists, is unique. We put $W=T_1\mult T_2$.
\end{rem}

\begin{defn} Let $\D$ be a dense subspace of $\mathcal{K}$. If $\mathcal{Y}$ is a $\dag$-invariant subset of
${\LDK}$, the {\em weak bounded commutant of} $\mathcal{Y}$ is defined to be the set
$$(\mathcal{Y},\D)_w'=\{B\in\B(\mathcal{K}): \Phi(BY\xi,\eta)=\Phi(B\xi,Y^\dag\eta),  \,\, \forall Y\in\mathcal{Y},\, \xi,\eta\in\D\}.$$
\end{defn}

% By a similar argument than in \cite[Remark ??]{BIvT1}, due to the   Cauchy-Schwarz-like inequality for positive $\YY$-valued sesquilinear maps, we can show that $X^*$ is closed.
% We prove that $X^*$ is closed. Indeed, suppose that $\{u_n\}_n$ is a sequence in $\DD(X^*)$ such that \\ \mbox{$\|u_n-u\|_\Phi \to 0$} for some $u\in \X$ and $\|X^*u_n -v\|_\Phi \to 0$ for some $v\in \X$. Clearly $\|u_n-u\|_\Phi \to 0$ is equivalent to $\Phi(u_n-u,u_n-u)\to 0$.
% Then by Lemma \ref{lem:1}, Lemma \ref{lem: 2} and Proposition \ref{prop: 1}, we get, for every $y\in \X$,
%  $$\|\Phi(u_n-u,y)\|_\YY^2 \leq 4\| \Phi(u_n -u, u_n-u)\|_\YY\|\Phi(y,y)\|_\YY\to 0.$$
% Hence, for every $z\in \DD$
% $$ \|\Phi(u_n,Xz)\|_\YY= \|\Phi(X^*u_n,z)\|_\YY\to \|\Phi(u,Xz)\|_\YY.$$
% On the other hand,
% $$ \|\Phi(X^*u_n,z)\|_\YY \to \|\Phi(v,z)\|_\YY.$$
% These relations imply that $u \in \DD(X^*)$ and $X^*u=v$. Thus $X^*$ is closed.
% Now apply this result to $X^{\dag*}$ to obtain a closed extension of $X$.

%{\color{red}\begin{rem} \label{rem_partialmult}
%$\LDK$ is also a {\em partial *-algebra} \cite{ait_book}  \ctrem{si usa? sennò cancellare} with respect to the following operations: the usual sum $X_1 + X_2 $,
%the scalar multiplication $\lambda X$, the involution $ X \mapsto X\ad := X^* \up {\DD}$ and the \emph{(weak)}
%partial multiplication $\mult$
%defined whenever there  exists $Y\in \LDK$ such that
%$$\Phi(X_2 a,X_1b)= \Phi (Ya,b), \quad \forall a,b \in \DD.$$
%The element $Y$, if it exists, is unique. We put $Y=X_1\mult X_2$.
%\end{rem}
%}

\begin{defn} \label{defn_starrepmod} 
Let $(\A,\A_0)$ be a quasi *-algebra with unit $\id$.   Let $\mathcal{D}$ be a dense subspace
of a certain $B_\YY$-space $\mathcal{K}$ with
$\YY$-valued    inner product $\ip{\cdot}{\cdot}_\mathcal{K}$.   A linear map $\pi$ from $\A$ into  ${\mathcal L}\ad(\mathcal{D},\mathcal{K})$ is called  a \emph{*-representation} of  $(\A, \A_0)$,
if the following properties are fulfilled:
\begin{itemize}
\item[(i)]  $\pi(a^*)=\pi(a)^\dagger:=\pi(a)^*\upharpoonright\mathcal{D}, \quad \forall \ a\in \A$;
\item[(ii)] for $a\in \A$ and $c\in \A_0$, $\pi(a)\mult\pi(c)$ is well-defined and \linebreak {$\pi(a)\mult \pi(c)=\pi(ac)$}.
\end{itemize}
We assume that for every *-representation $\pi$ of $(\A,\A_0)$,  $\pi(\id)={\idop_{\mathcal{D}}}$, the
identity operator on  the space $\mathcal{D}$. Let $t_\pi$ be  the graph topology defined by the seminorms $\xi\in\mathcal{D}\to \|\xi\|_\mathcal{K}+\|\pi(a)\xi\|_\mathcal{K}$, $a\in\A$, with $ \|\cdot\|_\mathcal{K}$ the    norm induced by the   inner product on $\mathcal{K}$.  \\
 The *-representation $\pi$  is said to be\begin{itemize}
% \item {\em closable} if there exists $\widetilde{\pi}$ the closure of $\pi$, defined as $\widetilde{\pi}(a)=\overline{\pi(a)}\upharpoonright{\widetilde{\D}}$ where $\widetilde{\D}$ is the completion under the graph topology $t_\pi$ defined by the seminorms $\xi\in\mathcal{D}\to \|\xi\|_\mathcal{K}+\|\pi(a)\xi\|_\mathcal{K}$, $a\in\A$, with $ \|\cdot\|_\mathcal{K}$ the    norm induced by the   inner product on $\mathcal{K}$; 
\item \emph{closed} if  $\mathcal{D}[t_\pi]$ is complete; 
\item  \emph{cyclic} if there  exists $\xi\in\mathcal{D}$ such that $\pi(\A_0)\xi$ is dense in $\mathcal{K}$ in its  norm topology. In this case $\xi$ is called {\em cyclic vector}.
\end{itemize}

\end{defn}

%\begin{defn} \label{px}
%We 	denote by $\QA$ the set of all $\YY$-valued  positive sesquilinear  maps on $\A \times \A$ that satisfy a property of invariance:
%\begin{itemize}
%\item[(I)]
%$\Phi(ac,d)=\Phi(c, a^*d), \quad \forall \ a \in \A, \ c,d \in \A_0$
%\end{itemize} and  call $\Phi\in\QA$ a $\YY$-valued {\em invariant} positive  sesquilinear  map.
%\end{defn}

%The following results given in \cite{BIvT1}, can be easily adapted to the case of $\YY$-valued sesquilinear maps due to Cauchy-Schwarz-like inequality for positive $\YY$ sesquilinear maps. We will indicate by  $\widetilde{\A}$ the completion of $\A/\mathfrak{N}_\Phi$ w.r.to $\|\cdot\|_\Phi$.
%\begin{prop}\label{prop_nonsing}
%\vspace{-1mm} Let  $\Phi$ be a  $\YY$-valued positive  sesquilinear map on $\A\times\A$.
%The following statements are equivalent:
%\begin{itemize}
%	\item[{\em (i)}]$\Lambda_\Phi(\A_0)=\A_0/\mathfrak{N}_\Phi$ is dense in  $\widetilde{\A}$.
%	\item[{\em (ii)}] If $\{a_n\}_n$ is a sequence of elements of $\A$ such that:
%	\begin{itemize}
%		\item[{\em (ii.a)}]$\Phi(a_n,c){\to} 0_\YY$, as $n \to\infty$, for every $c \in \A_0$;
%		\item[{\em (ii.b)}]$\Phi(a_n-a_m,a_n-a_m){\to} 0_\YY$, as $n,m \to\infty$;
%	\end{itemize}
%	then,  $\displaystyle \lim_{n\to \infty}\Phi(a_n,a_n) =0_\YY$.
%\end{itemize}
%\end{prop}

\begin{defn} We denote by $\IA$ the set of all $\YY$-valued  positive sesquilinear  maps on $\A \times \A$ with the following properties:
	\begin{itemize}
	\item[(i)] $\Lambda_\Phi(\A_0)=\A_0/\mathfrak{N}_\Phi$ is dense in  the completion $\widetilde{\A}$ of $\A$.
	\item[(ii)]
	 $\Phi(ac,d)=\Phi(c, a^*d), \quad \forall \ a \in \A, \ c,d \in \A_0$ (left-invariant).  \end{itemize} \end{defn}

\begin{thm}\label{thm_rep}
\vspace{-1mm} Let $(\A,\A_0)$  be a  quasi *-algebra with unit $\id$ and $\Phi$ be a $\YY$-valued left-invariant positive  sesquilinear  map on on $\A \times \A$.
The following statements are equivalent:
\begin{itemize}
\item[{\em (i)}]$\Phi\in\IA$;
\item[{\em (ii)}] there exist a (quasi) $B_\YY$-space $\mathcal{K}_\Phi$ with $\YY$-valued (quasi)-inner product  $\ip{\cdot}{\cdot}_{\mathcal{K}_\Phi}$,
%  				Banach space $\mathcal{K}_\Phi$ whose quasi norm is induced by  a $\YY$-valued inner product $\ip{\cdot}{\cdot}_{\mathcal{K}_\Phi}$, 
a dense subspace $\D_\Phi\subseteq\mathcal{K}_\Phi$ and a closed cyclic *-representation $\pi:\A\to{\mathcal L}^\dag(\D_\Phi,\mathcal{K}_\Phi)$ with cyclic vector $\xi_\Phi$ such that $$\ip{\pi(a)\xi}{\eta}_{\mathcal{K}_\Phi}=\ip{\xi}{\pi(a^*)\eta}_{\mathcal{K}_\Phi}, \quad \forall \xi,\eta\in\D_\Phi, a\in\A$$ and such that  $$\Phi(a,b)=\ip{\pi(a) \xi_\Phi}{\pi(b) \xi_\Phi}_{\mathcal{K}_\Phi}, \quad \forall  a,b\in\A.$$ %where $\xi_\Phi$ is a cyclic vector for $\pi$
\end{itemize}
\end{thm}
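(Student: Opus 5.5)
We prove the equivalence by the usual GNS-type construction, as in \cite{BIvT1, BIvT2}. Throughout we use that $\Phi$ satisfies a (generalized or proper) Cauchy--Schwarz inequality with some constant $k\ge 1$, as given by the results of Sections 3 and 4 for the target space $\YY$ at hand. Without it, $\mathfrak N_\Phi$ would not be a subspace coinciding with $\{a:\Phi(a,a)=0_\YY\}$, and $\|\cdot\|_\Phi$ would not be a quasi norm.

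For (ii)$\Rightarrow$(i), left invariance of $\Phi$ follows from the symmetry of $\pi$ together with $\pi(a)\mult\pi(c)=\pi(ac)$ and $\pi(\id)=\idop_{\D_\Phi}$. Indeed, for $a\in\A$ and $c,d\in\A_0$,
\[
\Phi(ac,d)=\ip{\pi(a)\pi(c)\xi_\Phi}{\pi(d)\xi_\Phi}_{\mathcal K_\Phi}=\ip{\pi(c)\xi_\Phi}{\pi(a^*)\pi(d)\xi_\Phi}_{\mathcal K_\Phi}=\Phi(c,a^*d).
\]
For the density condition, the map $\Lambda_\Phi(a)\mapsto\pi(a)\xi_\Phi$ is isometric from $(\A/\mathfrak N_\Phi,\|\cdot\|_\Phi)$ into $\mathcal K_\Phi$. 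Cyclicity says that $\pi(\A_0)\xi_\Phi$ is dense in $\mathcal K_\Phi$, and pulling this back through the isometry gives the density of $\Lambda_\Phi(\A_0)$ in $\widetilde\A$.

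For (i)$\Rightarrow$(ii), we take $\mathcal K_\Phi=\widetilde\A$ with the extended $\YY$-valued (quasi) inner product, $\D_\Phi=\Lambda_\Phi(\A)$, and $\xi_\Phi=\Lambda_\Phi(\id)$. The inner product extends by continuity: $\|\Phi(a,b)\|_\YY\le k\|\Lambda_\Phi(a)\|_\Phi\|\Lambda_\Phi(b)\|_\Phi$, so $\Phi$ is jointly continuous and extends uniquely to the completion. We then define
\[
\pi(a)\Lambda_\Phi(c)=\Lambda_\Phi(ac),\qquad c\in\A_0,
\]
first on $\Lambda_\Phi(\A_0)$. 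This is well defined, since if $c\in\mathfrak N_\Phi$ then left invariance and Cauchy--Schwarz give $\Phi(ac,d)=\Phi(c,a^*d)=0$ for all $d\in\A_0$. Then $\Phi(ac,ac)=0$ by the density of $\Lambda_\Phi(\A_0)$ together with continuity of $\Phi$ in the second variable.

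We next extend $\pi(a)$ to $\D_\Phi=\Lambda_\Phi(\A)$ by $\pi(a)\Lambda_\Phi(b):=\Lambda_\Phi(ab)$ whenever this makes sense. In a quasi $*$-algebra $ab$ is only defined when one factor lies in $\A_0$, so in general we instead take $\D_\Phi=\Lambda_\Phi(\A_0)$ and then pass to the closure with respect to the graph topology $t_\pi$; this closure yields a closed representation. The symmetry relation $\ip{\pi(a)\xi}{\eta}=\ip{\xi}{\pi(a^*)\eta}$ on $\Lambda_\Phi(\A_0)$ is exactly the invariance condition (ii) in the definition of $\IA$, so $\pi(a)\in\mathcal L^\dag(\D_\Phi,\mathcal K_\Phi)$ with $\pi(a)^\dag=\pi(a^*)$. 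The identity $\pi(a)\mult\pi(c)=\pi(ac)$ follows from associativity $(ac)d=a(cd)$ together with invariance. Cyclicity of $\xi_\Phi$ is condition (i) of $\IA$, and $\Phi(a,b)=\ip{\pi(a)\xi_\Phi}{\pi(b)\xi_\Phi}$ holds first for $a,b\in\A_0$ and then for all $a,b\in\A$ by approximation, using the density in (i) and continuity.

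The main obstacle is the last step. To recover $\Phi(a,b)$ for general $a\in\A$ we need $\Lambda_\Phi(a)$ to lie in the domain of the closure and $\pi(a)\xi_\Phi=\Lambda_\Phi(a)$. For this we use (i) to pick $c_n\in\A_0$ with $\Lambda_\Phi(c_n)\to\Lambda_\Phi(a)$. We then need to show that $\pi(a)\xi_\Phi$, defined via the $\Phi$-adjoint of $\pi(a^*)$ on $\Lambda_\Phi(\A_0)$, equals $\Lambda_\Phi(a)$: by invariance, $\Phi(a,d)=\ip{\Lambda_\Phi(a)}{\Lambda_\Phi(d)}$ agrees with $\ip{\xi_\Phi}{\pi(a^*)\Lambda_\Phi(d)}$ for every $d\in\A_0$. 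The quasi-norm constant $\sqrt{k}$ only affects the estimates, not the argument, so the construction works both in the quasi-Banach case (via Proposition \ref{prop:Lp-CS}) and in the Banach case (via Theorem \ref{thm: CS o.n.} and Corollary \ref{cor: 4.7}).
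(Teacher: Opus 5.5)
Your proposal is correct and follows the same route as the paper, which simply defers to the GNS-type construction of \cite[Theorem 3.2]{BIvT1}, made available by the Cauchy--Schwarz-type inequality: $\mathcal{K}_\Phi=\widetilde{\A}$, domain $\Lambda_\Phi(\A_0)$, $\pi(a)\Lambda_\Phi(c)=\Lambda_\Phi(ac)$, cyclic vector $\xi_\Phi=\Lambda_\Phi(\id)$, then closure in the graph topology. Two corrections: the step you call the main obstacle is immediate, because $\id\in\A_0$ puts $\xi_\Phi$ in the domain and gives $\pi(a)\xi_\Phi=\Lambda_\Phi(a\id)=\Lambda_\Phi(a)$ by definition, so $\Phi(a,b)=\ip{\pi(a)\xi_\Phi}{\pi(b)\xi_\Phi}_{\mathcal{K}_\Phi}$ holds for all $a,b\in\A$ without any approximation; and in (ii)$\Rightarrow$(i) left invariance is already a hypothesis of the theorem, so only density needs proof (your derivation of invariance composes $\pi(a)\pi(c)\xi_\Phi$, which the weak product $\mult$ does not justify in general, since $\pi(c)\xi_\Phi$ need not lie in $\D_\Phi$).
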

\begin{proof}The proof proceeds along the lines of that one of \cite[Theorem 3.2]{BIvT1}, again due to Cauchy-Schwarz-like inequality for positive $\YY$ sesquilinear maps.\end{proof}

%     The following corollary gives a result of *-representability of $\YY$-valued bounded linear positive map on $(\A,\A_0)$ (see \cite[Corollary 3.13]{BIvT1}).
%\begin{cor}\label{cor: 3.12}
%	Let $(\A[\|\cdot\|],\A_0)$ be a unital normed quasi *-algebra and $\omega$ be a $\YY$-valued bounded linear positive map on $(\A,\A_0)$ ($\omega(c^*c)\geq0$, for every $c\in\A_0$). If there exists $M>0$ such that $\|\omega(d^*c)\|_\C\leq M\|c\|\|d\|$, for all $c,d\in\A_0$, then   there
%	exists a quasi $B_\YY$-space $\X_\Phi$ whose quasi norm is induced by a $\YY$-valued quasi inner product $\ip{\cdot}{\cdot}_{\X_\Phi}$, a dense subspace $\DD_\omega\subseteq\X_\Phi$ and a closed cyclic  *--representation $\ppi_\omega$ of $(\A,\A_0)$  with domain $\DD_\omega$ and   cyclic vector $\eta_\omega$,
%	such that $$ \omega(a)=\ip{{\ppi}_\omega(a)\eta_\omega}{\eta_\omega}_{\X_\Phi}, \quad \forall \ a \in \A,$$ and $$\omega(b^*ac)=\ip{\ppi_\omega(a) \Lambda_\omega(c)}{\Lambda_\omega(b)}_{\X_\Phi},\quad\forall a\in\A,\,\,\forall b,c\in\A_0.$$ 
%	 The representation is unique up to unitary equivalence.
%\end{cor}
%}

% \begin{rem}
%     In particular, if $ \Phi:\A \times \A \to\YY$ is a bounded, left-invariant completely positive sesquilinear map  and $\omega: \A \to \YY $ is a  linear map given by $\omega(a) := \Phi (a,\id) $ for all $a \in \A ,$ then $\omega$  satisfies the conditions of Proposition \ref{prop: linear map}, indeed, for all $c,d \in \A_0$ we have that 
% 	$$\omega(d^*c)=\Phi(d^*c,\id) =\Phi(c,d)$$ 
% 	and $$ \|\Phi(c,d) \|\leq \|\Phi \|\|c \|\|d \|.$$
% \end{rem}

\begin{rem}
     By the same arguments as in \cite[Corollary 3.5]{BIvT1} 
  one can show that every $\YY$-valued, bounded, positive sesquilinear map on a unital normed quasi *-algebra belongs to $\IA$.\end{rem}

\begin{rem}Once we have the representation of    $\Phi\in\IA$ with      $(\A,\A_0)$ a quasi *-algebra with unit at hand,  the linear map on $\A$ defined as $$\omega_\Phi(a)=\Phi(a,\id),\quad \forall a\in\A$$ is also representable, by slight modifications of the result known in the literature for linear functionals on a quasi *-algebra with unit, see, e.g. \cite[Theorem 2.4.8]{FT_book}.
% ; in fact,  contrary to the case of *-algebras, a positive linear map defined
% on a quasi *-algebra $(\A,\A_0)$ with unit is not automatically representable. However, the map $\omega_\Phi$ satisfies all the three conditions for the representability (see, e.g. \cite[Definition 2.4.6]{FT_book} duly modified): $\omega_\Phi(x^*x)=\Phi(x,x)\in\KK$ for all $x\in\A_0$, $$\omega_\Phi(y^*a^*x)=\Phi(y^*a^*x,\id)=\Phi(\id ,x^*ay)=\overline{ \Phi(x^*ay,\id)}=\overline{ \omega_\Phi(x^*a^*y)}, $$  for all $a\in\A,  x,y\in\A_0$ and for every $a\in\A$
% there exists $\gamma_a>0$ such that $\|\omega(a^*x)\|_\YY\leq\gamma_a\|\omega(x^*x)\|_\YY^{1/2}$, for all $x\in\A_0$ due to Cauchy-Schwarz-like inequality for positive $\YY$ sesquilinear maps. 
Similarly to the case of a positive linear map on a *-algebra $\A_0$
without unit that does not automatically extend to a (*-representable) positive linear map on the unitization of $\A_0$, the same applies when the linear map is defined on a quasi *-algebra without unit. However,  some conditions are known for  a positive linear map on a quasi-*-algebra  without unit to be extended to a positive linear map  on a quasi *-algebra with unit, see \cite{B2013,B2015}. We refer the analysis of this case to future studies.
\end{rem}

\begin{cor}\label{moreGNS alg}\cite[Corollary 3.12]{BIvT2}  Let $\A$ be a *--algebra with unit
$\id$ and let $\omega$ be a positive  linear $\YY$-valued map on $\A$. Then, there
exists a (quasi) $B_\YY$-space $\mathcal{K}_\Phi$ whose  (quasi)-norm is induced by a $\YY$-valued   (quasi)-inner product $\ip{\cdot}{\cdot}_{\mathcal{K}_\Phi}$, a dense subspace $\D_\omega\subseteq\mathcal{K}_\Phi$ and a closed cyclic *--representation $\ppi_\omega$ of $\A$ with domain $\D_\omega$, such that $$\omega(b^*ac)=\ip{\ppi_\omega(a) \Lambda_\omega(c)}{\Lambda_\omega(b)}_{\mathcal{K}_\Phi},\quad\forall a,b,c\in\A.$$ Moreover, there exists  a  cyclic vector $\eta_\omega$,
such that $$ \omega(a)=\ip{{\ppi}_\omega(a)\eta_\omega}{\eta_\omega}_{\mathcal{K}_\Phi}, \quad \forall \ a \in \A.$$ %This
The representation is unique up to unitary equivalence. 
\end{cor}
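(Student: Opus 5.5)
The plan is to reduce Corollary \ref{moreGNS alg} to the GNS-type construction of Theorem \ref{thm_rep}, applied to the sesquilinear map induced by $\omega$. We regard the unital *-algebra $\A$ as the quasi *-algebra $(\A,\A)$, so $\A_0=\A$. We then define
\[
\Phi_\omega(a,b):=\omega(b^*a),\qquad a,b\in\A .
\]
This map is sesquilinear and positive, since $\Phi_\omega(a,a)=\omega(a^*a)\in\KK$. It is left-invariant because $\Phi_\omega(ac,d)=\omega(d^*ac)=\omega((a^*d)^*c)=\Phi_\omega(c,a^*d)$. Condition (i) in the definition of $\IA$ is automatic when $\A_0=\A$: $\Lambda_{\Phi_\omega}(\A)$ is the whole quotient, hence dense in its completion. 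So $\Phi_\omega\in\IA$.

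Before invoking Theorem \ref{thm_rep}, we need the quotient construction to make sense. This requires that $\YY$ be one of the spaces for which a (generalized) Cauchy--Schwarz inequality has been established:
\begin{itemize}
\item Proposition \ref{prop:Lp-CS} for $L^p(\rho)$ with constant $2$, which yields a quasi norm;
\item Proposition \ref{prop_HilbMod} for bimodules satisfying (C1)--(C2);
\item Proposition \ref{prop: num rad norm-CS} and Corollary \ref{cor: 4.7}, with constant $1$, which yield a genuine norm.
\end{itemize}
In each case we get $\mathfrak N_{\Phi_\omega}=\{a:\omega(a^*a)=0\}$, a subspace. We also get that $\|\Lambda(a)\|=\|\omega(a^*a)\|_\YY^{1/2}$ is a (quasi) norm on $\A/\mathfrak N_{\Phi_\omega}$, and we let $\mathcal K_\Phi$ be its completion. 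We extend $\ip{\cdot}{\cdot}$ to the completion using the Cauchy--Schwarz bound, which makes the form jointly continuous.

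Next we define $\D_\omega=\Lambda(\A)$ and $\pi_\omega(a)\Lambda(c)=\Lambda(ac)$. We check three things:
\begin{itemize}
\item Well-definedness: if $c\in\mathfrak N$, then $\Phi_\omega(ac,d)=\Phi_\omega(c,a^*d)=0$ for all $d$.
\item The adjoint relation: $\ip{\pi_\omega(a)\Lambda(c)}{\Lambda(d)}=\omega(d^*ac)=\ip{\Lambda(c)}{\pi_\omega(a^*)\Lambda(d)}$.
\item The representation property: $\pi_\omega(a)\mult\pi_\omega(c)=\pi_\omega(ac)$.
\end{itemize}
With $\eta_\omega=\Lambda(\id)$ the vector is cyclic, and $\ip{\pi_\omega(a)\Lambda(c)}{\Lambda(b)}=\omega(b^*ac)$; setting $b=c=\id$ gives $\omega(a)$. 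Closedness is obtained by replacing $\pi_\omega$ with its closure, i.e.\ completing $\D_\omega$ in the graph topology $t_\pi$, exactly as in \cite[Theorem 3.2]{BIvT1}. Uniqueness up to unitary equivalence follows by mapping $\pi_\omega(a)\eta_\omega\mapsto\pi'(a)\eta'$. This map preserves the $\YY$-valued inner products, so it is isometric for the induced (quasi) norms, and it extends by density and cyclicity.

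The main obstacle is the quasi-normed case, as for $L^p(\rho)$ with $p>1$. There the triangle inequality carries the factor $\sqrt{2}$, so we must verify that completion, the extension of $\ip{\cdot}{\cdot}$, and the $\Phi$-adjointability of $\pi_\omega(a)$ on the completed domain all survive. I would handle this by using the generalized Cauchy--Schwarz inequality to obtain continuity estimates $\|\Phi(\xi,\eta)\|\le 2\|\xi\|\,\|\eta\|$. With these estimates, limits of Cauchy sequences behave as in the normed case, and the remaining steps are the same as in \cite[Corollary 3.12]{BIvT2}.
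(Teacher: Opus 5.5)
Your proposal is correct and follows essentially the route the paper intends. The corollary is quoted from \cite[Corollary 3.12]{BIvT2} without a separate proof, and it amounts to applying the GNS-type construction of Theorem \ref{thm_rep} (with $\A_0=\A$) to the left-invariant positive sesquilinear map $\Phi_\omega(a,b)=\omega(b^*a)$, with the (generalized) Cauchy--Schwarz inequality for the relevant $\YY$ supplying the (quasi) norm. Your caveat that $\YY$ must be one of the spaces where such an inequality has been established is appropriate, since the statement tacitly relies on it.
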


  \begin{rem}\label{rem_5.10}
      If $\Phi$ in Theorem \ref{thm_rep} satisfies the proper Cauchy-Schwarz inequality in the norm of $\YY$, then we obtain a representation in a proper $B_\YY$-space and  not just in a  quasi $B_\YY$-space. An analogous consideration applies to Corollary \ref{moreGNS alg}.
  \end{rem}

\begin{ex}\label{ex: 2.3} Let $\rho$ be a finite trace.  Let $W\in L^\infty(\rho)$, with $W\geq 0$. Let $k\in C([0,\|W\|]\times[0,\|W\|])$ such that $k\geq0$. Then, for each $x\in[0,\|W\|]$, the function $\eta_x:[0,\|W\|]\to\mathbb{C}$ defined by $\eta_x(t)=k(x,t)$ is a continuous, positive function on $[0,\|W\|]$. Therefore, by the functional calculus, $\eta_x(W)$ defines a positive operator in $L^\infty(\rho)$. Let us define, for every $x\in[0,\|W\|]$ and $X,Y\in L^2(\rho)$,
        \begin{equation}\label{eqn_form}\varphi(X,Y)(x)=\rho(X\eta_x(W)Y^*).\end{equation} 
        Then,  $\varphi(X,Y)\in C([0,\|W\|])$ for all $X,Y\in L^2(\rho)$. Moreover, $\varphi:L^2(\rho)\times L^2(\rho)\to C([0,\|W\|]) $ is a bounded, left-invariant, positive sesquilinear map (see \cite{BIvT1}).\\
        Let    $T\in L^4(\rho)$ and consider the map $$\widetilde{\varphi}:L^2(\rho)\times L^2(\rho)\to L^2(\rho)$$
given by $\widetilde{\varphi}(X,Y)=T(\varphi(X,Y)(W))T^*$, for all $X,Y\in L^2(\rho)$, by functional calculus it is also a bounded, left-invariant positive sesquilinear map. Now, if $A,B\in L^\infty(\rho)$ are such that $A=A^*$ and $B=B^*$, let $K:=i(AB-BA)$. Then $K=K^*$. Moreover, by some calculations it can be checked that the conditions of Proposition \ref{prop: 3.6} are satisfied in this case.\\

Let us now suppose that $T\in L^{\infty}(\rho) \cap L^{4}(\rho)$ and let

\[
\Phi : L^{2}(\rho) \times L^{2}(\rho)  \to \B(\M, (\B(\H),\|\cdot\|_{n.r.}))
\]

be given by
\[
\Phi(X,Y)(S) = T\rho\left( X\eta_x(W) S\eta_x(W) Y^* \right)(W)T^*
\]
for all $S \in \M $ and $ X,Y \in L^{2}(\rho)$,
where $\B(\H)$ satisfies that $\M \subseteq \B(\H)$. Then one can check by some direct computation that $\Phi$ is a left-invariant, positive sesquilinear map. Moreover, it can also be verified that $\Phi$ is bounded, because
\[
\bigl\| \Phi(X,Y)(S) \bigr\|_{n.r.}
   \le \|T\|_\M^2\|k\|_\infty^2 \, \|X\|_2 \, \|Y\|_2\|S\|_\M\]
    for all  $S \in \M$ and $X,Y \in L^{2}(\rho)$.
Finally, for each $X,Y \in L^2(\rho)$ and $S \in \M$,
$\Phi(X,Y)(S)\in L^2(\rho)\cap L^\infty(\rho)$ and 
\[
\tn{\Phi(X,Y)(S)}_2
   \leq\|T\|_4^2\|k\|_\infty^2 \, \|X\|_2 \, \|Y\|_2\|S\|_\M,
\]so $\Phi$ may also be  regarded as a bounded, positive, left-invariant sesquilinear map from
$L^2(\rho) \times L^2(\rho) $ into $\mathcal{L}^2(\rho)$ and
such that \(
\Phi(X,Y)(\M) \subset L^2(\rho)\cap L^\infty(\rho).
\) for all $X,Y\in L^2(\rho)$.\\
If $T\in L^{4}(\rho)\setminus \cap L^{\infty}(\rho)$, then we still have that $\Phi(x,y)(S)\in L^2(\rho)$ for all $S\in\M$ and $X,Y\in L^2(\rho) $. \end{ex}

\noindent{\bf{Acknowledgements:}} GB acknowledges that this work has been carried out within the activities of Gruppo UMI Teoria dell’Appros-simazione e Applicazioni and of GNAMPA of the INdAM. SI is supported by the Ministry of Science, Technological Development and Innovations, Republic of Serbia, grant no. 451-03-66/2024-03/200029.

\bibliographystyle{amsplain}

 \end{document}